\documentclass[11pt,a4paper]{amsart}
\usepackage[utf8]{inputenc}
\usepackage[english]{babel}
\usepackage{amsmath}
\usepackage{amsfonts}
\usepackage{amssymb}
\usepackage{mathabx}
\usepackage{graphicx}
\usepackage{xcolor}
\usepackage{float}
\usepackage{fullpage}
\usepackage{hyperref}

\newcommand{\R}{\mathbb R}
\newcommand{\E}{\mathbb E}
\newcommand{\Z}{\mathbb Z}
\newcommand{\N}{\mathbb N}
\newcommand{\J}{\mathbb J}

\newcommand{\1}{\mathbf 1}

\renewcommand{\O}{\mathcal O}

\newtheorem{thm}{Theorem}[section]
\newtheorem{lemma}[thm]{Lemma}
\newtheorem{defn}[thm]{Definition}
\newtheorem{prop}[thm]{Proposition}
\newtheorem{cor}[thm]{Corollary}

\theoremstyle{remark}
\newtheorem{rem}[thm]{Remark}

\numberwithin{equation}{section}

\makeatletter
\def\widebreve{\mathpalette\wide@breve}
\def\wide@breve#1#2{\sbox\z@{$#1#2$}%
     \mathop{\vbox{\m@th\ialign{##\crcr
\kern0.08em\brevefill#1{0.6\wd\z@}\crcr\noalign{\nointerlineskip}%
                    $\hss#1#2\hss$\crcr}}}\limits}
\def\brevefill#1#2{$\m@th\sbox\tw@{$#1($}%
  \hss\resizebox{#2}{\wd\tw@}{\rotatebox[origin=c]{90}{\upshape(}}\hss$}
\makeatletter

\author{Titus Lupu \and Christophe Sabot \and Pierre Tarrès}
\address {CNRS and LPSM, UMR 8001,
Sorbonne Université,
4 place Jussieu,
75252 Paris cedex 05,
France}
\email
{titus.lupu@upmc.fr}

\address {
Institut Camille Jordan,
Université Lyon 1, 
43 bd. du 11 nov. 1918,
69622 Villeurbanne cedex,
France}
\email
{sabot@math.univ-lyon1.fr}

\address {NYU-ECNU Institute of Mathematical Sciences at NYU Shanghai, China; Courant Institute of Mathematical Sciences, New York, USA; CNRS and Universit\'e Paris-Dauphine, PSL Research University, Ceremade, Paris, France}
\email
{tarres@nyu.edu}

\title{Inverting the Ray-Knight identity on the line}

\begin{document}

\begin{abstract}
Using a divergent Bass-Burdzy flow we construct a self-repelling 
one-dimensional diffusion.
Heuristically, it can be interpreted as a solution to an SDE with a singular drift involving a derivative of the local time.
We show that this self-repelling diffusion inverts the second Ray-Knight identity on the line. 
The proof goes through an approximation by
a self-repelling jump processes that has been previously shown by the authors to invert the Ray-Knight identity in the discrete.
\end{abstract}

\subjclass[2010]{60G15, 60J60, 60K35, 60K37(primary), and 60J55, 81T25, 81T60(secondary)}
\keywords{self-interacting diffusion, Gaussian free field, isomorphism theorems, local time}

\maketitle

\section{Introduction and presentation of results}
\label{SecIntro}

\subsection*{Ray-Knight identity on $\R$}
We will construct a continuous self-repelling one-dimensional diffusion, involved in the inversion of the Ray-Knight identity on $\R$. We start by recalling the latter.

Given $a\geq 0$, $(\phi^{(a)}(x))_{x\in\R}$ will denote a massless Gaussian free field on $\R$
conditioned to be $a$ at $x=0$, 
that is to say 
$(\phi^{(a)}(x)/\sqrt{2})_{x\geq 0}$ and 
$(\phi^{(a)}(-x)/\sqrt{2})_{x\geq 0}$ are two independent standard Brownian motions starting from $a/\sqrt{2}$.

\begin{thm}[Ray-Knight 
\cite{Ray1963Sojourn,Knight1963Sojourn,EKMRS2000RK,
RevuzYor1999BMGrundlehren,MarcusRosen2006MarkovGaussianLocTime,
Sznitman2012LectureIso}]
\label{ThmRK}
Fix $a>0$. Let $(\beta_{t})_{t\geq 0}$ be a standard Brownian motion starting from $0$ and 
let $\ell^{\beta}_{t}(x)$ be its local time process. 
Let
$\tau_{a^{2}/2}^{\beta}$ be the stopping time
\begin{displaymath}
\tau^{\beta}_{a^{2}/2}=\inf\lbrace t\geq 0\vert\ell^{\beta}_{t}(0)
>a^{2}/2\rbrace.
\end{displaymath}
Let $(\phi^{(0)}(x))_{x\in\R}$ be a massless Gaussian free field on $\R$ conditioned to be $0$ at $x=0$,
independent from the Brownian motion $\beta$.
Then the field
\begin{displaymath}
(\phi^{(0)}(x)^{2}/2+\ell^{\beta}_{\tau^{\beta}_{a^{2}/2}}(x))_{x\in\R}
\end{displaymath}
has the same law as the field
$(\phi^{(a)}(x)^{2}/2)_{x\in\R}$.
\end{thm}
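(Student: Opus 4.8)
The plan is to reduce the identity to the one-dimensional (one-sided) second Ray--Knight theorem together with the additivity property of squared Bessel processes.

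\textbf{Step 1: splitting at the origin.} By the strong Markov property of $\beta$ at $0$ and It\^o's excursion theory, the excursions of $\beta$ away from $0$ performed before the inverse local time $\tau^{\beta}_{a^{2}/2}$ form, after grouping them by sign, two independent Poisson point processes of excursions; hence the two half-line local time fields $(\ell^{\beta}_{\tau^{\beta}_{a^{2}/2}}(x))_{x\geq 0}$ and $(\ell^{\beta}_{\tau^{\beta}_{a^{2}/2}}(-x))_{x\geq 0}$ are independent, and for $x>0$ the first of them only sees the positive excursions. By the very definition of the massless free field with condition $0$ at $0$, the field $\phi^{(0)}$ also splits into two independent halves; and $\phi^{(a)}$ restricted to $[0,\infty)$ and divided by $\sqrt 2$ is a Brownian motion started from $a/\sqrt 2$, independent of the analogous object on $(-\infty,0]$. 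Since moreover $\phi^{(0)}$ is independent of $\beta$, it suffices to prove the identity for the restrictions to $[0,\infty)$.

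\textbf{Step 2: identification on the half-line.} On $[0,\infty)$ one has $\phi^{(a)}(x)^{2}/2=(\phi^{(a)}(x)/\sqrt 2)^{2}$, the square of a Brownian motion started from $a/\sqrt 2$, i.e. a squared Bessel process of dimension $1$ issued from $a^{2}/2$; likewise $\phi^{(0)}(x)^{2}/2$ is a squared Bessel process of dimension $1$ issued from $0$. On the other hand, the one-sided second Ray--Knight theorem identifies $(\ell^{\beta}_{\tau^{\beta}_{a^{2}/2}}(x))_{x\geq 0}$ as a squared Bessel process of dimension $0$ issued from $\ell^{\beta}_{\tau^{\beta}_{a^{2}/2}}(0)=a^{2}/2$ and absorbed at $0$ (absorption being consistent with the fact that the local time vanishes for $x$ beyond the running supremum of $\beta$ on $[0,\tau^{\beta}_{a^{2}/2}]$). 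By the additivity property of squared Bessel processes --- the independent sum of two squared Bessel processes of dimensions $\delta,\delta'$ started from $y,y'$ being a squared Bessel process of dimension $\delta+\delta'$ started from $y+y'$ --- the independent sum of the dimension-$0$ field from $a^{2}/2$ and the dimension-$1$ field $\phi^{(0)}(\cdot)^{2}/2$ from $0$ is a squared Bessel process of dimension $1$ issued from $a^{2}/2$, which is precisely the law of $(\phi^{(a)}(x)^{2}/2)_{x\geq 0}$. Combining the two half-lines yields the identity on $\R$.

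\textbf{Main obstacle.} If one is willing to quote the one-dimensional second Ray--Knight theorem from the cited references, the argument above is mere bookkeeping. For a self-contained treatment the technical core is the identification of $x\mapsto \ell^{\beta}_{\tau^{\beta}_{a^{2}/2}}(x)$ as a dimension-$0$ squared Bessel process: one shows this field is Markov (from the strong Markov property of $\beta$ together with the way local times at different levels accrue through excursions) and pins down its law by computing the Laplace functional $\E[\exp(-\tfrac12\int_{0}^{\infty}f(x)\,\ell^{\beta}_{\tau^{\beta}_{a^{2}/2}}(x)\,dx)]$ via It\^o's exponential formula for the excursion measure, the functional solving a linear second-order ODE and matching that of $\mathrm{BESQ}^{0}$. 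Alternatively one may bypass the additivity lemma and compute directly the joint Laplace functional of $(\phi^{(0)}(x)^{2}/2+\ell^{\beta}_{\tau^{\beta}_{a^{2}/2}}(x))_{x\in\R}$, the Gaussian part producing a $\prod(\cdot)^{-1/2}$ factor and the excursion part an exponential factor, and check that it coincides with the Laplace functional of $(\phi^{(a)}(x)^{2}/2)_{x\in\R}$.
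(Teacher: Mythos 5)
Your argument is correct and is essentially the same as the paper's own justification: the paper also reduces the statement to the classical Ray--Knight identification of $(\ell^{\beta}_{\tau^{\beta}_{a^{2}/2}}(x))_{x\geq 0}$ as a $\mathrm{BESQ}^{0}$ process started from $a^{2}/2$, combined with the additivity of squared Bessel processes ($1=1+0$, $a^{2}/2=0+a^{2}/2$), the splitting into the two independent half-lines being implicit there and made explicit in your Step 1.
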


The original formulation of Ray \cite{Ray1963Sojourn} and
Knight \cite{Knight1963Sojourn} is different. It states that
$(\ell^{\beta}_{\tau^{\beta}_{a^{2}/2}}(x))_{x\geq 0}$ is a squared Bessel process of dimension $0$, 
starting from $a^{2}/2$ at $x=0$
(see also \cite{RevuzYor1999BMGrundlehren}, Section XI.2).
$(\phi^{(0)}(x)^{2}/2)_{x\geq 0}$ is by definition a squared Bessel process of dimension $1$, and by additivity property of squared Bessel processes, 
$(\phi^{(0)}(x)^{2}/2+\ell^{\beta}_{\tau^{\beta}_{a^{2}/2}}(x))
_{x\geq 0}$
is a squared Bessel process of dimension $1=1+0$, 
starting from $a^{2}/2=0+a^{2}/2$, 
the same as $(\phi^{(a)}(x)^{2}/2)_{x\geq 0}$.
In Theorem \ref{ThmRK} we use a reformulation of the Ray-Knights identity that generalizes to a much wider setting, such as any discrete electrical network, 
and continuum setting in dimension 2 and 3 after a Wick renormalization of the square of the GFF 
\cite{EKMRS2000RK,MarcusRosen2006MarkovGaussianLocTime,
Sznitman2012LectureIso}. It also makes the connection to 
Brydges-Fröhlich-Spencer-Dynkin's isomorphism 
\cite{BFS82Loop,Dynkin1984Isomorphism, 
Dynkin1984IsomorphismPresentation} and 
Symanzik's identities in Euclidean Quantum Field Theory
\cite{Symanzik65Scalar,Symanzik66Scalar,Symanzik1969QFT}.

Theorem \ref{ThmRK} provides a way to couple on the
same probability space the triplet
$(\phi^{(0)},\beta,\phi^{(a)})$.
We formalize this in the following definition.

\begin{defn}
\label{Def RK coupl}
Fix $a>0$.
We say that the triplet $(\phi^{(0)},\beta,\phi^{(a)})$
satisfies a Ray-Knight coupling if
the following conditions are satisfied.
\begin{itemize}
\item The process $(\phi^{(0)}(x))_{x\in\R}$
is distributed like a massless Gaussian free field on $\R$
conditioned to be $0$ at $x=0$.
\item The process $(\beta_{t})_{t\geq 0}$
is a standard Brownian motion on $\R$ starting from $0$.
\item The processes $\phi^{(0)}$ and $\beta$ are independent.
\item For every $x\in \R$,
\begin{displaymath}
\phi^{(a)}(x)^{2} = \phi^{(0)}(x)^{2} 
+ 2 \ell^{\beta}_{\tau^{\beta}_{a^{2}/2}}(x).
\end{displaymath}
\item For every $x\in\R$ such that
$(\phi^{(a)})^{2}$ is strictly positive
on $[0,x]$, respectively $[x,0]$,
one has $\phi^{(a)}(x)>0$.
For all other $x\in\R$, 
$\phi^{(a)}(x)= \phi^{(0)}(x)$.
\end{itemize}
\end{defn}

It follows from Theorem \ref{ThmRK} that
$\phi^{(a)}$ in a Ray-Knight coupling is distributed like
a massless Gaussian free field on $\R$
conditioned to be $a$ at $x=0$.

\subsection*{Inversion of the Ray-Knight identity}
Given a Ray-Knight coupling of $(\phi^{(0)},\beta,\phi^{(a)})$,
we are interested in the conditional law of
the stochastic process
$\big(\beta_{\tau^{\beta}_{a^{2}/2}-t}\big)_{t}$
knowing $\phi^{(a)}$.

The Ray-Knight identity of Theorem \ref{ThmRK} generalizes to discrete electrical networks and symmetric Markov jump processes on them
\cite{EKMRS2000RK,MarcusRosen2006MarkovGaussianLocTime,
Sznitman2012LectureIso}. This is known as second generalized Ray-Knight identity. The inversion in the discrete setting was done in
\cite{ST2016InvRK,LST2017InvRK}. 
This inversion involves a nearest neighbor self-repelling jump process on the network. 
More precisely,
the jump rate at time $t$ from a vertex $x_{1}$ to a neighbor $x_{2}$ is given by
\begin{equation}
\label{EqIntRate}
C(x_{1},x_{2})\dfrac{(\Phi(x_{2})^{2}-2L_{t}(x_{2}))^{\frac{1}{2}}}
{(\Phi(x_{1})^{2}-2L_{t}(x_{1}))^{\frac{1}{2}}},
\end{equation}
where $C(x_{1},x_{2})$ is a fixed conductance,
$L_{t}(x_{j})$ is the time spent at $x_{j}$ by the jump process before time $t$, and $\Phi$ is a field on the vertices, considered as an initial condition. In the inversion of Ray-Knight $\Phi$ is random, distributed as a discrete Gaussian free field. In Section \ref{SecDiscr} we detail this in the setting of a discrete subset of $\R$.
Also note that this self-repelling jump process
with jump rates \eqref{EqIntRate} is up to a time change
the vertex-diminished jump process (VDJP)
studied in \cite{ST2016InvRK,BauerschmidtHelmuthSwan2}.

If one takes a one-dimensional fine mesh lattice and renormalizes the jump rates \eqref{EqIntRate}, then on a purely formal level, without dealing with the convergence or the meaning of the terms involved, one gets the following equation for a continuous self-repelling diffusion:
\begin{equation}
\label{MainEq}
d\widecheck{X}_{t}=
dW_{t}+
``\dfrac{1}{2}\partial_{x}\log(\check{\lambda}_{t}(x))\Big
\vert_{x=\widecheck{X}_{t}} dt",
\qquad
\check{\lambda}_{t}(x)=\check{\lambda}_{0}(x)-
2\check{\ell}_{t}(x).
\end{equation}
There $\widecheck{X}_{t}$ is a continuous stochastic process on an interval $I$, $\check{\lambda}_{0}$ a continuous function from $I$ to
$(0,+\infty)$, $W_{t}$ is a standard Brownian motion and 
$\check{\ell}_{t}(x)$ is the local time process of $\widecheck{X}_{t}$. 
We will call $\check{\lambda}_{t}$ the
\textit{occupation profile at time} $t$. 
Our process $\widecheck{X}_{t}$ is defined up to a finite time
\begin{displaymath}
\widecheck{T}=\sup\lbrace t\geq 0\vert 
\check{\lambda}_{t}(\widecheck{X}_{t})>0\rbrace.
\end{displaymath}
We will also assume that
\begin{equation}
\label{EqCond}
\int_{\inf I} \check{\lambda}_{0}(x)^{-1} dx = +\infty,
\qquad
\int^{\sup I} \check{\lambda}_{0}(x)^{-1} dx = +\infty
\end{equation}
and say that $\check{\lambda}_{0}$ is \textit{admissible}.
This is a condition for not reaching the boundary of $I$ in finite time.
$\widecheck{X}_{t}$ is a self-repelling process that tends to avoid places it has visited a lot, yet we will see that a.s. it will eventually exhaust the occupation profile at some location in finite time 
$\widecheck{T}$.
As we will further see, this self-repelling process appears in the inversion of the Ray-Knight identity in the continuous one-dimensional setting.

The equation \eqref{MainEq} is not a classical SDE.
It is not immediately clear how to make sense of the drift term
$\dfrac{1}{2}\partial_{x}\log(\check{\lambda}_{t}(x))\Big
\vert_{x=\widecheck{X}_{t}} dt$, as $x\mapsto\check{\ell}_{t}(x)$ will not be differentiable for $t>0$, and moreover there will not be a change of scale under which it will be differentiable for all $t>0$. 
So the problem is not only to solve \eqref{MainEq} by an approximation scheme, the problem is already to give an appropriate meaning to being a solution to \eqref{MainEq}. The equation \eqref{MainEq} is also somewhat misleading, as we believe that a solution $\widecheck{X}_{t}$ would not be a semi-martingale, admitting an adapted decomposition into a Brownian motion plus a drift term with zero quadratic variation, but with an infinite total variation. 
See \cite{HuWarren00BBFlow,LST2018LRM} for a discussion on this point.

However, it turns out that the equation
\eqref{MainEq} is in some sense exactly solvable, and in this paper we will give the explicit solution 
which involves a divergent bifurcating stochastic
flow of diffeomorphisms of $\R$ introduced by Bass and Burdzy in
\cite{BassBurdzy99StochBiff}. Our construction here is similar to that of 
\cite{LST2018LRM}, where we introduced a reinforced diffusion constructed out of a different, convergent, Bass-Burdzy flow.

\subsection*{Heuristic reduction to a Bass-Burdzy flow}

Next we explain a non-rigorous heuristic derivation of an explicit solution to
\eqref{MainEq}. A similar heuristic appears in the introduction to
\cite{LST2018LRM}.

Assume that for $t_{0}>0$, $\bar{X}^{(t_{0})}_{t}$ is a continuous process coinciding with $\widecheck{X}_{t}$ on 
$[0,t_{0}]$, and after time $t_{0}$ continues as a Markovian diffusion
with infinitesimal generator
\begin{displaymath}
\dfrac{1}{2}\dfrac{d^{2}}{dx^{2}}+
\dfrac{1}{2}\partial_{x}\check{\lambda}_{t_{0}}
\dfrac{d}{dx}.
\end{displaymath}
In other words, there is no additional self-repulsion after time
$t_{0}$.
Then after time $t_{0}$, $\bar{X}^{(t_{0})}_{t}$ is a scale and time changed Brownian motion. 
Given $\bar{S}_{t_{0}}$ an anti-derivative of
$\check{\lambda}_{t_{0}}^{-1}$,
$(\bar{S}_{t_{0}}(\bar{X}^{(t_{0})}_{t}))_{t\geq t_{0}}$ is a local martingale. By further performing the time change
\begin{displaymath}
du=\check{\lambda}_{t_{0}}(\bar{X}^{(t_{0})}_{t})^{-2} dt
\end{displaymath}
we get a standard Brownian motion.

Then it is reasonable to assume that near time $t_{0}$, 
$\widecheck{X}_{t}$ is close to $\bar{X}^{(t_{0})}_{t}$. The idea is to let the change of scale depend on time. 
Assume there is a flow of changes of scales 
$\widecheck{S}_{t}: I\rightarrow\R$, 
such that $\widecheck{S}_{t}$ is an anti-derivative 
of $\check{\lambda}_{t}^{-1}$,
and such that $\widecheck{S}_{t}(\widecheck{X}_{t})$ is a local martingale. 
Consider $u(t)$ the time change given by
\begin{displaymath}
du=\check{\lambda}_{t}(\widecheck{X}_{t})^{-2} dt,
\end{displaymath}
and $t(u)$ the inverse time change.
Assume that,
by analogy with the Markovian case, 
$\widecheck{S}_{t(u)}(\widecheck{X}_{t(u)})_{u\geq 0}$ is a standard Brownian motion $(B_{u})_{u\geq 0}$. 
Let $x_{1}<x_{2}\in I$. Then
\begin{eqnarray}
\nonumber
\dfrac{d}{du}(\widecheck{S}_{t(u)}(x_{2})
-\widecheck{S}_{t(u)}(x_{1}))
&=&\dfrac{dt}{du}
\dfrac{d}{dt}\int_{x_{1}}^{x_{2}}
\check{\lambda}_{t}(x)^{-1} dx\\
\nonumber
&=&\check{\lambda}_{t}(\widecheck{X}_{t})^{2}
\dfrac{d}{dt}
\int_{0}^{t}\1_{x_{1}<\widecheck{X}_{s}<x_{2}}
2\check{\lambda}_{s}(\widecheck{X}_{s})^{-2}ds\\
\label{EqHeuristic}
&=& 2 \check{\lambda}_{t}(\widecheck{X}_{t})^{2}
\check{\lambda}_{t}(\widecheck{X}_{t})^{-2}
\1_{x_{1}<\widecheck{X}_{t}<x_{2}}\\
\nonumber
&=&
2\1_{x_{1}<\widecheck{X}_{t}<x_{2}}\\
\nonumber
&=&
2\1_{\widecheck{S}_{t(u)}(x_{1})<B_{u}<
\widecheck{S}_{t(u)}(x_{2})}.
\end{eqnarray}
This implies that
$\frac{d}{du}\widecheck{S}_{t(u)}(x)$
is of form
\begin{displaymath}
\dfrac{d\widecheck{S}_{t(u)}(x)}{du}=
\1_{\widecheck{S}_{t(u)}(x)>B_{u}}-
\1_{\widecheck{S}_{t(u)}(x)<B_{u}}
+ f(u),
\end{displaymath}
for some function $f(u)$ not depending on $x\in I$.
Further, it is reasonable to assume that the left and the right sides of
$\widecheck{X}_{t}$ play symmetric roles,
and thus $f(u)\equiv 0$.
Then, we get that
\begin{displaymath}
\forall x\in I, \dfrac{d\widecheck{S}_{t(u)}(x)}{du}=
\1_{\widecheck{S}_{t(u)}(x)>B_{u}}-
\1_{\widecheck{S}_{t(u)}(x)<B_{u}}.
\end{displaymath}
This is an equation studied by Bass and Burdzy in
\cite{BassBurdzy99StochBiff}. In the sequel we will construct
$\widecheck{X}_{t}$ out of the flow of solutions to the above equation.

Note that if in the equation \eqref{MainEq}, one replaced the
$\dfrac{1}{2}$ in front of 
$\dfrac{1}{2}\partial_{x}\log(\check{\lambda}_{t}(x))\Big
\vert_{x=\widecheck{X}_{t}} dt$ by a different positive constant, one would not get an as simple explicit solution. Indeed, the cancellation of powers of $\check{\lambda}_{t}(\widecheck{X}_{t})$ as in
\eqref{EqHeuristic} would not occur.

\subsection*{Construction of a self-repelling diffusion out of a divergent Bass-Burdzy flow}

The divergent Bass-Burdzy flow is given by the differential equation
\begin{equation}
\label{EqBB}
\dfrac{dY_{u}}{du}=
\left\lbrace
\begin{array}{ll}
1 & \text{if}~Y_{u}>B_{u}, \\ 
-1 & \text{if}~Y_{u}<B_{u},
\end{array}
\right. 
\end{equation}
where $B_{u}$ is a standard Brownian motion starting from $0$. 
The behavior at times when $Y_{u}=B_{u}$ is not specified. It is shown in \cite{BassBurdzy99StochBiff} that given an initial condition, there is a.s. a unique solution defined for all positive times that is Lipschitz continuous. Moreover, these Lipschitz continuous solutions form a flow of  increasing $\mathcal{C}^{1}$ diffeomorphisms of $\R$, 
$(\widecheck{\Psi}_{u})_{u\geq 0}$. For the properties of this flow, we refer to
\cite{BassBurdzy99StochBiff,HuWarren00BBFlow,Attanasio10DiscDrift}.

Define
\begin{displaymath}
\check{\xi}_{u}=(\widecheck{\Psi}_{u})^{-1}(B_{u}).
\end{displaymath}
$(\widecheck{\Psi}_{u})_{u\geq 0}$ satisfies a bifurcation property
\cite{BassBurdzy99StochBiff}: there is a finite random value
$y_{\rm bif}\in\R$, such that for $y>y_{\rm bif}$, 
$\widecheck{\Psi}_{u}(y)>B_{u}$ for $u$ large enough, and
$\lim_{+\infty}\widecheck{\Psi}_{u}(y)=+\infty$,
for $y<y_{\rm bif}$, $\widecheck{\Psi}_{u}(y)<B_{u}$ 
for $u$ large enough and
$\lim_{+\infty}\widecheck{\Psi}_{u}(y)=-\infty$, and
$\lbrace u\geq 0\vert\widecheck{\Psi}_{u}(y_{\rm bif})=B_{u}\rbrace$
is unbounded. Moreover,
\begin{displaymath}
y_{\rm bif} = \lim_{u\to +\infty}\check{\xi}_{u}.
\end{displaymath}
The process $(\check{\xi}_{u})_{u\geq 0}$ admits
\cite{BassBurdzy99StochBiff,HuWarren00BBFlow} a family of local times
$\widecheck{\Lambda}_{u}(y)$ continuous in $(y,u)$, such that for any $f$ bounded Borel measurable function on $\R$ and $u\geq 0$,
\begin{displaymath}
\int_{0}^{u} f(\check{\xi}_{v}) dv=
\int_{\R} f(y) \widecheck{\Lambda}_{u}(y) dy.
\end{displaymath}
Moreover, these local times are related to the spatial derivative of the flow as follows:
\begin{displaymath}
\dfrac{\partial}{\partial y}\widecheck{\Psi}_{u}(y)=
1+2\widecheck{\Lambda}_{u}(y).
\end{displaymath}
For all $u\geq 0$, $\widecheck{\Lambda}_{u}(y)$,
$\dfrac{\partial}{\partial y}\widecheck{\Psi}_{u}(y)$ and
$\dfrac{\partial}{\partial y}(\widecheck{\Psi}_{u})^{-1}(y)$ are 
locally $1/2-\varepsilon$ Hölder continuous in $y$.

Next we give the construction of $\widecheck{X}_{t}$ out of the flow
$(\widecheck{\Psi}_{u})_{u\geq 0}$.

\begin{defn}
\label{DefMain}
Let $x_{0}\in I$.
Let be the change of scale
\begin{displaymath}
\widecheck{S}_{0}(x)=\int_{x_{0}}^{x}\check{\lambda}_{0}(r)^{-1} dr, ~~x\in I,
\end{displaymath}
and $\widecheck{S}_{0}^{-1}$ the inverse change of scale. Consider the change of time $t(u)$ from $u$ to $t$ (and
$u(t)$ the inverse time change) given by
\begin{equation}
\label{EqChgTime}
dt = \check{\lambda}_{0}(\widecheck{S}_{0}^{-1}(\check{\xi}_{u}))^{2}
(1+2\widecheck{\Lambda}_{u}(\check{\xi}_{u}))^{-2}du.
\end{equation}
Let
\begin{displaymath}
\widecheck{T}=\int_{0}^{+\infty}
\check{\lambda}_{0}(\widecheck{S}_{0}^{-1}(\check{\xi}_{u}))^{2}
(1+2\widecheck{\Lambda}_{u}(\check{\xi}_{u}))^{-2}du.
\end{displaymath}
Set 
$\widecheck{X}_{t}=\widecheck{S}_{0}^{-1}(\check{\xi}_{u(t)})$,
for $t\in[0,\widecheck{T})$.
\end{defn}

We will call $(B_{u})_{u\geq 0}$ the \textit{driving Brownian motion} of $\widecheck{X}_{t}$.

Note that 
\begin{eqnarray*}
\widecheck{T}&=&
\int_{0}^{+\infty}
\check{\lambda}_{0}(\widecheck{S}_{0}^{-1}(\check{\xi}_{u}))^{2}
(1+2\widecheck{\Lambda}_{u}(\check{\xi}_{u}))^{-2}du
=\int_{\R}\int_{0}^{+\infty}
\check{\lambda}_{0}(\widecheck{S}_{0}^{-1}(y))^{2}
(1+2\widecheck{\Lambda}_{u}(y))^{-2} d_{u}\widecheck{\Lambda}_{u}(y) dy\\
&=&
\dfrac{1}{2}\int_{\R}\check{\lambda}_{0}(\widecheck{S}_{0}^{-1}(y))^{2}
(1-(1+2\widecheck{\Lambda}_{+\infty}(y))^{-1})dy
\\
&\leq&\dfrac{1}{2}(\sup_{u\geq 0}\check{\xi}_{u}-
\inf_{u\geq 0}\check{\xi}_{u})
\sup_{[\widecheck{S}_{0}^{-1}(\inf_{u\geq 0}\check{\xi}_{u}),
\widecheck{S}_{0}^{-1}(\sup_{u\geq 0}\check{\xi}_{u})]}
\check{\lambda}_{0}^{2}.
\end{eqnarray*}
Since $\check{\xi}_{u}$ converges at $+\infty$ and thus has a bounded range, $\widecheck{T}< +\infty$ a.s. The process $\widecheck{X}_{t}$ has local times
\begin{displaymath}
\check{\ell}_{t}(x)=
\check{\lambda}_{0}(x)
(1-(1+2\widecheck{\Lambda}_{u(t)}(\widecheck{S}_{0}(x)))^{-1}).
\end{displaymath}
Indeed, for $f$ a measurable bounded function on $I$,
\begin{eqnarray*}
\int_{0}^{t_{1}}f(\widecheck{X}_{t}) dt&=&
\int_{0}^{t_{1}}f(\widecheck{S}_{0}^{-1}(\check{\xi}_{u(t)}))dt=
\int_{0}^{u(t_{1})}f(\widecheck{S}_{0}^{-1}(\check{\xi}_{u}))
\check{\lambda}_{0}(\widecheck{S}_{0}^{-1}(\check{\xi}_{u}))^{2}
(1+2\widecheck{\Lambda}_{u}(\check{\xi}_{u}))^{-2}
du
\\
&=&\int_{\R}\int_{0}^{u(t_{1})}
f(\widecheck{S}_{0}^{-1}(y))
\check{\lambda}_{0}(\widecheck{S}_{0}^{-1}(y))^{2}
(1+2\widecheck{\Lambda}_{u}(y))^{-2}
d_{u}\widecheck{\Lambda}_{u}(y) dy\\
&=&\dfrac{1}{2}\int_{\R}
f(\widecheck{S}_{0}^{-1}(y))
\check{\lambda}_{0}(\widecheck{S}_{0}^{-1}(y))^{2}
(1-(1+2\widecheck{\Lambda}_{u(t_{1})}(y))^{-1}) dy\\
&=&\dfrac{1}{2}\int_{I}
f(x)\check{\lambda}_{0}(x)
(1-(1+2\widecheck{\Lambda}_{u(t_{1})}(\widecheck{S}_{0}(x)))^{-1}) dx.
\end{eqnarray*}
Set 
\begin{displaymath}
\check{\lambda}_{t}(x)=\check{\lambda}_{0}-2\check{\ell}_{t}(x)=
\check{\lambda}_{0}(x)(1+2\widecheck{\Lambda}_{u(t)}(\widecheck{S}_{0}(x)))^{-1}.
\end{displaymath}
\textit{A posteriori}, the change of time \eqref{EqChgTime} is
\begin{displaymath}
dt=\check{\lambda}_{t}(\widecheck{X}_{t})^{2} du.
\end{displaymath}
We see that for all $t\in [0,\widecheck{T})$ and
$x\in I$, $\check{\lambda}_{t}(x)>0$. 
Note that $\widecheck{X}_{\widecheck{T}}=(\widecheck{S}_{0})^{-1}(y_{\rm bif})$.
Moreover,
\begin{displaymath}
\lim_{t\to \widecheck{T}}
\check{\lambda}_{t}(\widecheck{X}_{t})=
\lim_{u\to +\infty}\check{\lambda}_{0}(y_{\rm bif})
(1+2\widecheck{\Lambda}_{u}(y_{\rm bif}))^{-1}=0,
\end{displaymath}
as $\lim_{u\to +\infty}\widecheck{\Lambda}_{u}(y_{\rm bif})=+\infty$ 
(see Section 4 in \cite{HuWarren00BBFlow}). 

Also note that if one sets
$\widecheck{S}_{t} = \widecheck{\Psi}_{u(t)}\circ\widecheck{S}_{0}$,
then
$\widecheck{S}_{t}(\widecheck{X}_{t}) = B_{u(t)}$,
and
$(\widecheck{S}_{t\wedge\widecheck{T}}(\widecheck{X}_{t\wedge\widecheck{T}}))_{t\geq 0}$
is a local martingale.
Moreover,
\begin{displaymath}
\dfrac{\partial }{\partial x}\widecheck{S}_{t}(x)
=\check{\lambda}_{0}(x)^{-1}
(1+2\widecheck{\Lambda}_{u(t)}(\widecheck{S}_{0}(x)))
= \check{\lambda}_{t}(x)^{-1}.
\end{displaymath}
So, $\widecheck{S}_{t}$ is a time-dependent change of scale
indeed satisfying the properties postulated previously
in our heuristic.

\subsection*{Statement of the results}

Now, let us see why $(\widecheck{X}_{t},\check{\lambda}_{t})$ can be interpreted as solution to the equation \eqref{MainEq}, with initial condition $(x_{0},\check{\lambda}_{0})$. We will give an explanation in terms of discrete approximations.
Let $\J^{(n)}=2^{-n}\mathbb{Z}\cap I$. Let $\widecheck{X}^{(n)}_{t}$ be a continuous time discrete space self-interacting nearest neighbor jump process on $\J^{(n)}$, defined by the jumps rates from $x$ to
$x+\sigma 2^{-n}$, $\sigma \in\lbrace -1,1\rbrace$, at time $t$, equal to
\begin{equation}
\label{EqRate}
2^{2n-1}\dfrac{\check{\lambda}_{t}^{(n)}(x+\sigma 2^{-n})^{\frac{1}{2}}}
{\check{\lambda}_{t}^{(n)}(x)^{\frac{1}{2}}},
\end{equation}
where
\begin{displaymath}
\check{\lambda}_{t}^{(n)}(x)=
\check{\lambda}_{0}(x)-2
\check{\ell}_{t}^{(n)}(x),
\qquad
\check{\ell}_{t}^{(n)}(x)=2^{n}
\int_{0}^{t}\1_{\widecheck{X}^{(n)}_{s}=x} ds.
\end{displaymath}
Let
\begin{displaymath}
\widecheck{T}^{(n)}_{\varepsilon}=\sup\lbrace t\geq 0\vert 
\check{\lambda}_{t}^{(n)}(\widecheck{X}^{(n)}_{t})>\varepsilon\rbrace,
\varepsilon>0,
~~
t^{(n)}_{\partial\J^{(n)}}
=\inf\lbrace t\geq 0\vert 
\widecheck{X}^{(n)}_{t}\in\lbrace\min\J^{(n)},
\max \J^{(n)}\rbrace\rbrace.
\end{displaymath}
We introduce the stopping time 
$t^{(n)}_{\partial\J^{(n)}}$
to avoid considering what happens after
$\widecheck{X}^{(n)}_{t}$ hits the boundary of
the domain $\J^{(n)}$.

If there were no self-interaction, that is to say in \eqref{EqRate}
$\check{\lambda}_{t}^{(n)}$ were replaced by
$\check{\lambda}_{0}$, the process would converge in law as $n\to +\infty$ to  a solution of the SDE
\begin{displaymath}
d X_{t}= d W_{t} + 
\dfrac{1}{2}\partial_{x}\log(\check{\lambda}_{0}(x))
\Big\vert_{x= X_{t}} dt.
\end{displaymath}
In our case with self-interaction, we have the following:

\begin{thm}
\label{ThmSIConv}
With the notations above, for all $\varepsilon>0$ 
the family of process
$$(\widecheck{T}^{(n)}_{\varepsilon}\wedge 
t^{(n)}_{\partial\J^{(n)}},
\widecheck{X}^{(n)}_{t\wedge\widecheck{T}^{(n)}_{\varepsilon}
\wedge 
t^{(n)}_{\partial\J^{(n)}}},
\check{\lambda}_{t\wedge
\widecheck{T}^{(n)}_{\varepsilon}
\wedge 
t^{(n)}_{\partial\J^{(n)}}
}^{(n)}(x))_{x\in \J^{(n)}, t\geq 0}$$ converges in law as 
$n\to +\infty$ to
$$(\widecheck{T}_{\varepsilon},\widecheck{X}_{t\wedge\widecheck{T}_{\varepsilon}},
\check{\lambda}_{t\wedge\widecheck{T}_{\varepsilon}}~(x))
_{x\in I, t\geq 0},$$
where $\widecheck{X}_{t}$ is given by Definition \ref{DefMain}
and
\begin{displaymath}
\widecheck{T}_{\varepsilon}=\sup\lbrace t\geq 0\vert 
\check{\lambda}_{t}(\widecheck{X}_{t})>\varepsilon\rbrace,
\end{displaymath}
provided that $\widecheck{X}^{(n)}_{0}$ converges to
$\widecheck{X}_{0}$.
In particular, 
\begin{displaymath}
t^{(n)}_{\partial\J^{(n)}}>\widecheck{T}^{(n)}_{\varepsilon}
\end{displaymath}
with probability converging to $1$.
The convergence in law is for the topology of uniform convergence on compact subsets of $I\times [0,+\infty)$. The spatial processes on 
$\J^{(n)}$ are considered to be linearly interpolated outside 
$\J^{(n)}$.
\end{thm}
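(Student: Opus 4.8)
The plan is to prove convergence of the discrete self-interacting jump processes to the continuous object by the same strategy used in \cite{LST2018LRM}: push the discrete process through a discrete analogue of the scale-and-time change of Definition \ref{DefMain}, recognize that in the new coordinates the process becomes (a discretization of) the process $\check\xi$ driven by a Bass--Burdzy-type ODE, invoke a convergence result for the discrete Bass--Burdzy flows, and then transport the convergence back through the (continuous, locally uniformly convergent) change of scale and time. Concretely, I would first record the discrete scale function $\widecheck S_0^{(n)}(x)=\sum 2^{-n}\check\lambda_0(\cdot)^{-1}$ on $\J^{(n)}$, check that it converges uniformly on compacts to $\widecheck S_0$, and set $Z^{(n)}_t=\widecheck S_0^{(n)}(\widecheck X^{(n)}_t)$. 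Using the jump rates \eqref{EqRate} one computes that, written in the $Z^{(n)}$ coordinate and after the random time change $du=(\check\lambda^{(n)}_t(\widecheck X^{(n)}_t))^2\,dt$, the process $Z^{(n)}$ is a nearest-neighbour random walk whose drift at a site $y$ is, up to $O(2^{-n})$ corrections, $+1$ when $y$ lies above the running value $B^{(n)}_u$ of a rescaled walk and $-1$ when below, with local-time-type corrections governing the spatial derivative of the scale flow exactly as in the identity $\partial_y\widecheck\Psi_u(y)=1+2\widecheck\Lambda_u(y)$. This is the discrete shadow of the heuristic \eqref{EqHeuristic}.

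The core of the argument is then a stability/convergence statement for the discretized divergent Bass--Burdzy flow: the pair consisting of the driving walk and the flow $(\widecheck\Psi^{(n)}_u,\widecheck\Lambda^{(n)}_u)$ converges in law, uniformly on compact subsets of $\R\times[0,U]$ for each fixed $U$, to $(\widecheck\Psi_u,\widecheck\Lambda_u)$, together with the associated $\check\xi^{(n)}_u\to\check\xi_u$. I would obtain this by a tightness-plus-identification argument: tightness of the walks is classical (Donsker), tightness of the flows follows from the uniform Lipschitz bound (slope between $-1$ and $1$ after scaling) and the $1/2-\eps$ Hölder control on $\widecheck\Lambda$ quoted from \cite{BassBurdzy99StochBiff,HuWarren00BBFlow}, and any subsequential limit is identified as the unique Bass--Burdzy flow by passing to the limit in the defining ODE \eqref{EqBB} and using the a.s. uniqueness of Lipschitz solutions. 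The local times pass to the limit because they are continuous functionals of the trajectory and occupation measures converge.

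Having the flow convergence, I would transfer it: because $\check\lambda_0$ is continuous and positive on compacts, the maps $(y,u)\mapsto \check\lambda_0(\widecheck S_0^{-1}(\check\xi_u))^2(1+2\widecheck\Lambda_u(\check\xi_u))^{-2}$ are continuous, so the discrete time changes $t^{(n)}(u)$ converge to $t(u)$ uniformly on compacts, hence $\widecheck X^{(n)}_t=(\widecheck S_0^{(n)})^{-1}(\check\xi^{(n)}_{u(t)})\to\widecheck X_t$ and $\check\lambda^{(n)}_t\to\check\lambda_t$ uniformly on compact subsets of $I\times[0,+\infty)$ by continuous-mapping. The cutoff at $\widecheck T^{(n)}_\eps$ is handled by noting $\check\lambda_t(\widecheck X_t)=\check\lambda_0(y_{\rm bif})(1+2\widecheck\Lambda_{u(t)}(y_{\rm bif}))^{-1}$ is strictly decreasing to $0$, so $\widecheck T_\eps<\widecheck T<\infty$ is a continuity point of the map $u\mapsto \check\lambda$ along which the level $\eps$ is crossed transversally, giving $\widecheck T^{(n)}_\eps\to\widecheck T_\eps$; the same monotonicity shows that on the event $\{t\le\widecheck T^{(n)}_\eps\}$ the occupation profile stays bounded below by $\eps$, so the process has not exhausted it and cannot have escaped to $\min\J^{(n)}$ or $\max\J^{(n)}$ — the latter because admissibility \eqref{EqCond} forces $\check\xi$ to have bounded range and $\widecheck S_0^{-1}$ of that range is a compact subset of the interior of $I$, so $t^{(n)}_{\min\J^{(n)},\max\J^{(n)}}>\widecheck T^{(n)}_\eps$ with probability tending to $1$.

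The main obstacle I expect is the flow-convergence step, specifically identifying subsequential limits: the ODE \eqref{EqBB} is genuinely discontinuous in its argument at the zero set $\{Y_u=B_u\}$, so one must rule out that a limit flow spends positive time "stuck" on the diagonal in a way not dictated by the Bass--Burdzy equation, and one must control the local times $\widecheck\Lambda^{(n)}$ near that zero set well enough that the relation $\partial_y\widecheck\Psi=1+2\widecheck\Lambda$ survives the limit uniformly — this is where the Hölder estimates and the occupation-measure convergence must be combined carefully, and where I would lean most heavily on the corresponding analysis in \cite{LST2018LRM} adapted to the divergent flow of \cite{BassBurdzy99StochBiff,HuWarren00BBFlow}.
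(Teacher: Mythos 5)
Your route is the direct one: discretize the scale-and-time change, show the rescaled walk solves a discrete version of \eqref{EqBB}, and pass to the limit in the flow. This is genuinely different from what the paper does, and it founders exactly at the step the paper flags in its introduction as the reason for \emph{not} arguing this way. To identify a subsequential limit of your discrete flows as the divergent Bass--Burdzy flow, the drift of $\widecheck{\Psi}^{(n)}_{u}(y)$ away from the driving walk must converge to $\pm 1$; as in the paper's Lemma \ref{LemClaim6}, that drift is of the form
$\pm 2\check{\lambda}^{(n)}_{t}(x\pm 2^{-n})^{-\frac{1}{2}}\big(\check{\lambda}^{(n)}_{t}(x-2^{-n})^{-\frac{1}{2}}+\check{\lambda}^{(n)}_{t}(x+2^{-n})^{-\frac{1}{2}}\big)^{-1}$,
so you need the ratio $\check{\lambda}^{(n)}_{t\wedge\widecheck{T}^{(n)}_{\varepsilon}}(x+2^{-n})/\check{\lambda}^{(n)}_{t\wedge\widecheck{T}^{(n)}_{\varepsilon}}(x-2^{-n})\to 1$ uniformly in $(x,t)$ on compacts. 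Since $\check{\lambda}^{(n)}_{t}$ involves the local times of the self-interacting walk itself, this amounts to a uniform-in-$n$ spatial modulus of continuity for those local times, which is a nontrivial estimate about the self-repelling process that your proposal does not supply. The Hölder bounds on $\widecheck{\Lambda}_{u}$ you invoke from \cite{BassBurdzy99StochBiff,HuWarren00BBFlow} concern the \emph{limiting} continuum flow, not the discrete approximations, so they cannot be quoted to control $\widecheck{\Lambda}^{(n)}$ or to identify the limit; your "up to $O(2^{-n})$ corrections" claim for the drift presupposes exactly the missing estimate.

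The paper circumvents this by a two-stage argument. First it treats the random initial profile $\check{\lambda}^{\ast}_{0}=(\phi^{(a)})^{2}$: there the discrete self-repelling walks are realized, via the discrete Ray--Knight inversion of \cite{LST2017InvRK} (Corollary \ref{PropLST1}) and Lemma \ref{LemEmb}, inside the Brownian motion $\beta$, so the discrete occupation profiles are written in terms of Brownian local times; joint continuity of those local times gives the uniform ratio convergence almost surely (this is the displayed computation closing the proof of Lemma \ref{LemClaim7}), and then the martingale CLT plus the uniqueness of Lipschitz solutions to \eqref{EqBB} identifies the limit flow, yielding Proposition \ref{PropDiscrBB} and Theorem \ref{ThmRKInv}. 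Second, for a general admissible $\check{\lambda}_{0}$, Theorem \ref{ThmSIConv} is \emph{not} proved by a new flow-convergence argument: Proposition \ref{PropEmb2} embeds the discrete walk on $\J^{(n)}$ with initial profile $\check{\lambda}_{0}\vert_{\J^{(n)}}$ into the continuous self-repelling diffusion whose initial profile is $(\varphi^{(n)})^{2}$, where $\varphi^{(n)}$ is a GFF conditioned to interpolate $\check{\lambda}_{0}^{1/2}$ on $\J^{(n)}$, and then Lemmas \ref{LemC1}--\ref{LemC3} conclude by showing $\varphi^{(n)}\to\check{\lambda}_{0}^{1/2}$ and using continuity of the diffusion in its initial occupation profile (Lemma \ref{LemContinuity}). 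If you want to keep your direct strategy, you must either prove the uniform closeness of $\check{\lambda}^{(n)}_{t}$ at neighbouring sites for a general deterministic profile, or restructure the proof along the paper's reduction to the squared-GFF case.
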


Next we state how our self-repelling diffusion is related to the inversion of the Ray-Knight identity of Theorem
\ref{ThmRK}.

\begin{thm}
\label{ThmRKInv}
Let $a>0$ and $(\phi^{(a)}(x))_{x\in\R}$ be a massless Gaussian free field on $\R$ conditioned to be $a$ at $x=0$. 
Let $I(\phi^{(a)})$ be the connected component of $0$ in 
$\lbrace x\in\R\vert \phi^{(a)}(x)>0\rbrace$. For $x\in I(\phi^{(a)})$, set
$\check{\lambda}_{0}^{\ast}(x)=\phi^{(a)}(x)^{2}$. Then
a.s. $\check{\lambda}^{\ast}_{0}$ satisfies the condition 
\eqref{EqCond}. Let
$(\widecheck{X}^{\ast}_{t},\check{\lambda}^{\ast}_{t}(x))
_{x\in I(\phi^{(a)}), 0\leq t\leq \widecheck{T}^{\ast}}$ be the process, distributed conditionally on 
$(\phi^{(a)}(x))_{x\in\R}$, as the self repelling diffusion on 
$I(\phi^{(a)})$, starting from $0$, 
with initial occupation profile 
$\check{\lambda}_{0}^{\ast}$, following Definition \ref{DefMain}. Let be the triple
\begin{displaymath}
(\phi^{(0)}(x)^{2},
\beta_{t},
\phi^{(a)}(x)^{2})_{x\in\R, 0\leq t\leq \tau^{\beta}_{a^{2}/2}},
\end{displaymath}
jointly distributed as in 
the Ray-Knight coupling 
(Definition \ref{Def RK coupl}).
Let be
\begin{displaymath}
\widecheck{T}^{\beta,a}=\tau^{\beta}_{a^{2}/2}-
\sup\lbrace t\in [0,\tau^{\beta}_{a^{2}/2}]\vert
\phi^{(0)}(\beta_{t})=0~\text{and}~
\forall s\in[0,t),\beta_{s}\neq\beta_{t}\rbrace.
\end{displaymath}
Then the couple
\begin{displaymath}
(\widecheck{X}^{\ast}_{t},
\phi^{(a)}(x)^{2})_{x\in\R, 0\leq t\leq \widecheck{T}^{\ast}}
\end{displaymath}
has the same distribution as 
\begin{displaymath}
(\beta_{\tau^{\beta}_{a^{2}/2}-t},
\phi^{(a)}(x)^{2})_{x\in\R, 0\leq t\leq \widecheck{T}^{\beta,a}}.
\end{displaymath}
\end{thm}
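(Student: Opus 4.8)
The plan is to prove Theorem \ref{ThmRKInv} by a discrete approximation argument, leveraging Theorem \ref{ThmSIConv} together with the known inversion of the generalized second Ray-Knight identity on discrete electrical networks from \cite{ST2016InvRK,LST2017InvRK}. First I would fix the mesh $\J^{(n)}=2^{-n}\Z\cap\R$ (truncated to a large interval $[-M,M]$ that is eventually removed by a limiting argument) and consider the discrete massless GFF $\phi^{(a),n}$ on $\J^{(n)}$ with boundary condition $a$ at $0$ and with the natural conductances $2^{2n}$, so that it converges in law to the continuum field $\phi^{(a)}$. On the discrete side, the second Ray-Knight identity holds exactly: if $\beta^{(n)}$ is the associated continuous-time symmetric Markov jump process started at $0$ and run up to the inverse local time $\tau^{(n)}_{a^2/2}$ at $0$, then $(\tfrac12(\phi^{(0),n})^2+L^{(n)}_{\tau})$ has the law of $\tfrac12(\phi^{(a),n})^2$. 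The discrete inversion results of \cite{ST2016InvRK,LST2017InvRK} identify the conditional law of the time-reversed trajectory $\beta^{(n)}_{\tau-t}$, given the field $\phi^{(a),n}$, as exactly the self-interacting jump process with rates \eqref{EqIntRate}, i.e.\ the process $\widecheck X^{(n)}$ of Theorem \ref{ThmSIConv} with initial occupation profile $\check\lambda^{(n)}_0=(\phi^{(a),n})^2$, run until its occupation profile is exhausted at the location $\widecheck X^{(n)}_{\widecheck T^{(n)}}$.

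The second step is to pass to the limit on both sides simultaneously. On the self-repelling side, Theorem \ref{ThmSIConv} gives, for each fixed $\eps>0$, convergence in law of $(\widecheck X^{(n)}_{t\wedge \widecheck T^{(n)}_\eps},\check\lambda^{(n)}_{t\wedge\widecheck T^{(n)}_\eps})$ to $(\widecheck X^*_{t\wedge\widecheck T^*_\eps},\check\lambda^*_{t\wedge\widecheck T^*_\eps})$ jointly with the field, once we condition on (a realization of) the GFF; since $\check\lambda^{(n)}_0\to\check\lambda^*_0$ in the appropriate sense and the construction of Definition \ref{DefMain} is continuous in the initial profile, this conditional convergence integrates to an unconditional joint convergence of the triple $(\widecheck X^{*,n}, \check\lambda^{*,n}, \phi^{(a),n})$. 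On the Brownian side, one needs convergence in law of the time-reversed walk $\beta^{(n)}_{\tau^{(n)}-t}$ together with $(\phi^{(a),n})^2$ to $(\beta_{\tau^\beta_{a^2/2}-t},(\phi^{(a)})^2)$; this follows from standard invariance-principle estimates for the walk $\beta^{(n)}$ converging to Brownian motion, convergence of the inverse local times $\tau^{(n)}_{a^2/2}\to\tau^\beta_{a^2/2}$ (uniform convergence of local time processes), and continuity of the time-reversal map on the relevant path space. Matching the two limits through the exact discrete identity then yields the statement, except that one must handle the stopping at level $\eps$ rather than at $0$.

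The third, and I expect the main, step is the removal of the $\eps$-cutoff, i.e.\ showing that as $\eps\downarrow 0$ the stopped processes converge to the full processes on $[0,\widecheck T^*]$ resp.\ $[0,\widecheck T^{\beta,a}]$, and that no mass of trajectory is lost near the terminal time. On the self-repelling side this is controlled by the construction: $\widecheck T^*_\eps\uparrow\widecheck T^*<\infty$ a.s., $\widecheck X^*_{\widecheck T^*}=(\widecheck S_0)^{-1}(y_{\mathrm{bif}})$, and $\check\lambda^*_t(\widecheck X^*_t)\to 0$ as $t\to\widecheck T^*$, all from the Bass--Burdzy bifurcation property and the divergence $\widecheck\Lambda_u(y_{\mathrm{bif}})\to+\infty$; one needs in addition a modulus-of-continuity estimate showing $\widecheck X^*_t$ does not oscillate wildly as $t\uparrow\widecheck T^*$, which comes from the Hölder regularity of the flow and of $\widecheck\Lambda$. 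On the Brownian side, the matching requires identifying $\widecheck T^{\beta,a}$ — the time after the \emph{last} visit to a point where the independent field $\phi^{(0)}$ vanishes and which is a running maximum/minimum of $\beta$ in the appropriate sense — as precisely the almost-sure limit of the times at which $\tfrac12(\phi^{(0),n}(\beta^{(n)}_s))^2+L^{(n)}_s(\beta^{(n)}_s)$ first drops to $\eps$ along the reversed walk; this in turn uses that under the Ray--Knight coupling, $\check\lambda^*_t(\widecheck X^*_t)$ corresponds to $\tfrac12(\phi^{(a)}(\beta_{\tau-t}))^2$ minus the local time accumulated by the reversed walk, which is exactly $\tfrac12\phi^{(0)}(\beta_{\tau-t})^2$ plus the residual forward local time, vanishing precisely at the last such visit. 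Assembling these limits — continuity of time-reversal, the $\eps\downarrow 0$ passage, and the identification of the terminal time — and invoking the exact discrete identity at each $n$ completes the proof; the delicate point throughout is the interchange of the $n\to\infty$ and $\eps\downarrow 0$ limits near the exhaustion time, which I would handle via a uniform (in $n$) estimate on $\sup_{t\le\widecheck T^{(n)}_\eps}|\widecheck X^{(n)}_t-\widecheck X^{(n)}_{\widecheck T^{(n)}_\eps}|$ tending to $0$ as $\eps\downarrow0$, together with the analogous estimate for the reversed walk.
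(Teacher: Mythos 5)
Your overall architecture (discrete Ray--Knight inversion, pass to the limit on both sides, remove the $\eps$-cutoff) has the right shape, but there is a genuine circularity at its core: you invoke Theorem \ref{ThmSIConv} to control the self-repelling side of the limit, and that theorem is not available at this stage. In the paper, Theorem \ref{ThmSIConv} for a general initial profile is deduced \emph{from} Theorem \ref{ThmRKInv} (via Proposition \ref{PropEmb2} and Section \ref{SecBBgeneral}), and even the special case $\check{\lambda}_{0}=(\phi^{(a)})^{2}$ is established only simultaneously with Theorem \ref{ThmRKInv}, inside Proposition \ref{PropDiscrBB}. The obstruction to proving Theorem \ref{ThmSIConv} directly --- which is exactly why it cannot be taken as an input here --- is the technical heart of the matter: one needs tightness and the uniform convergence to $1$ of the ratio $\check{\lambda}^{(n)}_{t}(x+2^{-n})/\check{\lambda}^{(n)}_{t}(x)$, and the paper obtains this for the squared-GFF profile precisely by embedding the discrete self-repelling processes into the time-reversed Brownian motion of the Ray--Knight coupling. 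So your second step silently assumes the hardest estimate instead of proving it.

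The paper's route also sidesteps the invariance-principle work you propose on the Brownian side: rather than a genuinely discrete GFF and jump process, it restricts the continuum triple of Theorem \ref{ThmRK} to $2^{-n}\Z$, taking the walk to be the trace $X^{\J^{\bullet}}_{q}=\beta_{(Q^{\J^{\bullet},\beta})^{-1}(q)}$ and the fields to be restrictions of the continuum fields; the discrete inversion (Corollary \ref{PropLST1}) applies to these, the reversed walk then converges \emph{almost surely} to the reversed Brownian path (Lemma \ref{LemEmb}) with no local-time invariance estimates needed, and the limit of the discrete self-repelling processes is identified from scratch by building time-dependent discrete scale changes and a discrete analogue of the divergent Bass--Burdzy flow, with a martingale functional CLT (Lemma \ref{LemClaim4}) and convergence of the discrete flows to the Bass--Burdzy flow (Lemma \ref{LemClaim7}). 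If you keep your genuinely-discrete scheme, you must supply an independent proof of the convergence of the discrete self-repelling walks with profiles $(\phi^{(a),n})^{2}$, i.e.\ redo the content of Proposition \ref{PropDiscrBB}, since citing Theorem \ref{ThmSIConv} is circular; your $\eps\downarrow 0$ passage and terminal-time identification are in the right spirit and do match the end of the paper's argument. (A minor point: with the paper's normalization the conductance between neighbors at distance $2^{-n}$ is $2^{n-1}$, not $2^{2n}$.)
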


The notation 
$(\widecheck{X}^{\ast}_{t},\check{\lambda}^{\ast}_{t}(x),
\widecheck{T}^{\ast})$ is 
reserved to the case of the initial occupation profile
$\check{\lambda}^{\ast}_{0}(x)=\phi^{(a)}(x)^{2}$, so as to
to avoid confusion with the case of generic $\check{\lambda}_{0}$.

Note that Theorem \ref{ThmRKInv} also trivially implies that
the triple
\begin{equation}
\label{Eq triplet}
(\widecheck{X}^{\ast}_{t},
\phi^{(a)}(x)^{2},
\check{\lambda}_{t}^{\ast}(x))_{x\in\R, 0\leq t\leq \widecheck{T}^{\ast}}
\end{equation}
has the same distribution as
\begin{equation}
\label{Eq triplet TR}
(\beta_{\tau^{\beta}_{a^{2}/2}-t},
\phi^{(a)}(x)^{2},
\phi^{(0)}(x)^{2}+ 2\ell^{\beta}_{\tau^{\beta}_{a^{2}/2}-t}(x)
)_{x\in\R, 0\leq t\leq \widecheck{T}^{\beta,a}}.
\end{equation}
Moreover, since the law of
$(\beta_{\tau^{\beta}_{a^{2}/2}-t})_{0\leq t\leq \tau^{\beta}_{a^{2}/2}}$
is the same as that of
$(\beta_{t})_{0\leq t\leq \tau^{\beta}_{a^{2}/2}}$,
we have that \eqref{Eq triplet} is also distributed as
\begin{displaymath}
(\beta_{t},
\phi^{(a)}(x)^{2},
\phi^{(0)}(x)^{2} + 
2\ell^{\beta}_{\tau^{\beta}_{a^{2}/2}}(x)
- 2\ell^{\beta}_{t}(x)
)_{x\in\R, 0\leq t\leq T^{\beta,a}},
\end{displaymath}
where
\begin{displaymath}
T^{\beta,a} =
\inf\lbrace t\in [0,\tau^{\beta}_{a^{2}/2}]\vert
\phi^{(0)}(\beta_{t})=0~\text{and}~
\forall s\in(t,\tau^{\beta}_{a^{2}/2}],\beta_{s}\neq\beta_{t}\rbrace.
\end{displaymath}
We prefer the time-reversed presentation
\eqref{Eq triplet TR} as we imagine the Brownian path
$(\beta_{t})_{0\leq t\leq \tau^{\beta}_{a^{2}/2}}$
being reconstructed from its end 
by starting from the final condition
$(\phi^{(a)}(x)^{2})_{x\in\R}$
for $(\phi^{(0)}(x)^{2}+2\ell^{\beta}_{t}(x))_{x\in\R}$.

One could also extend the definition of the self-repelling diffusion to metric graphs, where it is again related to the inversion of the Ray-Knight identity. The proof would be essentially the same as for an interval. We won't detail it here. A metric graph is obtained by replacing in an undirected graph each edge by a continuous line segment of certain length, corresponding to the resistance of the edge.
For background on Markovian (non-selfinteracting) diffusions on metric
graphs, see \cite{BaxterChacon1984DiffusionsNetworks,EnriquezKifer2001BMGraphs,
FW2016PhD}. The Gaussian free field on metric graphs was introduced in
\cite{Lupu2016Iso}, in relation with isomorphism theorems.

\subsection*{Other works on self-interacting diffusions in dimension one}

Now let us review some other works on self-interacting diffusions in dimension one and their relations to ours. Two other Bass-Burdzy flows appeared in construction of self-interacting diffusions. First, in \cite{Warren05BBFlow} it was shown that the flow of solutions to 
\begin{displaymath}
\dfrac{dY_{u}}{du} = \1_{Y_{u}>B_{u}}
\end{displaymath}
was related to the Brownian first passage bridge conditioned by its family of local times and to the Brownian burglar \cite{WarrenYor1998Burglar}.
There the problem is similar to ours, i.e. constructing a Brownian motion with some conditioning on its family of local times, yet it is different and the processes obtained are different. Then, in \cite{LST2018LRM} we constructed a linearly reinforced diffusion on 
$\R$ out of the flow of solutions to
\begin{displaymath}
\dfrac{dY_{u}}{du} = -\1_{Y_{u}>B_{u}} + \1_{Y_{u}<B_{u}},
\end{displaymath}
that is to say the signs are opposite to those in \eqref{EqBB}.
The reinforced diffusion in \cite{LST2018LRM} can be considered as a dual of the self-repelling diffusion in the present paper.

Our self-repelling diffusion is different from the Brownian polymer models studied in \cite{DurrettRogers92BrownianPolymer,CranstonLeJan952Case,
CranstonMountford96BrownianPolymer,Pemantle07Survey,
TarresTothValko2012BrownianPolymer,
BenaimCiotirGauthier2015SelfRep,
Gauthier2018SelfRep}, as here the interaction of the moving particle with the occupation profile occurs locally, at zero range, and is not an average over positive ranges. In other words, we do not mollify the occupation profile prior to taking its derivative. Also, for that reason, we do not expect our process to be a semi-martingale as the above Brownian polymer models, but only a Dirichlet process in the sense of Föllmer \cite{Follmer81DirichletProc}, admitting an adapted decomposition 
into a continuous local martingale and zero quadratic variation drift, with the drift term not necessarily of bounded variation
(see also \cite{HuWarren00BBFlow} and
\cite{LST2018LRM}). Our process is also different from the true self-repelling motion (TSRM) introduced by Tóth and Werner in \cite{TothWerner98TSRM}, as our process, unlike the TSRM, has the Hölder regularity of a Brownian motion and does not exhibit a $2/3$ scaling exponent.
We do not know if our process is related to the continuum directed random polymer introduced by Alberts, Khanin and Quastel in \cite{AlbertsKhaninQuastel14DirectedRandomPolymer}.

\subsection*{Organization of the article}

We will first prove Theorem \ref{ThmRKInv}, as well as Theorem
\ref{ThmSIConv} in the particular case when the initial occupation profile is random, given by the square of the free field. The proof relies on the restriction of continuous processes to discrete subsets and on taking the limit of discrete space processes inverting Ray-Knight in the discrete setting. On one hand these discrete space processes are embedded into a standard Brownian motion. So the limit exists \textit{a priori} and is a Brownian path. On the other hand one can construct out of the discrete space processes discrete analogues of the divergent Bass-Burdzy flow converging to the latter. Further, the proof of Theorem \ref{ThmSIConv} for general occupation profile will follow out of a path transformation as in Proposition \ref{PropElementary} (1). 

The reason why we do not proceed directly to the proof of
Theorem \ref{ThmSIConv} for general occupation profile is that we need tightness and need the ratio
\begin{displaymath}
\dfrac{\check{\lambda}^{(n)}
_{t\wedge \widecheck{T}^{(n)}_{\varepsilon}}
(x+2^{-n})}
{\check{\lambda}^{(n)}
_{t\wedge \widecheck{T}^{(n)}_{\varepsilon}}(x)}
\end{displaymath}
to converge to $1$ as $n\to +\infty$, uniformly in
$(x,t)$ on compact subsets. For
$\check{\lambda}^{\ast}_{0}=(\phi^{(a)})^{2}$ this is achieved by 
embedding the discrete space self-repelling processes into a Brownian motion.

Our article is organized as follows. In Section \ref{SecProperties} we will give some properties of our self-repelling diffusion. 
In Section \ref{SecDiscr} we will recall how the self-repelling jump processes of Theorem \ref{ThmSIConv} appear in the inversion of the Ray-Knight identity on discrete subsets of $\R$. This is a result obtained in \cite{LST2017InvRK}. Using this we will prove in Section \ref{SecBBGFF}
the Theorem \ref{ThmRKInv} and the particular case of Theorem \ref{ThmSIConv} when 
$\check{\lambda}_{0}(x)=
\check{\lambda}_{0}^{\ast}(x)=\phi^{(a)}(x)^{2}$.
In Section \ref{SecBBgeneral} we will prove 
Theorem \ref{ThmSIConv} in general.

\section{Elementary properties of the self-repelling diffusion}
\label{SecProperties}

First, we give some elementary properties of
$(\widecheck{X}_{t},\check{\lambda}_{t}(x))_{x\in I, 0\leq t\leq 
\widecheck{T}}$. They are straighforward and come without proofs. For proofs of analogous statements, see Proposition 2.4 in \cite{LST2018LRM}.

\begin{prop}
\label{PropElementary}
(1) Let $I$ and $I^{\bullet}$ be two open subintervals of $\R$. 
Let be $\check{\lambda}_{0}$ and
$\check{\lambda}_{0}^{\bullet}$ two admissible initial occupation profiles on $I$, respectively $I^{\bullet}$, and 
$(\widecheck{X}_{t},\check{\lambda}_{t}(x))
_{x\in I, 0\leq t\leq \widecheck{T}}$, 
$(\widecheck{X}^{\bullet}_{t},\check{\lambda}^{\bullet}_{t}(x))
_{x\in I^{\bullet}, 0\leq t\leq \widecheck{T}^{\bullet}}$ the corresponding self-repelling diffusions, starting from $x_{0}\in I$, respectively 
$x^{\bullet}_{0}\in I^{\bullet}$. One can go from one to the other by a deterministic path transformation. More precisely, let
\begin{displaymath}
\widecheck{S}_{0}(x)=
\int_{x_{0}}^{x}\check{\lambda}_{0}(r)^{-1} dr,
x\in I,
\qquad
\widecheck{S}_{0}^{\bullet}(x)=
\int_{x_{0}^{\bullet}}^{x}\check{\lambda}^{\bullet}_{0}(r)^{-1} dr,
x\in I^{\bullet}.
\end{displaymath}
Let $t\mapsto \theta^{\bullet}(t)$ be the change of time
\begin{displaymath}
d\theta^{\bullet}(t)=
\check{\lambda}_{0}^{\bullet}(
(\widecheck{S}_{0}^{\bullet})^{-1}\circ\widecheck{S}_{0}
(\widecheck{X}_{t}))^{2}
\check{\lambda}_{0}(\widecheck{X}_{t})^{-2}
dt.
\end{displaymath}
Then then process
$((\widecheck{S}_{0}^{\bullet})^{-1}\circ\widecheck{S}_{0}
(\widecheck{X}_{(\theta^{\bullet})^{-1}(t)}),
\check{\lambda}_{(\theta^{\bullet})^{-1}(t)}
(\widecheck{S}_{0}^{-1}\circ\widecheck{S}^{\bullet}_{0}(x)))
_{x\in I^{\bullet}, 0\leq t\leq \theta^{\bullet}(\widecheck{T})}$
has the same law as 
$(\widecheck{X}^{\bullet}_{t},\check{\lambda}^{\bullet}_{t}(x))
_{x\in I^{\bullet}, 0\leq t\leq \widecheck{T}^{\bullet}}$.

(2)(Strong Markov property) For any $T$ stopping time for the natural filtration of
$(\widecheck{X}_{t\wedge \widecheck{T}})_{t\geq 0}$, such that
$T<\widecheck{T}$ a.s., the process
$(\widecheck{X}_{T+t},\check{\lambda}_{T+t}(x))_{x\in I,0\leq t\leq \widecheck{T}-T}$, conditional on the past before $T$, is a self-repelling diffusion with inital occupation profile
$\check{\lambda}_{T}$.

(3) Let $a<b\in I$ such that $a<x_{0}<b$. Let $t_{a,b}$ be the first time $t$ that
$\widecheck{X}_{t}$ hits $a$ or $b$. We consider the 
stopping time $t_{a,b}\wedge\widecheck{T}$.
Given a Brownian motion $(B_{u})_{u\geq 0}$.
Let $U^{\uparrow \widecheck{S}_{0}(b)}\in (0,+\infty]$ 
the first time $B_{u}-u$ hits $\widecheck{S}_{0}(b)$, whenever this happens.
Let $U^{\downarrow \widecheck{S}_{0}(a)}\in (0,+\infty]$ the first time $B_{u}+u$ hits 
$\widecheck{S}_{0}(a)$, whenever this happens. Then
\begin{displaymath}
\mathbb{P}(t_{a,b}<\widecheck{T},\widecheck{X}_{t_{a,b}}=b)=
\mathbb{P}(U^{\uparrow \widecheck{S}_{0}(b)}<U^{\downarrow \widecheck{S}_{0}(a)}),
\qquad
\mathbb{P}(t_{a,b}<\widecheck{T},\widecheck{X}_{t_{a,b}}=a)=
\mathbb{P}(U^{\downarrow \widecheck{S}_{0}(a)}<U^{\uparrow \widecheck{S}_{0}(b)}),
\end{displaymath}
\begin{displaymath}
\mathbb{P}(t_{a,b}>\widecheck{T})=
\mathbb{P}(U^{\uparrow \widecheck{S}_{0}(b)}=U^{\downarrow \widecheck{S}_{0}(a)}
=+\infty).
\end{displaymath}
In particular,
\begin{displaymath}
\mathbb{P}(t_{a,b}<\widecheck{T},\widecheck{X}_{t_{a,b}}=b)\geq
\mathbb{P}(t_{a,b}<\widecheck{T},\widecheck{X}_{t_{a,b}}=a)
\end{displaymath}
if and only if $$
\int_{x_{0}}^{b}\check{\lambda}_{0}(r)^{-1} dr
=\widecheck{S}_{0}(b)\leq\vert\widecheck{S}_{0}(a)\vert=
\int_{a}^{x_{0}}\check{\lambda}_{0}(r)^{-1}
dr .$$
\end{prop}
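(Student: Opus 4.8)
The plan is to read all three assertions off the explicit Bass--Burdzy representation of Definition \ref{DefMain}: $(\widecheck{X}_{t},\check{\lambda}_{t})$ is a deterministic functional of the flow $(\widecheck{\Psi}_{u})_{u\geq0}$, of $\check{\xi}_{u}=(\widecheck{\Psi}_{u})^{-1}(B_{u})$, of the local times $\widecheck{\Lambda}_{u}$, and of the time change \eqref{EqChgTime}, so each statement becomes a statement about that flow, to be settled with the properties quoted from \cite{BassBurdzy99StochBiff,HuWarren00BBFlow}. Two preliminary remarks organise things. First, $\check{\xi}$, $\widecheck{\Lambda}$ and $\widecheck{\Psi}$ depend only on the driving Brownian motion $B$, not on $(I,\check{\lambda}_{0},x_{0})$; the whole dependence on the initial data sits in $\widecheck{S}_{0}$ and in \eqref{EqChgTime}. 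Second, $\widecheck{S}_{0}(\widecheck{X}_{t})=\check{\xi}_{u(t)}$ and $\check{\lambda}_{0}(x)/\check{\lambda}_{t}(x)=1+2\widecheck{\Lambda}_{u(t)}(\widecheck{S}_{0}(x))$, so from $\widecheck{X}$ and the known $\check{\lambda}_{0}$ one recovers $\check{\xi}_{u(\cdot)}$, $\widecheck{\Lambda}_{u(\cdot)}$, hence $B$ up to the time change; consequently $u(T)$ is a stopping time for the time-changed filtration of $B$ whenever $T$ is one for $\widecheck{X}$, and $(\widecheck{X}_{T},\check{\lambda}_{T})$ is measurable with respect to it. Establishing this measurability reduction cleanly is the one genuinely delicate point.

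For (1), drive $\widecheck{X}$ and $\widecheck{X}^{\bullet}$ by the \emph{same} $B$, so that they share $\check{\xi}_{u}$ and $\widecheck{\Lambda}_{u}$. Then $\widecheck{X}_{t}=\widecheck{S}_{0}^{-1}(\check{\xi}_{u(t)})$, $\widecheck{X}^{\bullet}_{t}=(\widecheck{S}_{0}^{\bullet})^{-1}(\check{\xi}_{u^{\bullet}(t)})$, and comparing \eqref{EqChgTime} for the two profiles gives $dt^{\bullet}/dt=\check{\lambda}_{0}^{\bullet}((\widecheck{S}_{0}^{\bullet})^{-1}\circ\widecheck{S}_{0}(\widecheck{X}_{t}))^{2}\check{\lambda}_{0}(\widecheck{X}_{t})^{-2}=d\theta^{\bullet}(t)$; since both clocks use the same $u$-parametrisation, $u^{\bullet}(\theta^{\bullet}(t))=u(t)$, whence $\widecheck{X}^{\bullet}_{\theta^{\bullet}(t)}=(\widecheck{S}_{0}^{\bullet})^{-1}\circ\widecheck{S}_{0}(\widecheck{X}_{t})$. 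The occupation-profile identity then follows from $\check{\lambda}_{t}(x)=\check{\lambda}_{0}(x)(1+2\widecheck{\Lambda}_{u(t)}(\widecheck{S}_{0}(x)))^{-1}$ after the change of variable $x\mapsto\widecheck{S}_{0}^{-1}\circ\widecheck{S}_{0}^{\bullet}(x)$, keeping track of the Jacobian of that map. For (2), apply the strong Markov property of $B$ at $u_{T}:=u(T)$; with $\widetilde{B}_{v}:=B_{u_{T}+v}-B_{u_{T}}$ and $\widetilde{\Psi}$ the flow it drives, uniqueness for \eqref{EqBB} gives the cocycle relation $\widecheck{\Psi}_{u_{T}+v}(y)=\widetilde{\Psi}_{v}(\widecheck{\Psi}_{u_{T}}(y)-B_{u_{T}})+B_{u_{T}}$, and $\check{\lambda}_{T}(r)^{-1}=\frac{d}{dr}[\widecheck{\Psi}_{u_{T}}(\widecheck{S}_{0}(r))]$ (from $\partial_{y}\widecheck{\Psi}_{u}=1+2\widecheck{\Lambda}_{u}$) shows that the scale function attached to $(\widecheck{X}_{T},\check{\lambda}_{T})$ is $\widecheck{S}_{0}^{(T)}=\widecheck{\Psi}_{u_{T}}\circ\widecheck{S}_{0}-B_{u_{T}}$. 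Substituting these into Definition \ref{DefMain} run from $(\widecheck{X}_{T},\check{\lambda}_{T})$ with driving motion $\widetilde{B}$ reproduces $(\widecheck{X}_{T+\cdot},\check{\lambda}_{T+\cdot})$ term by term, and $\widetilde{B}$ is independent of $\mathcal{F}^{B}_{u_{T}}$, which measures the past; this yields the strong Markov property.

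For (3), the key point is the behaviour of $u\mapsto\widecheck{\Psi}_{u}(y)$ before $\check{\xi}$ first reaches $y$. If $y=\widecheck{S}_{0}(b)>0$, then $\widecheck{\Psi}_{0}(y)=y>0=B_{0}$, so by \eqref{EqBB} $\widecheck{\Psi}_{u}(y)=y+u$ until this curve first meets $B$, at the time $U^{\uparrow y}=\inf\{u:B_{u}-u=y\}$; there $\widecheck{\Psi}_{U^{\uparrow y}}(y)=B_{U^{\uparrow y}}$, so $\check{\xi}_{U^{\uparrow y}}=y$, while $\check{\xi}_{u}<y$ for $u<U^{\uparrow y}$. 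Symmetrically, $\check{\xi}$ first reaches $\widecheck{S}_{0}(a)<0$ at $U^{\downarrow\widecheck{S}_{0}(a)}=\inf\{u:B_{u}+u=\widecheck{S}_{0}(a)\}$. Since $\widecheck{X}_{t}=\widecheck{S}_{0}^{-1}(\check{\xi}_{u(t)})$ with $u(\cdot)$ a strictly increasing bijection of $[0,\widecheck{T})$ onto $[0,+\infty)$, the event $\{t_{a,b}<\widecheck{T},\,\widecheck{X}_{t_{a,b}}=b\}$ is $\{\check{\xi}\text{ reaches }\widecheck{S}_{0}(b)\text{ before }\widecheck{S}_{0}(a)\}=\{U^{\uparrow\widecheck{S}_{0}(b)}<U^{\downarrow\widecheck{S}_{0}(a)}\}$, likewise with $a$, and $\{t_{a,b}>\widecheck{T}\}=\{\check{\xi}\text{ reaches neither level}\}=\{U^{\uparrow\widecheck{S}_{0}(b)}=U^{\downarrow\widecheck{S}_{0}(a)}=+\infty\}$, the last equality also using that $\check{\xi}$ returns to its limit $y_{\rm bif}$ (bifurcation property), so that there $\widecheck{X}_{\widecheck{T}}\notin\{a,b\}$. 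The three events partition the sample space because $U^{\uparrow c}=U^{\downarrow d}=u^{\ast}<+\infty$ would force $2u^{\ast}=d-c<0$. The final comparison follows from the $B\mapsto-B$ symmetry of \eqref{EqBB} (under which $\check{\xi}\mapsto-\check{\xi}$, so $U^{\uparrow c}$, $U^{\downarrow d}$ turn into $U^{\downarrow-c}$, $U^{\uparrow-d}$): it identifies $\mathbb{P}(\widecheck{X}_{t_{a,b}}=a)$ with the expression for $\mathbb{P}(\widecheck{X}_{t_{a,b}}=b)$ after swapping $\widecheck{S}_{0}(b)$ and $|\widecheck{S}_{0}(a)|$, and the pointwise monotonicity of the hitting times $U^{\uparrow\cdot}$, $U^{\downarrow\cdot}$ in their levels then gives $\mathbb{P}(\widecheck{X}_{t_{a,b}}=b)\geq\mathbb{P}(\widecheck{X}_{t_{a,b}}=a)$ if and only if $\widecheck{S}_{0}(b)\leq|\widecheck{S}_{0}(a)|$.

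The step I expect to be the main obstacle is precisely the measurability reduction underpinning (2): identifying the natural filtration of $\widecheck{X}$ with the time-changed filtration of $B$, so that $u(T)$ is a genuine stopping time for $B$ and the shifted flow $\widetilde{\Psi}$ is genuinely independent of the past, together with the attendant care about the non-uniqueness of \eqref{EqBB} at the (Lebesgue-null in $u$) times where $\widecheck{\Psi}_{u}(y)=B_{u}$. This is handled as in \cite{BassBurdzy99StochBiff,HuWarren00BBFlow}; everything else is the translation into the $u$-clock plus the elementary Brownian computations indicated above.
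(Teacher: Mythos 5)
Your proposal is correct and is essentially the paper's (unwritten) argument: the proposition is stated without proof as a straightforward consequence of Definition \ref{DefMain} (with reference to Proposition 2.4 of \cite{LST2018LRM}), and your reading of (1)--(3) directly off the coupled Bass--Burdzy representation — same driving Brownian motion and shared $(\check{\xi}_{u},\widecheck{\Lambda}_{u})$ for (1), the flow cocycle plus the strong Markov property of $B$ at $u(T)$ for (2), and the unit-slope flow lines before meeting $B$ identifying the first passage of $\check{\xi}$ to $\widecheck{S}_{0}(b)$, $\widecheck{S}_{0}(a)$ with $U^{\uparrow\widecheck{S}_{0}(b)}$, $U^{\downarrow\widecheck{S}_{0}(a)}$ for (3) — is exactly that route. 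Your ``Jacobian'' remark in (1) is indeed the right bookkeeping: the transferred profile $\check{\lambda}_{(\theta^{\bullet})^{-1}(t)}(\widecheck{S}_{0}^{-1}\circ\widecheck{S}^{\bullet}_{0}(x))$ agrees with $\check{\lambda}^{\bullet}_{t}(x)$ only after multiplication by the deterministic, time-independent factor $\check{\lambda}^{\bullet}_{0}(x)\,\check{\lambda}_{0}(\widecheck{S}_{0}^{-1}\circ\widecheck{S}^{\bullet}_{0}(x))^{-1}$ (the derivative of the spatial map), which is how the displayed identity in the statement should be read.
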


Next we state that our self-repelling diffusion depends continuously on the initial occupation profile. 

\begin{lemma}
\label{LemContinuity}
Let $(I_{k})_{n\geq 0}$ be a sequence of open subintervals of $\R$ such that
\begin{displaymath}
\lim_{n\to +\infty}\inf I_{k} = \inf I \in [-\infty,+\infty),
\qquad
\lim_{n\to +\infty}\sup I_{k} = \sup I \in (-\infty,+\infty].
\end{displaymath}
On each $I_{k}$ we consider
$\check{\lambda}_{0}^{I_{k}}$ an admissible occupation profile, and we assume that for all $K$ compact subset of $I$,
\begin{displaymath}
\lim_{k\to +\infty}
\sup_{K\cap I_{k}}\vert\check{\lambda}_{0}
-\check{\lambda}_{0}^{I_{k}}\vert=0.
\end{displaymath}
Let $(\widecheck{X}^{I_{k}}_{t},\check{\lambda}_{t}^{I_{k}}(x))
_{x\in I_{k},0\leq t\leq \widecheck{T}^{I_{k}}}$ be the self-repelling diffusion on $I_{k}$ with initial occupation profile $\check{\lambda}_{0}^{I_{k}}$. We assume that 
$\lim_{k\to +\infty} \widecheck{X}^{I_{k}}_{0}=\widecheck{X}_{0}\in I$.
Then, as $k\to +\infty$,
\begin{displaymath}
(\widecheck{X}^{I_{k}}_{t\wedge\widecheck{T}^{I_{k}}},\check{\lambda}_{t\wedge \widecheck{T}^{I_{k}}}^{I_{k}}(x))
_{x\in I_{k},t\geq 0}
\end{displaymath}
converges in law to
\begin{displaymath}
(\widecheck{X}_{t\wedge\widecheck{T}},
\check{\lambda}_{t\wedge \widecheck{T}}(x))
_{x\in I,t\geq 0},
\end{displaymath}
where the convergence is for the uniform topology in
$t\in [0,+\infty)$, and uniform on compact subsets of $I$ 
for $x$.
\end{lemma}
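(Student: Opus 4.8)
The plan is to work directly with the explicit construction in Definition \ref{DefMain}, using a single driving Brownian motion $(B_u)_{u\ge0}$ and the associated divergent Bass-Burdzy flow $(\widecheck{\Psi}_u)_{u\ge0}$, $\check\xi_u$, and $\widecheck\Lambda_u$ for \emph{all} the processes at once. For each $k$, set $\widecheck{S}_0^{I_k}(x)=\int_{x_0^{I_k}}^x(\check\lambda_0^{I_k}(r))^{-1}dr$ with $x_0^{I_k}=\widecheck{X}_0^{I_k}$, and similarly $\widecheck{S}_0$ for the limiting profile with $x_0=\widecheck{X}_0$. The point is that $\widecheck{X}_t^{I_k}$, $\check\lambda_t^{I_k}$, $\widecheck{T}^{I_k}$ are all \emph{continuous deterministic functionals} of the triple $\big(\check\xi_\cdot,\widecheck\Lambda_\cdot(\cdot),\;\check\lambda_0^{I_k}\circ(\widecheck{S}_0^{I_k})^{-1}\big)$ through the time change \eqref{EqChgTime}. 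So I would first show that $(\widecheck{S}_0^{I_k})^{-1}\to\widecheck{S}_0^{-1}$ and $\check\lambda_0^{I_k}\circ(\widecheck{S}_0^{I_k})^{-1}\to\check\lambda_0\circ\widecheck{S}_0^{-1}$ locally uniformly on $\widecheck{S}_0(I)$; this is a purely analytic statement following from $\sup_{K\cap I_k}|\check\lambda_0-\check\lambda_0^{I_k}|\to0$ on compacts $K\subset I$ together with the convergence of the endpoints $\inf I_k\to\inf I$, $\sup I_k\to\sup I$ and $x_0^{I_k}\to x_0$, using that the admissibility condition \eqref{EqCond} (the integrals $\int\check\lambda_0^{-1}$ diverging at both ends) is preserved in the limit so that no mass escapes. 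Since the whole construction is deterministic given $B$, convergence in law will in fact come from almost sure convergence on a common probability space.

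Next I would handle the time change. On the scale coordinate $y=\widecheck{S}_0^{I_k}(x)$, the density defining $t^{I_k}(u)$ is $\check\lambda_0^{I_k}((\widecheck{S}_0^{I_k})^{-1}(\check\xi_u))^2(1+2\widecheck\Lambda_u(\check\xi_u))^{-2}$, which by the first step converges locally uniformly in $u$ to the corresponding density for the limit profile; integrating, $t^{I_k}(u)\to t(u)$ uniformly on compacts in $u$, and (a.s.) $\widecheck{T}^{I_k}\to\widecheck{T}$ because $\check\xi_u$ has bounded range with $\check\xi_u\to y_{\rm bif}$, so the tail of the integral defining $\widecheck{T}$ is uniformly small. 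Inverting, $u^{I_k}(t)\to u(t)$ uniformly on compact subsets of $[0,\widecheck{T})$; one needs here that the limiting density is strictly positive, which holds since $\check\lambda_0>0$ on $I$ and $\widecheck\Lambda_u$ is finite for $u<+\infty$ (with the blow-up only at $y_{\rm bif}$ as $u\to\infty$, matching $t\to\widecheck{T}$). Composing, $\widecheck{X}_t^{I_k}=(\widecheck{S}_0^{I_k})^{-1}(\check\xi_{u^{I_k}(t)})\to\widecheck{S}_0^{-1}(\check\xi_{u(t)})=\widecheck{X}_t$ uniformly on compacts of $[0,\widecheck{T})$, and similarly $\check\lambda_t^{I_k}(x)=\check\lambda_0^{I_k}(x)(1+2\widecheck\Lambda_{u^{I_k}(t)}(\widecheck{S}_0^{I_k}(x)))^{-1}$ converges locally uniformly in $(x,t)$, using continuity of $\widecheck\Lambda$ in both arguments.

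The remaining issue is uniformity up to and past $\widecheck{T}$: the statement asks for convergence in the uniform topology on $t\in[0,+\infty)$ for the stopped processes $\widecheck{X}^{I_k}_{t\wedge\widecheck{T}^{I_k}}$. After time $\widecheck{T}$ the processes are constant (equal to $\widecheck{X}_{\widecheck{T}}=\widecheck{S}_0^{-1}(y_{\rm bif})$, resp. its $I_k$-analogue), so I would control the approach to $\widecheck{T}$ quantitatively: from $\widecheck{T}-t=\frac12\int_{\check\xi\text{-range near }y_{\rm bif}}\check\lambda_0^2(1-(1+2\widecheck\Lambda_{+\infty})^{-1}-\dots)$ one gets a modulus of continuity for $t\mapsto\widecheck{X}_t$ near $\widecheck{T}$ that is stable under the approximation, so that $\sup_{t}|\widecheck{X}^{I_k}_{t\wedge\widecheck{T}^{I_k}}-\widecheck{X}_{t\wedge\widecheck{T}}|\to0$ a.s. I expect \textbf{the main obstacle} to be exactly this near-$\widecheck{T}$ control together with the possibility that the intervals $I_k$ shrink or grow: one must rule out that the approximating diffusion's scale-image escapes to an endpoint while the limiting one does not, and must get a \emph{uniform} (in $k$) handle on the local time blow-up at $y_{\rm bif}$ governing both $\widecheck{T}^{I_k}$ and the oscillation of $\widecheck{X}^{I_k}$ just before it; this is where admissibility \eqref{EqCond} and the Hölder regularity of $\widecheck\Lambda_u$, $\partial_y\widecheck\Psi_u$ quoted after \eqref{EqBB} are used. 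The analogous statement is Proposition 2.4 of \cite{LST2018LRM}, whose argument transfers with the divergent flow replacing the convergent one.
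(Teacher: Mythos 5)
Your proposal is correct and follows essentially the same route as the paper: the paper's proof simply observes that the statement is an immediate consequence of the explicit construction in Definition \ref{DefMain} (together with the path transformation of Proposition \ref{PropElementary} (1)), and that coupling all the processes through the same driving Brownian motion and Bass-Burdzy flow upgrades the convergence to an almost sure one, which is exactly the coupling-and-continuity argument you spell out in more detail.
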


\begin{proof}
This is an immediate consequence of
Definition \ref{DefMain} and Proposition \ref{PropElementary} (1). If moreover all of the processes
$(\widecheck{X}^{I_{k}}_{t},\check{\lambda}_{t}^{I_{k}}(x))
_{x\in I_{k},0\leq t\leq \widecheck{T}^{I_{k}}}$ and
$(\widecheck{X}_{t\wedge\widecheck{T}},
\check{\lambda}_{t\wedge \widecheck{T}}(x))
_{x\in I,t\geq 0}$
are constructed of the same driving Brownian motion
$(B_{u})_{u\geq 0}$ (Definition \ref{DefMain}), then the convergence is
a.s.
Indeed, one uses the same process
$(\check{\xi}_{u})_{u\geq 0}$, and the change of scale and change of time functions involved in the construction of
$(\widecheck{X}^{I_{k}}_{t},\check{\lambda}_{t}^{I_{k}}(x))
_{x\in I_{k},0\leq t\leq \widecheck{T}^{I_{k}}}$
converge as $k\to +\infty$.
\end{proof}

\section{Inversion of the Ray-Knight identity on a discrete subset}
\label{SecDiscr}

Consider the triple 
\begin{displaymath}
(\phi^{(0)}(x)^{2},\beta_{t},
\phi^{(a)}(x)^{2})_{x\in\R, 0\leq t\leq \tau^{\beta}_{a^{2}/2}},
\end{displaymath}
jointly distributed as in the Ray-Knight 
coupling (Definition \ref{Def RK coupl}).
Let $I(\phi^{(a)})$ be the connected component of $0$ in 
$\lbrace x\in\R\vert \phi^{(a)}(x)>0\rbrace$.

Let $\ell^{\beta}_{t}(x)$ be the family of local times of
the Brownian motion $\beta_{t}$.
Let $\J^{\bullet}$ be a countable discrete subset of $\R$, containing $0$, unbounded in both directions. Consider the change of time
\begin{displaymath}
Q^{\J^{\bullet},\beta}(t)=
\sum_{x\in\J^{\bullet}}\ell^{\beta}_{t}(x).
\end{displaymath}
Define $X^{\J^{\bullet}}_{q}
=\beta_{(Q^{\J^{\bullet},\beta})^{-1}(q)}$, where 
$(Q^{\J^{\bullet},\beta})^{-1}$ is the right-continuous inverse
of $Q^{\J^{\bullet},\beta}$. It is a nearest neighbor Markov jump process on $\J^{\bullet}$, with jump rate from a vertex $x_{1}$ to a neighbor $x_{2}$ equal to the conductance
\begin{equation}
\label{EqConductance}
C(x_{1},x_{2})=\dfrac{1}{2\vert x_{2}-x_{1}\vert}.
\end{equation}

Given $x\in \J^{\bullet}$, $\lambda^{\J^{\bullet}}_{q}(x)$ will denote
\begin{displaymath}
\lambda^{\J^{\bullet}}_{q}(x)=\phi^{(0)}(x)^{2}+
2\ell^{\beta}_{(Q^{\J^{\bullet},\beta})^{-1}(q)}(x)=
\lambda^{\J^{\bullet}}_{0}(x)+2\int_{0}^{q}\1_{X^{\J^{\bullet}}_{r}=x} dr.
\end{displaymath}

Next, for $q\geq 0$, $\O^{\J^{\bullet}}_{q}$ will denote a function from pairs of neighbor vertices in $\J^{\bullet}$ to $\{0,1\}$. Given 
$x_{1}<x_{2}$ two neighbors in $\J^{\bullet}$, we will say that the edge
$\{x_{1},x_{2}\}$ is \textit{open} (at time $q$) if 
$\O^{\J^{\bullet}}_{q}(\{x_{1},x_{2}\})=1$ and \textit{closed} if
$\O^{\J^{\bullet}}_{q}(\{x_{1},x_{2}\})=0$. 
$\O^{\J^{\bullet}}_{q}(\{x_{1},x_{2}\})$ is defined as follows:
\begin{displaymath}
\O^{\J^{\bullet}}_{0}(\{x_{1},x_{2}\})= \1_{\phi^{(0)}(x)^{2} 
\text{ has no zeroes on } [x_{1},x_{2}]},
\end{displaymath}
\begin{displaymath}
\O^{\J^{\bullet}}_{q}(\{x_{1},x_{2}\})= \1_{\phi^{(0)}(x)^{2}
+2\ell^{\beta}_{(Q^{\J^{\bullet},\beta})^{-1}(q)}(x)
\text{ has no zeroes on } [x_{1},x_{2}]},
\end{displaymath}
By construction, $(\O^{\J^{\bullet}}_{q})_{t\geq 0}$ is a family non-decreasing in $q$.

Next we state that the joint process 
$(X^{\J^{\bullet}}_{q},\lambda^{\J^{\bullet}}_{q},\O^{\J^{\bullet}}_{q})_{q\geq 0}$ is Markovian and give the transitions rates. For details we refer to Theorem 8 in
\cite{LST2017InvRK}.

\begin{prop}
\label{PropDirect}
$(X^{\J^{\bullet}}_{q},\lambda^{\J^{\bullet}}_{q},\O^{\J^{\bullet}}_{q})_{q\geq 0}$ is a Markov process. Let $x_{1}$ and $x_{2}$ be two neighbors in $\J^{\bullet}$. If
$X^{\J^{\bullet}}_{q}=x_{1}$, then:
\begin{itemize}
\item $X^{\J^{\bullet}}_{q}$ jumps to $x_{2}$ with rate
$(2\vert x_{2}-x_{1}\vert)^{-1}$.
$\O^{\J^{\bullet}}_{q}(\{x_{1},x_{2}\})$ is then set to $1$ (if it was not already).
\item In case $\O^{\J^{\bullet}}_{q}(\{x_{1},x_{2}\})=0$, 
$\O^{\J^{\bullet}}_{q}(\{x_{1},x_{2}\})$ is set to $1$
without $X^{\J^{\bullet}}_{q}$ jumping with rate
\begin{equation}
\label{EqR2}
\dfrac{1}{2\vert x_{2}-x_{1}\vert}
\dfrac{\lambda^{\J^{\bullet}}_{q}(x_{2})^{\frac{1}{2}}}{\lambda^{\J^{\bullet}}_{q}(x_{1})^{\frac{1}{2}}}
\exp(-\vert x_{2}-x_{1}\vert^{-1}
\lambda^{\J^{\bullet}}_{q}(x_{1})^{\frac{1}{2}}
\lambda^{\J^{\bullet}}_{q}(x_{2})^{\frac{1}{2}}).
\end{equation}
\end{itemize}
For $x\in \J^{\bullet}$,
\begin{displaymath}
\lambda^{\J^{\bullet}}_{q}(x)=
\lambda^{\J^{\bullet}}_{0}(x)+2\int_{0}^{q}\1_{X^{\J^{\bullet}}_{r}=x} dr.
\end{displaymath}
\end{prop}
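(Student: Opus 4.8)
## Proof proposal for Proposition \ref{PropDirect}

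The plan is to derive the Markov property and the transition rates of the triple $(X^{\J^{\bullet}}_{q},\lambda^{\J^{\bullet}}_{q},\O^{\J^{\bullet}}_{q})_{q\geq 0}$ from the Ray-Knight identity together with known facts about Brownian local times restricted to a discrete set. First I would record that $X^{\J^{\bullet}}_{q}$ alone is, by the time change $Q^{\J^{\bullet},\beta}$, a nearest-neighbor continuous-time Markov jump process on $\J^{\bullet}$ with conductances $C(x_{1},x_{2})=(2|x_{2}-x_{1}|)^{-1}$ — this is the classical fact that a Brownian motion observed through the clock $\sum_{x}\ell^{\beta}_{\cdot}(x)$ becomes the associated random walk, and it already gives the first bullet's jump rate $(2|x_{2}-x_{1}|)^{-1}$. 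The additive functional $\lambda^{\J^{\bullet}}_{q}(x)=\lambda^{\J^{\bullet}}_{0}(x)+2\int_{0}^{q}\1_{X^{\J^{\bullet}}_{r}=x}\,dr$ is then a deterministic functional of the path of $X^{\J^{\bullet}}$ and of the initial data $\lambda^{\J^{\bullet}}_{0}$, so the only genuinely new object is $\O^{\J^{\bullet}}_{q}$, and the content of the proposition is the law of the times at which closed edges open.

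The key step is to analyze, for a closed edge $\{x_{1},x_{2}\}$ with $X^{\J^{\bullet}}$ currently sitting at $x_{1}$, the conditional law — given everything observed so far — of the first excursion of $\beta$ from $x_{1}$ that either reaches $x_{2}$ or creates a zero of $\phi^{(0)}(\cdot)^{2}+2\ell^{\beta}_{\cdot}(\cdot)$ on $(x_{1},x_{2})$. Conditionally on the field $\phi^{(0)}$ and on the accumulated local time, the restriction of $\phi^{(0)}(y)^{2}+2\ell^{\beta}(y)$ to $y\in[x_{1},x_{2}]$ is, before the edge opens, a squared Bessel-type bridge pinned at the current boundary values $\lambda^{\J^{\bullet}}_{q}(x_{1})$ and $\lambda^{\J^{\bullet}}_{q}(x_{2})$ and conditioned to stay positive; an excursion of $\beta$ into $(x_{1},x_{2})$ that returns to $x_{1}$ without touching $x_{2}$ adds to this profile, and the edge opens precisely when such an excursion fails to remain positive. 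Standard excursion theory for linear Brownian motion on $[x_{1},x_{2}]$, combined with the explicit density of a Bessel-$0$ bridge hitting zero (equivalently, the hitting probability formula $\exp(-|x_{2}-x_{1}|^{-1}\lambda^{1/2}(x_{1})\lambda^{1/2}(x_{2}))$ that governs whether a squared-Bessel bridge between these endpoints stays away from $0$), yields that the opening of the edge without a jump occurs at exponential rate exactly \eqref{EqR2}, while the competing event of a jump to $x_{2}$ carries the complementary exponential clock at rate $(2|x_{2}-x_{1}|)^{-1}$. I would present this either by an explicit excursion-measure computation or, more economically, by invoking Theorem 8 of \cite{LST2017InvRK}, which is exactly this statement; since the excerpt explicitly says ``For details we refer to Theorem 8 in \cite{LST2017InvRK}'', the honest approach here is to state that the proposition is a restatement of that theorem specialized to discrete subsets of $\R$ with conductances \eqref{EqConductance}, and to indicate the dictionary between the general network notation there and the present one-dimensional setting.

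Finally I would assemble the Markov property: because the jump rate of $X^{\J^{\bullet}}$, the opening rate \eqref{EqR2}, and the evolution of $\lambda^{\J^{\bullet}}$ all depend only on the current triple $(X^{\J^{\bullet}}_{q},\lambda^{\J^{\bullet}}_{q},\O^{\J^{\bullet}}_{q})$ — the rate \eqref{EqR2} being a function solely of $\lambda^{\J^{\bullet}}_{q}(x_{1})$, $\lambda^{\J^{\bullet}}_{q}(x_{2})$ and $|x_{2}-x_{1}|$, and the set of closed edges being encoded in $\O^{\J^{\bullet}}_{q}$ — the joint process is Markov with the stated generator, and one checks that the two mechanisms (jump with edge-opening; edge-opening without jump) are the only transitions. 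The main obstacle is the excursion-theoretic identification of the rate \eqref{EqR2}, in particular getting the exponential factor $\exp(-|x_{2}-x_{1}|^{-1}\lambda^{\J^{\bullet}}_{q}(x_{1})^{1/2}\lambda^{\J^{\bullet}}_{q}(x_{2})^{1/2})$ correct; this is precisely where the Ray-Knight description of the profile between neighboring lattice points as a conditioned squared Bessel bridge, and the positivity-probability formula for such bridges, must be used, and it is the computation carried out in \cite{LST2017InvRK} to which we defer.
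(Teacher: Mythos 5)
Your proposal takes essentially the same route as the paper: the jump rate $(2\vert x_{2}-x_{1}\vert)^{-1}$ comes from the time change of $\beta$, a crossing of the edge forces it open, and the law of the opening of closed edges is obtained by deferring to \cite{LST2017InvRK} (Section 2, Theorem 8), which is exactly what the paper does. The only differences are minor: the paper extracts from that reference the explicit conditional probability $\exp\big(C(x_{1},x_{2})\lambda^{\J^{\bullet}}_{0}(x_{1})^{\frac{1}{2}}\lambda^{\J^{\bullet}}_{0}(x_{2})^{\frac{1}{2}}-C(x_{1},x_{2})\lambda^{\J^{\bullet}}_{q}(x_{1})^{\frac{1}{2}}\lambda^{\J^{\bullet}}_{q}(x_{2})^{\frac{1}{2}}\big)$ that the edge is still closed given the trajectory and then gets the rate \eqref{EqR2} as the corresponding hazard-rate limit, a short step you gloss over by calling the proposition a restatement of Theorem 8, and your optional excursion/Bessel-bridge sketch is both unnecessary and slightly off as phrased (a closed edge means the profile $\phi^{(0)}(\cdot)^{2}+2\ell^{\beta}(\cdot)$ \emph{does} have a zero on $[x_{1},x_{2}]$, so the relevant event is the filling-in of the remaining zeros, not a positivity conditioning).
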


\begin{proof}
If $X^{\J^{\bullet}}_{q}$ jumps through the edge $\{x_{1},x_{2}\}$, then
$\beta_{t}$ crosses the interval delimited by $x_{1}$ and $x_{2}$, and then the local time of $\beta_{t}$ on this interval is positive, and thus
$\O^{\J^{\bullet}}_{q}(\{x_{1},x_{2}\})=1$ after the jump.

As described in Section 2 in \cite{LST2017InvRK} and in particular in Theorem 8, the conditional probability that
$\O^{\J^{\bullet}}_{q}(\{x_{1},x_{2}\})=0$, given 
$(X^{\J^{\bullet}}_{r},\lambda^{\J^{\bullet}}_{r}(x))
_{x\in\J^{\bullet}, 0\leq r\leq q}$, 
and that
$\O^{\J^{\bullet}}_{0}(\{x_{1},x_{2}\})=0$,
and that
$X^{\J^{\bullet}}_{r}$ has not crossed the edge $\{x_{1},x_{2}\}$ before time $q$, equals
\begin{displaymath}
\exp(C(x_{1},x_{2})\lambda^{\J^{\bullet}}_{0}(x_{1})^{\frac{1}{2}}
\lambda^{\J^{\bullet}}_{0}(x_{2})^{\frac{1}{2}}-C(x_{1},x_{2})
\lambda^{\J^{\bullet}}_{q}(x_{1})^{\frac{1}{2}}
\lambda^{\J^{\bullet}}_{q}(x_{2})^{\frac{1}{2}}),
\end{displaymath}
where $C(x_{1},x_{2})$ is given by \eqref{EqConductance}.
Thus, the rate \eqref{EqR2} is obtained as
\begin{displaymath}
\lim_{\Delta q \to 0^{+}} \dfrac{1}{\Delta q}
\bigg(
1-
\dfrac{\exp(
C(x_{1},x_{2})\lambda^{\J^{\bullet}}_{0}(x_{1})^{\frac{1}{2}}
\lambda^{\J^{\bullet}}_{0}(x_{2})^{\frac{1}{2}}
-C(x_{1},x_{2})
(\lambda^{\J^{\bullet}}_{q}(x_{1})+2\Delta q)^{\frac{1}{2}}
\lambda^{\J^{\bullet}}_{q}(x_{2})^{\frac{1}{2}})}
{\exp(
C(x_{1},x_{2})\lambda^{\J^{\bullet}}_{0}(x_{1})^{\frac{1}{2}}
\lambda^{\J^{\bullet}}_{0}(x_{2})^{\frac{1}{2}}
-C(x_{1},x_{2})
\lambda^{\J^{\bullet}}_{q}(x_{1})^{\frac{1}{2}}
\lambda^{\J^{\bullet}}_{q}(x_{2})^{\frac{1}{2}})}
\bigg).
\qedhere
\end{displaymath}
\end{proof}

The fields $\phi^{(0)}$ and $\phi^{(a)}$ restricted to $\J^{\bullet}$ are discrete Gaussian free fields on $\J^{\bullet}$. The triple
\begin{displaymath}
(\phi^{(0)}(x)^{2},X^{\J^{\bullet}}_{q},
\phi^{(a)}(x)^{2})_{x\in\J^{\bullet}, 0\leq q\leq Q^{\J^{\bullet},\beta}(\tau^{\beta}_{a^{2}/2})}
\end{displaymath}
satisfies the Ray-Knight identity on the discrete network
$\J^{\bullet}$. So in \cite{LST2017InvRK} one can find a procedure inverting this Ray-Knight identity in the discrete setting. It corresponds to a time reversal of the process of Proposition \ref{PropDirect} from stopping time $Q^{\J^{\bullet},\beta}(\tau^{\beta}_{a^{2}/2})$.
This is explained in Section 3 in \cite{LST2017InvRK}, in particular in Proposition 3.4 there.
Note that the introduction of the variables 
$\O^{\J^{\bullet}}_{q}$, as in \cite{LST2017InvRK}, 
allows for a simpler expression of the inversion procedure.
This is related to the fact that
$\phi^{(0)}(x_{1})\phi^{(0)}(x_{2})>0$ 
whenever $\O^{\J^{\bullet}}_{0}(\{x_{1},x_{2}\})=1$.
The inversion procedure that does not keep track of
the variables $\O^{\J^{\bullet}}_{q}$ is presented in
\cite{ST2016InvRK}, and it involves more complicated expressions
with conditional expectations of relative signs.

Let $\J$ be a finite subset of $\R$ containing $0$.
Let us consider the continuous time discrete space self-repelling nearest neighbor jump process on $\J$, which has been introduced in \cite{LST2017InvRK}. Let 
$\check{\lambda}^{\J}_{0}$ be a positive function on 
$\J$. We consider the process 
$(\widecheck{X}_{q}^{\J},
\check{\lambda}_{q}^{\J}(x))
_{x\in \J,q\geq 0}$, where 
$\widecheck{X}_{q}^{\J}$ is a nearest neighbor jump process on 
$\J$, starting from $0$, 
with time-dependent jump rates from $x_{1}$ to a neighbor 
$x_{2}$ in $\J$ given by
\begin{equation}
\label{EqDSRP1}
\dfrac{1}{2\vert x_{2}-x_{1}\vert}
\dfrac{\check{\lambda}_{q}^{\J}
(x_{2})^{\frac{1}{2}}}
{\check{\lambda}_{q}^{\J}
(x_{1})^{\frac{1}{2}}},
\end{equation}
and
\begin{equation}
\label{EqDSRP2}
\check{\lambda}_{q}^{\J}(x)=
\check{\lambda}_{0}^{\J}(x)-2\int_{0}^{q}
\1_{\widecheck{X}_{r}^{\J}=x} dr.
\end{equation}
Let be $\widecheck{\mathcal{Q}}^{\J}$ be a the random time coupled to
$(\widecheck{X}_{q}^{\J},
\check{\lambda}_{q}^{\J}(x))_{x\in \J, q\geq 0}$ in the following way. 
If $\J$ is reduced to $\lbrace 0\rbrace$, then we set
$\widecheck{\mathcal{Q}}^{\J}=0$. Otherwise,
$\widecheck{\mathcal{Q}}^{\J}$ is the first time $q$ when the integral
\begin{equation}
\label{EqDSRP3}
\int_{0}^{q}
\sum_{x\stackrel{\J}{\sim}\widecheck{X}_{r}^{\J}}
\bigg(
\dfrac{
\check{\lambda}_{r}^{\J}(x)^{\frac{1}{2}}}
{
\vert\widecheck{X}_{r}^{\J}-x\vert
\check{\lambda}_{r}^{\J}(\widecheck{X}_{r}^{\J})
^{\frac{1}{2}}}
(\exp(\vert\widecheck{X}_{r}^{\J}-x \vert^{-1}
\check{\lambda}_{r}^{\J}(x)^{\frac{1}{2}}
\check{\lambda}_{r}^{\J}(\widecheck{X}_{r}^{\J})
^{\frac{1}{2}})-1)^{-1}
\bigg) dr
\end{equation}
hits an independent exponential random variable of mean $1$.
The notation $x\stackrel{\J}{\sim}\widecheck{X}_{r}^{\J}$ means that
$x$ is a neighbor of $\widecheck{X}_{r}^{\J}$ in $\J$. We will further explain where the definition of $\widecheck{\mathcal{Q}}^{\J}$ comes from.
Note that a.s. the time $\widecheck{\mathcal{Q}}^{\J}$ fires before one of the $\check{\lambda}_{q}^{\J}(x)$ reaches $0$. This is due to the fact that
\begin{displaymath}
\forall K>0,~~\int_{0}\dfrac{1}{r^{1/2}}
(\exp(K r^{1/2})-1)^{-1} dr = +\infty.
\end{displaymath}

Next we describe the process 
$(\widebreve{X}^{\J^{\bullet}}_{q},\breve{\lambda}^{\J^{\bullet}}_{q},
\widebreve{\O}^{\J^{\bullet}}_{q})_{q\geq 0}$ 
introduced in Section 3.3 in \cite{LST2017InvRK}.
$\widebreve{\O}^{\J^{\bullet}}_{q}$ is a function from pairs of neighbor vertices in $\J^{\bullet}$ to $\{0,1\}$. Given $x_{1}<x_{2}$ two neighbors in $\J^{\bullet}$, we set
\begin{displaymath}
\widebreve{\O}^{\J^{\bullet}}_{0}(\{x_{1},x_{2}\})= 
\1_{\phi^{(a)}(x)^{2} 
\text{ has no zeroes on } [x_{1},x_{2}]}.
\end{displaymath}
$\widebreve{X}^{\J^{\bullet}}_{q}$ is a nearest neighbor jump process on
$\J^{\bullet}$. $\widebreve{X}^{\J^{\bullet}}_{0}=0$.
For $x\in \J^{\bullet}$, 
\begin{displaymath}
\breve{\lambda}^{\J^{\bullet}}_{q}(x)=\phi^{(a)}(x)^{2}
-2\int_{0}^{q}\1_{\widebreve{X}^{\J^{\bullet}}_{r}=x} dr
=\breve{\lambda}^{\J^{\bullet}}_{0}(x)
-2\int_{0}^{q}\1_{\widebreve{X}^{\J^{\bullet}}_{r}=x} dr.
\end{displaymath}
Let $x_{1}$ and $x_{2}$ be two neighbors in 
$\J^{\bullet}$. If $\widebreve{X}^{\J^{\bullet}}_{q}=x_{1}$
and $\widebreve{\O}^{\J^{\bullet}}_{q}(\{x_{1},x_{2}\})=1$, then:
\begin{itemize}
\item $\widebreve{X}^{\J^{\bullet}}_{q}$ jumps to $x_{2}$ with rate
\begin{displaymath}
\dfrac{1}{2\vert x_{2}-x_{1}\vert}
\dfrac{\breve{\lambda}_{q}^{\J^{\bullet}}
(x_{2})^{\frac{1}{2}}}
{\breve{\lambda}_{q}^{\J^{\bullet}}
(x_{1})^{\frac{1}{2}}}.
\end{displaymath}
\item $\widebreve{\O}^{\J^{\bullet}}_{q}(\{x_{1},x_{2}\})$ is set to $0$ with rate
\begin{displaymath}
\dfrac{1}{\vert x_{2}-x_{1}\vert}
\dfrac{\breve{\lambda}_{q}^{\J^{\bullet}}
(x_{2})^{\frac{1}{2}}}
{\breve{\lambda}_{q}^{\J^{\bullet}}
(x_{1})^{\frac{1}{2}}}
\big(\exp(\vert x_{2}-x_{1}\vert^{-1}
\breve{\lambda}_{q}^{\J^{\bullet}}
(x_{1})^{\frac{1}{2}}
\breve{\lambda}_{q}^{\J^{\bullet}}
(x_{2})^{\frac{1}{2}})-1\big)^{-1}.
\end{displaymath}
$\widebreve{X}^{\J^{\bullet}}_{q\geq 0}$ jumps instantaneously jumps to $x_{2}$ or stays in $x_{1}$ depending on which of the two vertices remains connected to $0$ by open edges in $\widebreve{X}^{\J^{\bullet}}_{q}$.
\end{itemize}
The process $(\widebreve{X}^{\J^{\bullet}}_{q},\breve{\lambda}^{\J^{\bullet}}_{q}, \widebreve{\O}^{\J^{\bullet}}_{q})_{q\geq 0}$
is defined up to time
\begin{displaymath}
\widebreve{\mathcal{Q}}^{\J^{\bullet}}=\sup\lbrace q\geq 0\vert
\breve{\lambda}_{q}^{\J^{\bullet}}(\widebreve{X}^{\J^{\bullet}}_{q})>0\rbrace.
\end{displaymath}
By construction, $\widebreve{X}^{\J^{\bullet}}_{q}$ is always in the same connected component induced by open edges in 
$\widebreve{\O}^{\J^{\bullet}}_{q}$ as the vertex $0$.
$(\widebreve{\O}^{\J^{\bullet}}_{q})_{0\leq q 
\leq\widebreve{\mathcal{Q}}^{\J^{\bullet}}}$ 
is a non-increasing family. It is easy to see that
a.s. $\widebreve{X}^{\J^{\bullet}}_{q}(\widebreve{\mathcal{Q}}^{\J^{\bullet}})=0$ and the edges adjacent to $0$ are closed in
$\widebreve{\O}^{\J^{\bullet}}_{\widebreve{\mathcal{Q}}^{\J^{\bullet}}}$.

Let be $\J^{\ast}=\J^{\bullet}\cap I(\phi^{(a)})$, 
$I(\phi^{(a)})$ being as in Theorem \ref{ThmRK}. 
We consider the process 
\\$(\widecheck{X}_{q}^{\J^{\ast}},
\check{\lambda}_{q}^{\J^{\ast}}(x))
_{x\in \J^{\ast},0\leq q\leq 
\widecheck{\mathcal{Q}}^{\J^{\ast}}}$ 
following the definition
\eqref{EqDSRP1}, \eqref{EqDSRP2} and \eqref{EqDSRP3}, with
$\J=\J^{\ast}$, $\widecheck{X}_{0}^{\J^{\ast}}=0$, and
$\widecheck{X}_{q}^{\J^{\ast}}=\phi^{(a)}(x)^{2}$,
$x\in \J^{\ast}$. By construction, 
$\widebreve{X}^{\J^{\bullet}}_{q}$ takes values in $\J^{\ast}$. One can couple
$(\widebreve{X}^{\J^{\bullet}}_{q},\breve{\lambda}^{\J^{\bullet}}_{q}, \widebreve{\O}^{\J^{\bullet}}_{q})_{0\leq q\leq \widebreve{\mathcal{Q}}^{\J^{\bullet}}}$
and
$(\widecheck{X}_{q}^{\J^{\ast}},
\check{\lambda}_{q}^{\J^{\ast}}(x))
_{x\in \J^{\ast},0\leq q\leq 
\widecheck{\mathcal{Q}}^{\J^{\ast}}}$ 
such that on the event $\widecheck{\mathcal{Q}}^{\J^{\ast}}\neq 0$
(i.e. $\J^{\ast}$ not reduced to $\{ 0\}$),
\begin{displaymath}
\widecheck{\mathcal{Q}}^{\J^{\ast}}=
\sup\lbrace q\geq 0\vert
\widebreve{\O}^{\J^{\bullet}}_{q}=\widebreve{\O}^{\J^{\bullet}}_{0}\rbrace,
\end{displaymath}
and
\begin{displaymath}
\forall q\in[0,\widecheck{\mathcal{Q}}^{\J^{\ast}}], 
\widecheck{X}_{q}^{\J^{\ast}}=
\widebreve{X}^{\J^{\bullet}}_{q}.
\end{displaymath}
$\widecheck{\mathcal{Q}}^{\J^{\ast}}$ is the first time $q$ when one more edge of $\J^{\bullet}$ is closed in $\widebreve{\O}^{\J^{\bullet}}_{q}$.
Note that after time $\widecheck{\mathcal{Q}}^{\J^{\ast}}$, the processes
$\widecheck{X}_{q}^{\J^{\ast}}$ and 
$\widebreve{X}^{\J^{\bullet}}_{q}$ do not coincide anymore.

\begin{prop}[Lupu-Sabot-Tarrès \cite{LST2017InvRK}, Proposition 3.4]
\label{PropLST0}
With the notations above, the process
\begin{displaymath}
(\widebreve{X}^{\J^{\bullet}}_{q},\breve{\lambda}^{\J^{\bullet}}_{q}, \widebreve{\O}^{\J^{\bullet}}_{q})_{0\leq q\leq \widebreve{\mathcal{Q}}^{\J^{\bullet}}}
\end{displaymath}
has the same law as the time-reversed process
\begin{displaymath}
(X^{\J^{\bullet}}_{Q^{\J^{\bullet},\beta}(\tau^{\beta}_{a^{2}/2})-q},\lambda^{\J^{\bullet}}_{Q^{\J^{\bullet},\beta}(\tau^{\beta}_{a^{2}/2})-q},\O^{\J^{\bullet}}_{Q^{\J^{\bullet},\beta}(\tau^{\beta}_{a^{2}/2})-q})_{0\leq q\leq Q^{\J^{\bullet},\beta}(\tau^{\beta}_{a^{2}/2})}.
\end{displaymath}
\end{prop}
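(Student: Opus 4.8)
Since Proposition \ref{PropLST0} is exactly Proposition~3.4 of \cite{LST2017InvRK}, the plan is to recall and sketch the reversal argument carried out there. The starting point is that the direct process $(X^{\J^{\bullet}}_{q},\lambda^{\J^{\bullet}}_{q},\O^{\J^{\bullet}}_{q})_{q\geq 0}$ is time-homogeneous Markov by Proposition \ref{PropDirect}, with a countable $X$-component, finitely many relevant $\O$-edges, and a continuous $\lambda$-component that drifts deterministically at rate $2$ at the current site. One first observes that $Q^{\J^{\bullet},\beta}(\tau^{\beta}_{a^{2}/2})$ is the hitting time of the level set $\{\lambda^{\J^{\bullet}}(0)=a^{2}\}$: indeed $\phi^{(0)}(0)=0$, so $\lambda^{\J^{\bullet}}_{q}(0)=2\ell^{\beta}(0)$, which reaches $a^{2}$ precisely at $\tau^{\beta}_{a^{2}/2}$. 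The plan is then to compute the time reversal of this Markov process from that hitting time by the standard Nagasawa-type reversal formula for Markov jump processes: the reversed process is again Markov, its horizon $\widebreve{\mathcal{Q}}^{\J^{\bullet}}$ equals $Q^{\J^{\bullet},\beta}(\tau^{\beta}_{a^{2}/2})$ by construction, and its jump rate from a state $z'$ to $z$ at reversed-time $q$ is $r(z,z')\,\rho_{Q-q}(z)/\rho_{Q-q}(z')$, where $r$ are the forward rates of Proposition \ref{PropDirect} and $\rho_{q}$ is the time-space occupation density of the forward process with respect to the product of counting measure on $(X,\O)$ and Lebesgue measure on the $\lambda$-coordinates, the deterministic $\lambda$-drift entering as a transport term whose sign flips under reversal.

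The core step is the explicit identification of $\rho_{q}$. Conditionally on the network data $(\lambda^{\J^{\bullet}}_{r},\O^{\J^{\bullet}}_{r})_{0\leq r\leq q}$ and on the trajectory of $X^{\J^{\bullet}}$, the field $\phi^{(0)}$ restricted to $\J^{\bullet}$ is a discrete Gaussian free field with an explicit Gaussian density, and the increments $\ell^{\beta}$ in $\lambda^{\J^{\bullet}}_{q}=\phi^{(0)}{}^{2}+2\ell^{\beta}$ obey the Ray--Knight law on the electrical network $\J^{\bullet}$ with conductances \eqref{EqConductance}. I would plug these densities into the forward rates --- the conductance jump rate $(2\vert x_{2}-x_{1}\vert)^{-1}$ and the edge-opening rate \eqref{EqR2} --- and form the ratio $\rho_{Q-q}(z)/\rho_{Q-q}(z')$; the Gaussian prefactors cancel in such a way that the reversed rate of a move along an \emph{open} edge $\{x_{1},x_{2}\}$ becomes $\tfrac{1}{2\vert x_{2}-x_{1}\vert}\breve{\lambda}^{1/2}(x_{2})/\breve{\lambda}^{1/2}(x_{1})$ and the reversed rate of \emph{closing} an edge becomes $\vert x_{2}-x_{1}\vert^{-1}\breve{\lambda}^{1/2}(x_{2})\breve{\lambda}^{1/2}(x_{1})^{-1}\big(\exp(\vert x_{2}-x_{1}\vert^{-1}\breve{\lambda}^{1/2}(x_{1})\breve{\lambda}^{1/2}(x_{2}))-1\big)^{-1}$, i.e.\ exactly the dynamics defining $(\widebreve{X}^{\J^{\bullet}}_{q},\breve{\lambda}^{\J^{\bullet}}_{q},\widebreve{\O}^{\J^{\bullet}}_{q})$, while the $\lambda$-drift reverses to give $\breve{\lambda}^{\J^{\bullet}}_{q}=\breve{\lambda}^{\J^{\bullet}}_{0}-2\int_{0}^{q}\1_{\widebreve{X}^{\J^{\bullet}}_{r}=\cdot}\,dr$.

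It then remains to match initial and terminal data. The second generalized Ray--Knight identity on $\J^{\bullet}$ gives that the terminal occupation profile $(\lambda^{\J^{\bullet}}_{Q}(x))_{x\in\J^{\bullet}}=(\phi^{(0)}(x)^{2}+2\ell^{\beta}_{\tau^{\beta}_{a^{2}/2}}(x))_{x}$ has the law of $(\phi^{(a)}(x)^{2})_{x}=(\breve{\lambda}^{\J^{\bullet}}_{0}(x))_{x}$, and the terminal edge configuration $\O^{\J^{\bullet}}_{Q}$, which records absence of zeros of this profile on each edge, matches $\widebreve{\O}^{\J^{\bullet}}_{0}$; one also checks that at time $Q$ the forward process sits at $0$, so the reversed process starts at $0$. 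Conversely, at forward time $0$ we have $\lambda^{\J^{\bullet}}_{0}(0)=\phi^{(0)}(0)^{2}=0$ with the edges at $0$ closed --- which, read in reversed time, is exactly the a.s.\ description of the terminal state at $\widebreve{\mathcal{Q}}^{\J^{\bullet}}$ --- and the instantaneous relocation of $\widebreve{X}^{\J^{\bullet}}$ when an edge closes corresponds, read forward, to the constraint that $X^{\J^{\bullet}}$ can only open an edge that it then immediately traverses.

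The main obstacle is the rigorous justification and explicit evaluation of the reversal formula in this mixed discrete/continuous state space: one has a nontrivial deterministic drift, a random \emph{terminal} time $Q$ that is itself a hitting time of the process, degenerate instantaneous edge-closing transitions, and a boundary $\{\lambda=0\}$ at which the reversed process is absorbed and near which the occupation density $\rho_{q}$ must be controlled. Making the Nagasawa-type argument fully rigorous in this setting, handling the singular start of the forward process at the set $\{X=0,\lambda(0)=0\}$, is precisely the technical work done in \cite{LST2017InvRK}; here we take the statement as given.
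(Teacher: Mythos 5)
The paper gives no proof of this proposition at all: it is imported verbatim from \cite{LST2017InvRK} (Proposition 3.4), and your proposal, after sketching the time-reversal/occupation-density argument, likewise takes the statement as given from that reference, so you are following essentially the same route as the paper. Your sketch is moreover consistent with what is recorded here (the forward dynamics of Proposition \ref{PropDirect}, the matching of terminal data via the discrete Ray--Knight identity, and the reversed jump and edge-closing rates defining $(\widebreve{X}^{\J^{\bullet}},\breve{\lambda}^{\J^{\bullet}},\widebreve{\O}^{\J^{\bullet}})$), so no gap relative to the paper's treatment.
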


In particular, by considering
$(\widebreve{X}^{\J^{\bullet}}_{q},\breve{\lambda}^{\J^{\bullet}}_{q}, \widebreve{\O}^{\J^{\bullet}}_{q})_{0\leq q\leq \widebreve{\mathcal{Q}}^{\J^{\bullet}}}$
up to time $\widecheck{\mathcal{Q}}^{\J^{\ast}}$ we get the following:

\begin{cor}
\label{PropLST1}
Let $\widecheck{T}^{\J^{\ast},\beta,a}$ be $0$ if
$\J^{\ast}$ is reduced to $\lbrace 0\rbrace$,  and otherwise,
\begin{displaymath}
\widecheck{T}^{\J^{\ast},\beta,a}=
\tau^{\beta}_{a^{2}/2}-
\sup\lbrace t\in [0,\tau^{\beta}_{a^{2}/2}]\vert
\beta_{t}\in(\min\J^{\ast},\max\J^{\ast}),
\phi^{(0)}(\beta_{t})=0~\text{and}~
\forall s\in[0,t),\beta_{s}\neq\beta_{t}\rbrace.
\end{displaymath}
Then, the joint law of
\begin{displaymath}
(\J^{\ast}, \phi^{(a)}(x),
X^{\J^{\bullet}}_{Q^{\J^{\bullet},\beta}(\tau^{\beta}_{a^{2}/2})-q})
_{x\in\J^{\ast}, 0\leq q\leq
Q^{\J^{\bullet},\beta}(\tau^{\beta}_{a^{2}/2})-
Q^{\J^{\bullet},\beta}(\tau^{\beta}_{a^{2}/2}-
\widecheck{T}^{\J^{\ast},\beta, a})
}
\end{displaymath}
is the same as the joint law of
\begin{displaymath}
(\J^{\ast}, \phi^{(a)}(x),\widecheck{X}^{\J^{\ast}}_{q})
_{x\in\J^{\ast}, 0\leq q\leq \widecheck{\mathcal{Q}}^{\J^{\ast}}}.
\end{displaymath}
\end{cor}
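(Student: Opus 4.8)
The plan is to deduce the statement from Proposition \ref{PropLST0} by restricting the equality in law stated there to a suitable random time window. Throughout set $\tau=\tau^{\beta}_{a^{2}/2}$ and $\widehat{Q}=Q^{\J^{\bullet},\beta}(\tau)$. Proposition \ref{PropLST0} gives
\begin{displaymath}
(\widebreve{X}^{\J^{\bullet}}_{q},\breve{\lambda}^{\J^{\bullet}}_{q},\widebreve{\O}^{\J^{\bullet}}_{q})_{0\leq q\leq\widebreve{\mathcal{Q}}^{\J^{\bullet}}}
\stackrel{\mathrm{law}}{=}
(X^{\J^{\bullet}}_{\widehat{Q}-q},\lambda^{\J^{\bullet}}_{\widehat{Q}-q},\O^{\J^{\bullet}}_{\widehat{Q}-q})_{0\leq q\leq\widehat{Q}}.
\end{displaymath}
Here $\breve{\lambda}^{\J^{\bullet}}_{0}=\phi^{(a)}(\cdot)^{2}$ and $\lambda^{\J^{\bullet}}_{\widehat{Q}}=\phi^{(0)}(\cdot)^{2}+2\ell^{\beta}_{\tau}(\cdot)$ have the same law by Theorem \ref{ThmRK}; since $I(\phi^{(a)})$ is the connected component of $0$ in the positivity set of this squared field (on which, as it contains $0$ where $\phi^{(a)}(0)=a>0$, the field is positive, so $\phi^{(a)}=+\sqrt{\phi^{(a)}(\cdot)^{2}}$ there), the pair $(\J^{\ast},(\phi^{(a)}(x))_{x\in\J^{\ast}})$ is a measurable function of $\breve{\lambda}^{\J^{\bullet}}_{0}$, and the same recipe applied to $\lambda^{\J^{\bullet}}_{\widehat{Q}}$ produces the corresponding object on the reversed side; these may therefore be adjoined to both sides of the identity above. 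Recall finally from the discussion preceding the statement that $\widecheck{X}^{\J^{\ast}}_{q}=\widebreve{X}^{\J^{\bullet}}_{q}$ for $0\leq q\leq\widecheck{\mathcal{Q}}^{\J^{\ast}}$ and that $\widecheck{\mathcal{Q}}^{\J^{\ast}}=\sup\{q\geq 0\colon\widebreve{\O}^{\J^{\bullet}}_{q}=\widebreve{\O}^{\J^{\bullet}}_{0}\}$ is the first instant at which the configuration of $\widebreve{\O}^{\J^{\bullet}}$ changes. Consequently $(\widecheck{X}^{\J^{\ast}}_{q})_{0\leq q\leq\widecheck{\mathcal{Q}}^{\J^{\ast}}}$, together with $\J^{\ast}$ and $(\phi^{(a)}(x))_{x\in\J^{\ast}}$, is a functional of the left-hand process alone, and it suffices to identify the image of $\widecheck{\mathcal{Q}}^{\J^{\ast}}$ under the equality in law.

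Next I would translate that stopping rule to the reversed direct side. Since $q\mapsto\O^{\J^{\bullet}}_{q}$ is non-decreasing, the first $q$ with $\O^{\J^{\bullet}}_{\widehat{Q}-q}\neq\O^{\J^{\bullet}}_{\widehat{Q}}$ equals $\widehat{Q}-Q^{\J^{\bullet},\beta}(\sigma)$, where $\sigma$ is the last time $s\leq\tau$ at which some edge-interval $[x_{1},x_{2}]$, $x_{1}\stackrel{\J^{\bullet}}{\sim}x_{2}$, loses its last zero of $\phi^{(0)}(\cdot)^{2}+2\ell^{\beta}_{s}(\cdot)$; such an interval carries a zero exactly when it contains an as-yet-unvisited zero of $\phi^{(0)}$, hence opens precisely when $\beta$ first visits the last of these. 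I would then check that among all edges of $\J^{\bullet}$ only the edges of $\J^{\ast}$ ever open on $[0,\tau]$: since $\phi^{(0)}(\cdot)^{2}+2\ell^{\beta}_{\tau}(\cdot)$ has the law of $\phi^{(a)}(\cdot)^{2}$ and is hence strictly positive on $I(\phi^{(a)})\supseteq[\min\J^{\ast},\max\J^{\ast}]$, every zero of $\phi^{(0)}$ in $[\min\J^{\ast},\max\J^{\ast}]$ is visited before $\tau$, while an edge-interval meeting $\partial I(\phi^{(a)})$ still carries a zero of the field at time $\tau$ and, by monotonicity of $\O^{\J^{\bullet}}$, was closed throughout $[0,\tau]$; and for an edge-interval disjoint from $I(\phi^{(a)})$ not all of its zeros of $\phi^{(0)}$, if any, are visited by time $\tau$ — were they, the field would be positive on it, placing it in the component of $0$ — so its configuration does not change. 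Therefore $\sigma$ is the time $\beta$ visits the last-visited zero of $\phi^{(0)}$ in $[\min\J^{\ast},\max\J^{\ast}]$, and, a.s., no such zero is at an endpoint $\neq 0$ of $\J^{\ast}$ while the visit to $0$ occurs at time $0$, so
\begin{displaymath}
\sigma=\sup\{t\in[0,\tau]\colon\beta_{t}\in(\min\J^{\ast},\max\J^{\ast}),\ \phi^{(0)}(\beta_{t})=0\text{ and }\forall s\in[0,t),\ \beta_{s}\neq\beta_{t}\}=\tau-\widecheck{T}^{\J^{\ast},\beta,a}.
\end{displaymath}
Hence $\widecheck{\mathcal{Q}}^{\J^{\ast}}$ is matched by $\widehat{Q}-Q^{\J^{\bullet},\beta}(\tau-\widecheck{T}^{\J^{\ast},\beta,a})=Q^{\J^{\bullet},\beta}(\tau^{\beta}_{a^{2}/2})-Q^{\J^{\bullet},\beta}(\tau^{\beta}_{a^{2}/2}-\widecheck{T}^{\J^{\ast},\beta,a})$, and restricting the displayed identity to this window gives the corollary. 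The degenerate case $\J^{\ast}=\{0\}$ is immediate: then $\widecheck{\mathcal{Q}}^{\J^{\ast}}=0=\widecheck{T}^{\J^{\ast},\beta,a}$ by definition and both sides are the constant process at $0$.

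I expect the only genuine work to be in the second step: making the correspondence ``last edge of $\J^{\bullet}$ that opens before $\tau$'' $\leftrightarrow$ the excursion formula for $\sigma$ fully precise, i.e. ruling out the contributions of boundary-straddling and outside edges and discarding the null events on which $\phi^{(0)}$ vanishes at a point of $\J^{\ast}$ or on which two relevant first-visit times coincide. The remainder is a direct quotation of Proposition \ref{PropLST0} together with the coupling of $\widecheck{X}^{\J^{\ast}}$ and $\widebreve{X}^{\J^{\bullet}}$ recalled before the statement.
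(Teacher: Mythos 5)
Your argument is correct and is essentially the paper's own proof: the paper likewise deduces the corollary from Proposition \ref{PropLST0} combined with the identity $Q^{\J^{\bullet},\beta}(\tau^{\beta}_{a^{2}/2}-\widecheck{T}^{\J^{\ast},\beta,a})=\inf\lbrace q\geq 0\,\vert\,\O^{\J^{\bullet}}_{q}=\O^{\J^{\bullet}}_{Q^{\J^{\bullet},\beta}(\tau^{\beta}_{a^{2}/2})}\rbrace$ and the coupling of $\widecheck{X}^{\J^{\ast}}$ with $\widebreve{X}^{\J^{\bullet}}$, the only difference being that you spell out the verification of this time identification (which edges open, and when) that the paper merely asserts. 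One cosmetic remark: $(\J^{\ast},\phi^{(a)}\vert_{\J^{\ast}})$ is a measurable functional of the pair $(\breve{\lambda}^{\J^{\bullet}}_{0},\widebreve{\O}^{\J^{\bullet}}_{0})$ rather than of $\breve{\lambda}^{\J^{\bullet}}_{0}$ alone (the lattice values do not see zeroes between consecutive points), but since the $\O$-component is part of the identity in Proposition \ref{PropLST0} this does not affect your argument.
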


\begin{proof}
The identity comes from Proposition \ref{PropLST0} and the fact that,
in case $\J^{\ast}$ is not reduced to $\lbrace 0\rbrace$,
\begin{displaymath}
Q^{\J^{\bullet},\beta}(\tau^{\beta}_{a^{2}/2}-
\widecheck{T}^{\J^{\ast},\beta, a})=
\inf\Big\{q\geq 0\vert
\O^{\J^{\bullet}}_{q}=
\O^{\J^{\bullet}}_{Q^{\J^{\bullet},\beta}(\tau^{\beta}_{a^{2}/2})}
\Big\},
\end{displaymath}
and
\begin{displaymath}
Q^{\J^{\bullet},\beta}(\tau^{\beta}_{a^{2}/2})-
Q^{\J^{\bullet},\beta}(\tau^{\beta}_{a^{2}/2}-
\widecheck{T}^{\J^{\ast},\beta, a})=
\sup\Big\{q\in [0,Q^{\J^{\bullet},\beta}(\tau^{\beta}_{a^{2}/2})]\vert
\O^{\J^{\bullet}}_{Q^{\J^{\bullet},\beta}(\tau^{\beta}_{a^{2}/2})-q}=
\O^{\J^{\bullet}}_{Q^{\J^{\bullet},\beta}(\tau^{\beta}_{a^{2}/2})}
\Big\}.
\qedhere
\end{displaymath}
\end{proof}

\section{Convergence for squared GFF initial occupation profile}
\label{SecBBGFF}

We use the notations of the previous section.
First we will check that the condition \eqref{EqCond} is satisfied
by $\check{\lambda}_{0}^{\ast}(x)=\phi^{(a)}(x)^{2}$.

\begin{lemma}
A.s. we have that
\begin{displaymath}
\int_{\inf I(\phi^{(a)})}\phi^{(a)}(x)^{-2} dx = +\infty,
\qquad
\int^{\sup I(\phi^{(a)})}\phi^{(a)}(x)^{-2} dx = +\infty.
\end{displaymath}
\end{lemma}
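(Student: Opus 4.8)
The plan is to reduce the two displayed integrals to a single one‑sided claim about Brownian motion and then prove that claim by applying It\^o's formula to the logarithm of the Brownian path.

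First I would set $W_{x}:=\phi^{(a)}(x)/\sqrt{2}$ for $x\geq 0$; by definition $(W_{x})_{x\geq 0}$ is a standard Brownian motion started from $c:=a/\sqrt{2}>0$. Since one‑dimensional Brownian motion a.s.\ hits $0$, the first zero $b:=\inf\{x>0\,\vert\,W_{x}=0\}$ is finite a.s., $W$ is continuous with $W_{x}>0$ on $[0,b)$ and $W_{b}=0$, and $b=\sup I(\phi^{(a)})$. As $\phi^{(a)}(x)^{-2}=\tfrac12 W_{x}^{-2}$, the second integral in the statement equals $\tfrac12\int_{0}^{b}W_{x}^{-2}\,dx$, so it suffices to prove $\int_{0}^{b}W_{x}^{-2}\,dx=+\infty$ a.s. The integral at $\inf I(\phi^{(a)})$ is treated in exactly the same way using the independent Brownian motion $(\phi^{(a)}(-x)/\sqrt{2})_{x\geq 0}$, so I only discuss the right endpoint.

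For $t\in[0,b)$ the path stays strictly positive, so It\^o's formula gives
\[
\log W_{t}=\log c+M_{t}-\tfrac12\int_{0}^{t}W_{x}^{-2}\,dx,\qquad M_{t}:=\int_{0}^{t}W_{x}^{-1}\,dW_{x},
\]
where $(M_{t})_{0\leq t<b}$ is a continuous local martingale with $\langle M\rangle_{t}=\int_{0}^{t}W_{x}^{-2}\,dx$. As $t\uparrow b$ one has $W_{t}\to W_{b}=0$ with $W_{t}>0$, hence $\log W_{t}\to-\infty$ a.s. I would then argue by contradiction: suppose the event $E:=\{\int_{0}^{b}W_{x}^{-2}\,dx<+\infty\}$ has positive probability. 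On $E$, $\langle M\rangle_{b^{-}}=\int_{0}^{b}W_{x}^{-2}\,dx<+\infty$; localizing along $\tau_{n}:=\inf\{t\geq 0\,\vert\,\langle M\rangle_{t}\geq n\}$, the stopped process $M^{\tau_{n}}$ is an $L^{2}$‑bounded martingale, and on $\{\langle M\rangle_{b^{-}}<n\}$ one has $M_{t}=M^{\tau_{n}}_{t}$ for every $t<b$, so $M_{t}$ has a finite limit as $t\uparrow b$. Letting $n\to\infty$ shows $M_{t}$ converges to a finite limit on all of $E$. But then, on $E$, the right‑hand side of the displayed identity converges to a finite limit as $t\uparrow b$, while the left‑hand side tends to $-\infty$ — a contradiction. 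Hence $\mathbb{P}(E)=0$, which is the claim.

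The only point requiring any care is the convergence of $M_{t}$ on $E$ (the standard fact that a continuous local martingale of finite total quadratic variation converges a.s.); everything else is routine, including the identification $b=\sup I(\phi^{(a)})$ and the observation that $\int_{0}^{b-\delta}W_{x}^{-2}\,dx<\infty$ for $\delta>0$, so that the divergence genuinely takes place at the endpoint. An alternative route would be Williams' time reversal of $W$ from $b$ to a $\mathrm{BES}(3)$ process $\rho$ started from $0$, reducing the claim to $\int_{0}^{\delta}\rho_{s}^{-2}\,ds=+\infty$ a.s.; but I expect the direct argument above to be both shorter and more transparent.
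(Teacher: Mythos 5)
Your argument is correct, and it takes a genuinely different route from the paper. The paper works from the endpoint inwards: conditionally on $\inf I(\phi^{(a)})$ (resp. $\sup I(\phi^{(a)})$), the field near that endpoint is absolutely continuous with respect to a Bessel~$3$ process $\rho$ started from $0$, and the divergence of $\int_{0}\rho(x)^{-2}\,dx$ is then obtained softly by cutting the integral at the dyadic hitting levels $\chi^{\rho}_{2^{-k}}$ and using the strong Markov property plus Brownian scaling to see a sum of i.i.d.\ strictly positive terms. You instead work directly with the Brownian motion $W_{x}=\phi^{(a)}(x)/\sqrt{2}$ up to its first zero $b$: It\^o's formula gives $\log W_{t}=\log c+M_{t}-\tfrac12\langle M\rangle_{t}$ with $\langle M\rangle_{t}=\int_{0}^{t}W_{x}^{-2}\,dx$, and since $\log W_{t}\to-\infty$ as $t\uparrow b$ while a continuous local martingale converges to a finite limit a.s.\ on $\{\langle M\rangle_{b^-}<\infty\}$, that event must be null. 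The one step you rightly flag — convergence of $M_{t}$ on the finite-bracket event, with $M$ defined only on the stochastic interval $[0,b)$ — is standard and can be settled either by your localization (extending the stopped integral to a true $L^{2}$-bounded martingale on $[0,\infty)$) or by a Dambis--Dubins--Schwarz time change, so there is no gap. Comparing the two: your proof is self-contained stochastic calculus and avoids invoking the absolute-continuity/time-reversal fact that the paper uses without proof, at the cost of the martingale-convergence lemma; the paper's proof is softer (no stochastic integration) and isolates the Bessel~$3$ behaviour at the endpoint, which is the structural reason for the divergence. Both are valid proofs of the lemma, including the identification $b=\sup I(\phi^{(a)})$ and the symmetric treatment of the left endpoint via the independent Brownian motion $(\phi^{(a)}(-x)/\sqrt{2})_{x\geq0}$.
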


\begin{proof}
Conditional on $\inf I(\phi^{(a)})$, 
$(\phi^{(a)}(\inf I(\phi^{(a)})+x)/\sqrt{2})
_{0\leq x\leq \vert\inf I(\phi^{(a)})\vert/2}$
is absolutely continuous with respect to a Bessel $3$ process 
starting from $0$. 
So we only need to check that given $(\rho(x))_{x\geq 0}$
a Bessel 3 process starting from $0$, 
\begin{displaymath}
\int_{0}\rho(x)^{-2} dx = +\infty.
\end{displaymath}
For $h>0$, let $\chi_{h}^{\rho}$ denote the first ``time" $x$ when
$\rho(x)$ reaches the level $h$. Then,
\begin{displaymath}
\int_{0}^{\chi_{1}^{\rho}}\rho(x)^{-2} dx=
\sum_{k\geq 0}
\int_{\chi_{2^{-k-1}}^{\rho}}^{\chi_{2^{-k}}^{\rho}}\rho(x)^{-2} dx.
\end{displaymath}
By the strong Markov property of $\rho$, the sum on the right-hand side is a sum of positive independent terms. Moreover, by Brownian scaling satisfied by $\rho$, these terms are identically distributed. So the sum is a.s. infinite.
\end{proof}

Now we consider $\J^{\bullet}=\Z_{n}=2^{-n}\Z$, and
$\J^{\ast}=\Z^{\ast}_{n}=\Z_{n}\cap I(\phi^{(a)})$.
Let
\begin{displaymath}
X^{(n)}_{t}=\beta_{(Q^{\Z_{n},\beta})^{-1}(2^{n}t)}.
\end{displaymath}

\begin{lemma}
\label{LemEmb}
The process 
\begin{multline}
\label{E3}
(X^{(n)}
_{(2^{-n}Q^{\Z_{n},\beta}(\tau^{\beta}_{a^{2}/2})-t)},
\phi^{(a)}(x)^{2}
-2\ell^{\beta}_{\tau^{\beta}_{a^{2}/2}}(x)
+2\ell^{\beta}
_{(Q^{\Z_{n},\beta})^{-1}(Q^{\Z_{n},\beta}(\tau^{\beta}_{a^{2}/2})
-2^{n}t)}(x))
\\
x\in\Z^{\ast}_{n},
0\leq t\leq (2^{-n}Q^{\Z_{n},\beta}(\tau^{\beta}_{a^{2}/2})-
2^{-n}Q^{\Z_{n},\beta}(\tau^{\beta}_{a^{2}/2}-
\widecheck{T}^{\Z^{\ast}_{n},\beta,a})),
\end{multline}
interpolated linearly outside $x\in\Z^{\ast}_{n}$,
converges a.s. in the uniform topology to
\begin{equation}
\label{E4}
(\beta_{\tau^{\beta}_{a^{2}/2}-t},
\phi^{(a)}(x)^{2}
-2\ell^{\beta}_{\tau^{\beta}_{a^{2}/2}}(x)
+2\ell^{\beta}_{\tau^{\beta}_{a^{2}/2}-t}(x))_{
x\in I(\phi^{(a)}),0\leq t\leq \widecheck{T}^{\beta,a}}
\end{equation}
as $n\to +\infty$.
\end{lemma}

\begin{proof}
One needs to show that, on one hand, as $n\to +\infty$, a.s.
$(Q^{\Z_{n},\beta})^{-1}(2^{n}t)$ converges to $t$ uniformly
on $[0,\tau^{\beta}_{a^{2}/2}]$, and on the other hand
$\widecheck{T}^{\Z^{\ast}_{n},\beta,a}$ converges a.s. to
$\widecheck{T}^{\beta,a}$.

The first convergence comes from the fact that
\begin{displaymath}
2^{-n}\sum_{x\in 2^{-n}\Z}\ell^{\beta}_{t}(x)
\end{displaymath}
converges to
\begin{displaymath}
t=\int_{\R}\ell^{\beta}_{t}(x) dx
\end{displaymath}
uniformly on compact intervals of time.

The second convergence comes from the fact that
$(\min \Z^{\ast}_{n}, \max \Z^{\ast}_{n})$ is a non-decreasing sequence of intervals converging to $I(\phi^{(a)})$, and thus, a.s., for $n$ large enough, 
$\widecheck{T}^{\beta,a}=\widecheck{T}^{\Z^{\ast}_{n},\beta,a}$.
\end{proof}

Let $\widecheck{X}^{\ast(n)}_{t}=\widecheck{X}^{\Z^{\ast}_{n}}_{2^{n}t}$ be the self-repelling jump process on $\Z^{\ast}_{n}$, accelerated by the factor $2^{n}$. It is the same process as in Theorem \ref{ThmSIConv}, but with a random initial occupation profile
\begin{displaymath}
\check{\lambda}^{\ast(n)}_{0}(x)=
\check{\lambda}^{\Z^{\ast}_{n}}_{0}(x)=
\phi^{(a)}(x)^{2}, x\in\Z^{\ast}_{n}.
\end{displaymath}
We will show that $\widecheck{X}^{\ast(n)}_{t}$ converges in law as 
$n\to +\infty$ to our self-repelling diffusion. For this we will use a method that appears in \cite{LST2018LRM}, and construct a discrete analogue of the divergent Bass-Burdzy flow.

\begin{prop}
\label{PropDiscrBB}
Given $\phi^{(a)}$, let $\widecheck{X}^{\ast(n)}_{t}$ be the process on 
$\Z^{\ast}_{n}$ defined above, and
\begin{displaymath}
\check{\lambda}^{\ast(n)}_{t}(x)=
\phi^{(a)}(x)^{2}-2^{n+1}\int_{0}^{t} 
\1_{\widecheck{X}^{\ast (n)}_{s}=x} ds,~~
x\in\Z^{\ast}_{n}.
\end{displaymath}
Then, as $n\to +\infty$, the process
\begin{equation}
\label{EqDiscr2}
(\widecheck{X}^{\ast(n)}
_{t\wedge 2^{-n}\widecheck{\mathcal{Q}}^{\Z^{\ast}_{n}}},
\check{\lambda}^{\ast(n)}
_{t\wedge 2^{-n}\widecheck{\mathcal{Q}}^{\Z^{\ast}_{n}}}(x))
_{x\in\Z^{\ast}_{n},t\geq 0},
\end{equation}
interpolated linearly outside $x\in\Z^{\ast}_{n}$, converges in law to the self repelling diffusion
\begin{displaymath}
(\widecheck{X}^{\ast}_{t\wedge\widecheck{T}^{\ast}},
\check{\lambda}^{\ast}_{t\wedge\widecheck{T}^{\ast}}(x))
_{x\in I(\phi^{(a)}),t\geq 0}
\end{displaymath}
with $\widecheck{X}^{\ast}_{0}=0$ and the initial occupation profile
$\check{\lambda}^{\ast}_{0}(x)=\phi^{(a)}(x)^{2}$.
\end{prop}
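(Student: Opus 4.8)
The plan is to prove Proposition \ref{PropDiscrBB} by constructing, for each $n$, a discrete analogue of the divergent Bass--Burdzy flow \eqref{EqBB} directly out of the jump process $\widecheck{X}^{\ast(n)}_{t}$, showing that these discrete flows converge to the genuine flow $(\widecheck{\Psi}_{u})_{u\geq 0}$, and then reading off the convergence of the processes themselves from Definition \ref{DefMain}. Concretely, I would first introduce on $\Z^{\ast}_{n}$ the discrete scale function $\widecheck{S}^{(n)}_{0}(x)=\sum 2^{-n}\check{\lambda}_{0}^{\ast(n)}(\,\cdot\,)^{-1}$ (a Riemann sum for $\widecheck{S}_{0}$) and the discrete time change $du = \check{\lambda}^{\ast(n)}_{t}(\widecheck{X}^{\ast(n)}_{t})^{2}\,dt$, mimicking the heuristic of \eqref{EqHeuristic}. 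Under this time change the image point $\widecheck{S}^{(n)}_{u}(\widecheck{X}^{\ast(n)})$ should be (asymptotically) a Brownian motion $B^{(n)}_{u}$, and the time-dependent scale functions $\widecheck{S}^{(n)}_{u}$ should satisfy a discretized version of the Bass--Burdzy ODE: between jumps of $B^{(n)}$ across a fixed pair $\{x_1,x_2\}$ the increment $\widecheck{S}^{(n)}_{u}(x_2)-\widecheck{S}^{(n)}_{u}(x_1)$ changes at rate $\pm 2$ according to the sign of $\widecheck{S}^{(n)}_{u}(x)-B^{(n)}_{u}$, exactly as in the computation of \eqref{EqHeuristic}. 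The local-time identity $\partial_y\widecheck{\Psi}_u(y)=1+2\widecheck{\Lambda}_u(y)$ has the transparent discrete counterpart $\widecheck{S}^{(n)}_{u}(x+2^{-n})-\widecheck{S}^{(n)}_{u}(x)=2^{-n}\check{\lambda}^{\ast(n)}_{0}(x)^{-1}\bigl(1+2\widecheck{\Lambda}^{(n)}_{u}(x)\bigr)$ with $\widecheck{\Lambda}^{(n)}_{u}(x)$ the discrete occupation of the (rescaled) image walk.

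Next I would prove the convergence of these discrete objects to the Bass--Burdzy flow. The key input is the a priori control from the Brownian embedding: by Lemma \ref{LemEmb}, $\widecheck{X}^{\ast(n)}$ is (a time change of) the time-reversed Brownian path $\beta_{\tau^{\beta}_{a^2/2}-t}$ restricted to $\Z^{\ast}_n$, so the image walk $B^{(n)}_{u}$ converges a.s.\ uniformly to an honest Brownian motion $(B_u)_{u\ge 0}$, and its discrete local times $\widecheck{\Lambda}^{(n)}_{u}(\cdot)$ converge a.s.\ uniformly on compacts to the Brownian local time, hence to $\widecheck{\Lambda}_u$. Then I would invoke the stability of solutions of the Bass--Burdzy equation under uniform perturbation of the driving path --- this is where the results of \cite{BassBurdzy99StochBiff,HuWarren00BBFlow,Attanasio10DiscDrift} on continuity of the flow are used --- to conclude $\widecheck{S}^{(n)}_{u}\to\widecheck{\Psi}_u$ (after composition with the deterministic convergence $\widecheck{S}^{(n)}_0\to\widecheck{S}_0$, $(\widecheck{S}^{(n)}_0)^{-1}\to\widecheck{S}_0^{-1}$), uniformly on compact sets of $(u,y)$, and correspondingly $\check{\xi}^{(n)}_u\to\check{\xi}_u$. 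From here the time change \eqref{EqChgTime} converges because $\check{\lambda}_{0}^{\ast(n)}\to\check{\lambda}^{\ast}_0=\phi^{(a)}{}^2$ and $\widecheck{\Lambda}^{(n)}_u(\check{\xi}^{(n)}_u)\to\widecheck{\Lambda}_u(\check{\xi}_u)$, so $\widecheck{X}^{\ast(n)}_t=(\widecheck{S}^{(n)}_0)^{-1}(\check{\xi}^{(n)}_{u(t)})\to\widecheck{S}_0^{-1}(\check{\xi}_{u(t)})=\widecheck{X}^{\ast}_t$, and likewise for the occupation profiles $\check{\lambda}^{\ast(n)}_t$, uniformly on compacts up to the relevant cutoffs. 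Finally I would check that the random cutoff $2^{-n}\widecheck{\mathcal{Q}}^{\Z^{\ast}_n}$ corresponds, in the limit, to $\widecheck{T}^{\ast}$: the definition \eqref{EqDSRP3} of $\widecheck{\mathcal{Q}}^{\J}$ was designed precisely so that $\widecheck{\mathcal{Q}}^{\Z^{\ast}_n}$ marks the first closing of an edge, which by Corollary \ref{PropLST1} corresponds to the last visit of $\beta$ to a zero of $\phi^{(0)}$, i.e.\ to $\widecheck{T}^{\beta,a}$; combined with Lemma \ref{LemEmb} this pins down the limit as $\widecheck{T}^{\ast}$ and simultaneously yields Theorem \ref{ThmRKInv}.

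The main obstacle I anticipate is twofold and lies in the passage $B^{(n)}\to B$ \emph{together with} the flow convergence. First, the driving Brownian motion $B^{(n)}_u$ for the discrete flow is not literally the embedding Brownian motion of Lemma \ref{LemEmb}; one must verify that the discrete time change $du=\check{\lambda}^{\ast(n)}_t(\widecheck{X}^{\ast(n)}_t)^2\,dt$ indeed turns the image of the scaled walk into something that converges uniformly to a Brownian motion, and that this limiting Brownian motion is the \emph{same} one (up to the deterministic scale/time changes) that governs the limiting diffusion of Definition \ref{DefMain} --- i.e.\ the coupling has to be threaded through consistently so the convergence is in the strong (a.s., after coupling) sense asserted, not merely in law of the marginals. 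Second, and more delicately, one needs uniform control of $\check{\lambda}^{\ast(n)}_{t}(x+2^{-n})/\check{\lambda}^{\ast(n)}_t(x)\to 1$ on compact subsets of $I(\phi^{(a)})\times[0,\cdot]$ up to the cutoff, so that the discrete drift genuinely looks like a discretization of $\tfrac12\partial_x\log\check{\lambda}^{\ast}_t$; as noted in the introduction, this ratio control is exactly what the Brownian embedding buys us (since there the occupation profile is a difference of Brownian local times, uniformly continuous in space), but making the estimate uniform up to the \emph{random} stopping time $\widecheck{T}^{\ast}_{\varepsilon}$ rather than merely on a fixed interval, and carrying it through the inversion of the scale map $\widecheck{S}^{(n)}_0$ near the (random) endpoints of $I(\phi^{(a)})$, is the technically heaviest point. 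Everything else --- tightness, identification of subsequential limits, the routine Riemann-sum convergences $2^{-n}\sum \ell^{\beta}_t(x)\to t$ already recorded in Lemma \ref{LemEmb} --- is comparatively mechanical.
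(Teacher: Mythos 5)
Your overall architecture is the paper's: time-dependent discrete scale functions, a discrete analogue of the divergent Bass--Burdzy flow, the ratio control $\check{\lambda}^{\ast(n)}_{t}(x+2^{-n})/\check{\lambda}^{\ast(n)}_{t}(x)\to 1$ furnished by the Brownian embedding, and identification of the limit through the construction of Definition \ref{DefMain}. But there is a genuine gap at the very step you flag as the ``main obstacle'' and then pass over: you assert that, by Lemma \ref{LemEmb}, the image walk $B^{(n)}_{u}=\widecheck{S}^{(n)}_{u}(\widecheck{X}^{\ast(n)})$ converges a.s.\ uniformly to \emph{an honest Brownian motion}. Lemma \ref{LemEmb} gives a.s.\ convergence of the reversed path and of the occupation profiles, hence (together with the a.s.\ convergence of the scale maps and time changes) a.s.\ convergence of $B^{(n)}$ to \emph{some} continuous limit, namely $\bar{S}^{\ast}_{U^{-1}(u)}(\beta_{\tau^{\beta}_{a^{2}/2}-U^{-1}(u)})$; but nothing in that lemma tells you that this limit is a Brownian motion. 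Knowing that the time-reversed, scale- and time-changed path is Brownian is essentially the content of the identity being proved (it is the heuristic \eqref{EqHeuristic}), so as written your argument is circular. The paper avoids this by proving directly from the jump rates that $M^{(n)}_{t}=\widecheck{S}^{\ast(n)}_{t}(\widecheck{X}^{\ast(n)}_{t})$ is a local martingale (Lemma \ref{LemClaim2}), normalizing its quadratic variation by the time change $U^{(n)}$ (Lemmas \ref{LemClaim2bis} and \ref{LemClaim3}), and applying the martingale functional CLT of Ethier--Kurtz to obtain convergence \emph{in law} of $\widetilde{Z}^{(n,\varepsilon)}$ to a Brownian motion independent of $\phi^{(a)}$ (Lemma \ref{LemClaim4}); the rest of the proof is then carried out at the level of joint convergence in law of a large family (the embedding quantities, which do converge a.s., together with $\widetilde{Z}^{(n,\varepsilon)}$ and the discrete flows), the a.s./same-probability-space identification being available only \emph{a posteriori}, as the paper explicitly stresses.

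A second, smaller, mismatch: you propose to conclude by ``stability of the Bass--Burdzy equation under uniform perturbation of the driving path''. The discrete flows are not exact Bass--Burdzy flows driven by a perturbed path: before the cutoff their $u$-derivative is not $\pm 1$ but a ratio of the form appearing in Lemma \ref{LemClaim6}, which only tends to $\pm 1$ thanks to the ratio control. So no stability theorem can be quoted off the shelf; the paper instead proves tightness, uses the bounds of Lemma \ref{LemClaim6} together with the uniform convergence of the ratios to show that every subsequential limit solves \eqref{EqLim1}, and then invokes the uniqueness result of \cite{BassBurdzy99StochBiff} (Theorem 2.3 there) to identify the limit as the divergent Bass--Burdzy flow (Lemma \ref{LemClaim7}). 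With these two repairs --- the martingale CLT in place of the unproven a.s.\ Brownian identification, and the subsequential-limit-plus-uniqueness argument in place of a stability theorem --- your plan becomes the paper's proof, including the final passage $\varepsilon\to 0$ and the matching of the cutoff $2^{-n}\widecheck{\mathcal{Q}}^{\Z^{\ast}_{n}}$ with $\widecheck{T}^{\ast}$, which you describe correctly.
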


Before proceeding to the proof of Proposition \ref{PropDiscrBB},
let us explain how it implies Theorem \ref{ThmRKInv}.

\begin{proof}[Proof of Theorem \ref{ThmRKInv}]
On one hand, according to Proposition \ref{PropDiscrBB},
\begin{equation}
\label{E1}
(\widecheck{X}^{\ast(n)}_{t},
\check{\lambda}^{\ast(n)}_{t}(x))
_{x\in\Z^{\ast}_{n},0\leq t
\leq 2^{-n}\widecheck{\mathcal{Q}}^{\Z^{\ast}_{n}}}
\end{equation}
converges in law to
\begin{equation}
\label{E2}
(\widecheck{X}^{\ast}_{t},
\check{\lambda}^{\ast}_{t}(x))
_{x\in I(\phi^{(a)}),0\leq t\leq \widecheck{T}^{\ast}} .
\end{equation}
On the other hand, according to Corollary \ref{PropLST1},
\eqref{E1} has the same distribution as \eqref{E3}.
According to Lemma \ref{LemEmb},
\eqref{E3} in turn converges a.s. to \eqref{E4}.
This means that \eqref{E2} has the same distribution as \eqref{E4},
which is exactly what we want.
\end{proof}

\begin{proof}[Proof of Proposition \ref{PropDiscrBB}]
From Corollary \ref{PropLST1} and Lemma \ref{LemEmb} we already know that the process \eqref{EqDiscr2} has a limit in law, but we want another description of the limit, which we will obtain by convergence.
We will need the triple 
\begin{displaymath}
(\phi^{(0)}(x)^{2},\beta_{t},
\phi^{(a)}(x)^{2})_{x\in\R, 0\leq t\leq \tau^{\beta}_{a^{2}/2}},
\end{displaymath}
jointly distributed as in the Ray-Knight coupling 
(Definition \ref{Def RK coupl}).
We will also assume that all of the $\widecheck{X}^{\ast(n)}_{t}$
are defined on the same probability spaces, embedded in
$\beta_{t}$ as in Corollary \ref{PropLST1}.

We introduce 
$(\widecheck{S}_{t}^{\ast(n)})
_{0\leq t\leq 2^{-n}\widecheck{\mathcal{Q}}^{\Z_{n}^{\ast}}}$ 
a family of maps
$\R\rightarrow\R$, parametrized by $t$. For a given $n$, the family
is characterized by the following:
\begin{itemize}
\item For all $x$ such that 
$x$ and $x+2^{-n}$ are in $\Z^{\ast}_{n}$, and for all
$t\in[0,2^{-n}\widecheck{\mathcal{Q}}^{\Z_{n}^{\ast}}]$,
\begin{displaymath}
\widecheck{S}_{t}^{\ast(n)}(x+2^{-n})-
\widecheck{S}_{t}^{\ast(n)}(x)=
2^{-n}
\check{\lambda}^{\ast (n)}_{t}(x)^{-\frac{1}{2}}
\check{\lambda}^{\ast (n)}_{t}(x+2^{-n})^{-\frac{1}{2}}.
\end{displaymath}
\item $\widecheck{S}_{0}^{\ast(n)}(0)=0$.
\item For every $x\in\Z^{\ast}_{n}$,
$t\mapsto\widecheck{S}_{t}^{\ast(n)}(x)$ is constant on each time interval on which
$\widecheck{X}^{\ast (n)}_{t}=x$.
\item For each $x\in\Z^{\ast}_{n}$,
$t\mapsto \widecheck{S}_{t}^{\ast(n)}(x)$ is continuous.
\item For each $t$, $\widecheck{S}_{t}^{\ast(n)}$ is interpolated linearly between points of $\Z^{\ast}_{n}$.
\item Below $\min \Z^{\ast}_{n}$ and above
$\max \Z^{\ast}_{n}$,
$x\mapsto \widecheck{S}_{t}^{\ast(n)}(x)$ has constant slope $1$.
\end{itemize}
By construction, $x\mapsto \widecheck{S}_{t}^{\ast(n)}$ is continuous strictly increasing.
We see $\widecheck{S}_{t}^{\ast(n)}$ as a time-dependent change of scale.
It has been constructed in such a way that
the process
$(\widecheck{S}_{t}^{\ast(n)}(\widecheck{X}^{\ast (n)}_{t}))_{t}$
is a local martingale;
see Lemma \ref{LemClaim2} further below.

For $x\in I(\phi^{(a)})$ and
$t\in [0,\tau^{\beta}_{a^{2}/2})$, set
\begin{displaymath}
\bar{S}^{\ast}_{t}(x)=\int^{x}_{0}
(\phi^{(a)}(r)^{2}-2\ell^{\beta}_{\tau^{\beta}_{a^{2}/2}}
+2\ell^{\beta}_{\tau^{\beta}_{a^{2}/2}-t})^{-1} dr.
\end{displaymath}
$x\mapsto \bar{S}^{\ast}_{t}(x)$ is an increasing diffeomorphism from $I(\phi^{(a)})$ to $\R$. Clearly, we have the following

\begin{lemma}
\label{LemClaim1}

A.s.
$\widecheck{S}_{t}^{\ast(n)}(x)-\widecheck{S}_{t}^{\ast(n)}(0)$
converges to
$\bar{S}^{\ast}_{t}(x)-\bar{S}^{\ast}_{t}(0)$
uniformly for $(x,t)$ in compact subsets of
$I(\phi^{(a)})\times [0,\tau^{\beta}_{a^{2}/2})$.
Similarly, a.s.
$(y,t)\mapsto (\widecheck{S}_{t}^{\ast(n)})^{-1}
(y+\widecheck{S}_{t}^{\ast(n)}(0))$ converges to
$(y,t)\mapsto (\bar{S}_{t}^{\ast})^{-1}
(y+\bar{S}_{t}^{\ast}(0))$
uniformly on compact subsets of
$\R\times [0,\tau^{\beta}_{a^{2}/2})$.
\end{lemma}

Let be 
$M^{(n)}_{t}=\widecheck{S}_{t}^{\ast(n)}(\widecheck{X}^{\ast (n)}_{t})$.
Let $t^{\ast(n)}_{\partial\Z^{\ast}_{n}}$ be the first
time $\widecheck{X}^{\ast (n)}_{t}$ hits
$\min \Z^{\ast}_{n}$ or $\max \Z^{\ast}_{n}$.
We introduce the stopping time 
$t^{\ast(n)}_{\partial\Z^{\ast}_{n}}$
to avoid considering what happens after
$\widecheck{X}^{\ast (n)}_{t}$ hits the boundary of
the domain $\Z^{\ast}_{n}$.

\bigskip

\begin{lemma} 
\label{LemClaim2}

The process
$(M^{(n)}_{t\wedge 
t^{\ast(n)}_{\partial\Z^{\ast}_{n}}
\wedge 2^{-n}\widecheck{\mathcal{Q}}^{\Z_{n}^{\ast}}})
_{t\geq 0}$ is a local martingale in the filtration of
\\$(\phi^{(a)},\widecheck{X}^{\ast (n)}
_{t\wedge 2^{-n}\widecheck{\mathcal{Q}}^{\Z_{n}^{\ast}}},
t^{\ast(n)}_{\partial\Z^{\ast}_{n}}
\1_{t^{\ast(n)}_{\partial\Z^{\ast}_{n}}\leq t}
)$.
\end{lemma}

\begin{proof}
Indeed, consider the following stopping times for the above filtration:
$t^{\ast(n)}_{k}$ the first time $\widecheck{X}^{\ast (n)}_{t}$ performs $k$ jumps, and
\begin{equation}
\label{EqTneps}
\widecheck{T}^{\ast (n)}_{\varepsilon}=
\sup\lbrace t\geq \vert 
\check{\lambda}^{\ast (n)}_{t}(\widecheck{X}^{\ast (n)}_{t})
>\varepsilon
\rbrace.
\end{equation}
Then 
$\vert M^{(n)}_{t\wedge t^{\ast(n)}_{k}\wedge
\widecheck{T}^{\ast (n)}_{\varepsilon}\wedge 
t^{\ast(n)}_{\partial\Z^{\ast}_{n}}
\wedge 2^{-n}\widecheck{\mathcal{Q}}^{\Z_{n}^{\ast}}}
\vert$
is bounded by
\begin{displaymath}
k 2^{-n}(\min_{\Z^{\ast}_{n}}\check{\lambda}^{\ast (n)}_{0}\wedge\varepsilon)^{-1}.
\end{displaymath}
Moreover,
\begin{displaymath}
\sup_{k\in\mathbb{N}, \varepsilon >0}
t^{\ast(n)}_{k}\wedge
\widecheck{T}^{\ast (n)}_{\varepsilon}\wedge 
t^{\ast(n)}_{\partial\Z^{\ast}_{n}}
\wedge 2^{-n}\widecheck{\mathcal{Q}}^{\Z_{n}^{\ast}}
=
t^{\ast(n)}_{\partial\Z^{\ast}_{n}}
\wedge 2^{-n}\widecheck{\mathcal{Q}}^{\Z_{n}^{\ast}}
~~\text{a.s.}
\end{displaymath}
To see that
$(M^{(n)}_{t\wedge t^{\ast(n)}_{k}\wedge
\widecheck{T}^{\ast (n)}_{\varepsilon}\wedge 
t^{\ast(n)}_{\partial\Z^{\ast}_{n}}
\wedge 2^{-n}\widecheck{\mathcal{Q}}^{\Z_{n}^{\ast}}})
_{t\geq 0}$
is a martingale, observe that at time $t$, if
$\widecheck{X}^{\ast (n)}_{t}=x\in \Z^{\ast}_{n}\setminus
\lbrace \min\Z^{\ast}_{n}, \max \Z^{\ast}_{n} \rbrace$,
$\widecheck{X}^{\ast (n)}_{t}$ jumps left with rate
\begin{displaymath}
2^{2n-1}\dfrac{\check{\lambda}_{t}^{\ast(n)}(x-2^{-n})^{\frac{1}{2}}}
{\check{\lambda}_{t}^{\ast(n)}(x)^{\frac{1}{2}}},
\end{displaymath}
and then $M^{(n)}_{t}$ decreases by
\begin{displaymath}
2^{-n}\check{\lambda}_{t}^{\ast(n)}(x-2^{-n})^{-\frac{1}{2}}
\check{\lambda}_{t}^{\ast(n)}(x)^{-\frac{1}{2}},
\end{displaymath}
and $\widecheck{X}^{\ast (n)}_{t}=x\in \Z^{\ast}_{n}$ jumps right with rate
\begin{displaymath}
2^{2n-1}\dfrac{\check{\lambda}_{t}^{\ast(n)}(x+2^{-n})^{\frac{1}{2}}}
{\check{\lambda}_{t}^{\ast(n)}(x)^{\frac{1}{2}}},
\end{displaymath}
and then $M^{(n)}_{t}$ increases by
\begin{displaymath}
2^{-n}\check{\lambda}_{t}^{\ast(n)}(x+2^{-n})^{-\frac{1}{2}}
\check{\lambda}_{t}^{\ast(n)}(x)^{-\frac{1}{2}},
\end{displaymath}
so the average variation of $M^{(n)}_{t}$ is $0$.
\end{proof}

Next we will apply a time-change which will make 
$(M^{(n)}_{t\wedge 
t^{\ast(n)}_{\partial\Z^{\ast}_{n}}
\wedge 2^{-n}\widecheck{\mathcal{Q}}^{\Z_{n}^{\ast}}})
_{t\geq 0}$
into a martingale with normalized variance. Let be
\begin{displaymath}
U^{(n)}(t)=\int_{0}^{t}\dfrac{1}{2}
\check{\lambda}_{s}^{\ast(n)}(\widecheck{X}^{\ast (n)}_{s})^{-\frac{3}{2}}
\Big(
\check{\lambda}_{s}^{\ast(n)}(\widecheck{X}^{\ast (n)}_{s}-2^{-n})
^{-\frac{1}{2}}+
\check{\lambda}_{s}^{\ast(n)}(\widecheck{X}^{\ast (n)}_{s}+2^{-n})
^{-\frac{1}{2}}
\Big) ds.
\end{displaymath}
Let 
$\widecheck{\mathcal{U}}^{(n)}=
U^{(n)}(t^{\ast(n)}_{\partial\Z^{\ast}_{n}}
\wedge 2^{-n}\widecheck{\mathcal{Q}}^{\Z_{n}^{\ast}})$.
By considering the rate of jumps and the size of jumps of
$M^{(n)}_{t}$, we immediately get the following:

\begin{lemma}
\label{LemClaim2bis}
The process
$((M^{(n)}_{t\wedge 
t^{\ast(n)}_{\partial\Z^{\ast}_{n}}
\wedge 2^{-n}\widecheck{\mathcal{Q}}^{\Z_{n}^{\ast}}})^{2}
-U^{(n)}(t)\wedge\widecheck{\mathcal{U}}^{(n)})
_{t\geq 0}$ is a local martingale in the filtration of
$(\phi^{(a)},\widecheck{X}^{\ast (n)}
_{t\wedge 2^{-n}\widecheck{\mathcal{Q}}^{\Z_{n}^{\ast}}},
t^{\ast(n)}_{\partial\Z^{\ast}_{n}}
\1_{t^{\ast(n)}_{\partial\Z^{\ast}_{n}}\leq t}
)$.
\end{lemma}

Let be
\begin{displaymath}
Z^{(n)}_{u}=M^{(n)}_{(U^{(n)})^{-1}(u)}.
\end{displaymath}

\bigskip

\begin{lemma}
\label{LemClaim3}

$(Z^{(n)}_{u\wedge \widecheck{\mathcal{U}}^{(n)}})_{u\geq 0}$ 
is a martingale
in the filtration of
$(\phi^{(a)},Z^{(n)}_{u}, 
\widecheck{\mathcal{U}}^{(n)}
\1_{\widecheck{\mathcal{U}}^{(n)}\leq u}
)$. Moreover, for any
$0\leq u_{1}<u_{2}$,
\begin{multline}
\label{EqVarMart}
\E[(Z^{(n)}_{u_{2}\wedge \widecheck{\mathcal{U}}^{(n)}}
-Z^{(n)}_{u_{1}\wedge \widecheck{\mathcal{U}}^{(n)}})^{2}
\vert \phi^{(a)}, (Z^{(n)}_{u})_{0\leq u\leq u_{1}},
\widecheck{\mathcal{U}}^{(n)}\1_{\widecheck{\mathcal{U}}^{(n)}\leq u_{1}}]=\\
\E[u_{2}\wedge 
\widecheck{\mathcal{U}}^{(n)}
-u_{1}\wedge \widecheck{\mathcal{U}}^{(n)}
\vert \phi^{(a)}, (Z^{(n)}_{u})_{0\leq u\leq u_{1}},
\widecheck{\mathcal{U}}^{(n)}\1_{\widecheck{\mathcal{U}}^{(n)}\leq u_{1}}],
\end{multline}
or equivalently,
$((Z^{(n)}_{u\wedge \widecheck{\mathcal{U}}^{(n)}})^{2}
-u\wedge\widecheck{\mathcal{U}}^{(n)})_{u\geq 0}$ 
is a martingale in the filtration of
$(\phi^{(a)},Z^{(n)}_{u}, 
\widecheck{\mathcal{U}}^{(n)}
\1_{\widecheck{\mathcal{U}}^{(n)}\leq u}
)$.
\end{lemma}

\begin{proof}
First not that, since
$(M^{(n)}_{t\wedge t^{\ast(n)}_{k} \wedge
\widecheck{T}^{\ast (n)}_{\varepsilon}\wedge 
t^{\ast(n)}_{\partial\Z^{\ast}_{n}}
\wedge 2^{-n}\widecheck{\mathcal{Q}}^{\Z_{n}^{\ast}}})
_{t\geq 0}$ is a bounded martingale, so is
$(Z^{(n)}_{u\wedge 
U^{(n)}(t^{\ast(n)}_{k}\wedge
\widecheck{T}^{\ast (n)}_{\varepsilon})
\wedge\widecheck{\mathcal{U}}^{(n)}})_{u\geq 0}$. Moreover,
with the sizes of jumps and the jump rates, one sees that
$dU^{(n)}_{t}$ is the average squared variation of
$M^{(n)}_{t}$ during $dt$. 
So after the time change, for
$Z^{(n)}_{u}$,
\begin{multline*}
\E[(Z^{(n)}_{u_{2}
\wedge 
U^{(n)}(t^{\ast(n)}_{k}\wedge
\widecheck{T}^{\ast (n)}_{\varepsilon})
\wedge \widecheck{\mathcal{U}}^{(n)}}
-Z^{(n)}_{u_{1}
\wedge U^{(n)}(t^{\ast(n)}_{k}\wedge
\widecheck{T}^{\ast (n)}_{\varepsilon})
\wedge \widecheck{\mathcal{U}}^{(n)}})^{2}
\vert \phi^{(a)}, (Z^{(n)}_{u})_{0\leq u\leq u_{1}},
\widecheck{\mathcal{U}}^{(n)}\1_{\widecheck{\mathcal{U}}^{(n)}\leq u_{1}}]=\\
\E[u_{2}\wedge U^{(n)}(t^{\ast(n)}_{k}\wedge
\widecheck{T}^{\ast (n)}_{\varepsilon})
\wedge 
\widecheck{\mathcal{U}}^{(n)}
-u_{1}\wedge U^{(n)}(t^{\ast(n)}_{k}\wedge
\widecheck{T}^{\ast (n)}_{\varepsilon})
\wedge \widecheck{\mathcal{U}}^{(n)}
\vert \phi^{(a)}, (Z^{(n)}_{u})_{0\leq u\leq u_{1}},
\widecheck{\mathcal{U}}^{(n)}\1_{\widecheck{\mathcal{U}}^{(n)}\leq u_{1}}].
\end{multline*}
For a fixed $u\geq 0$,
$(Z^{(n)}_{u\wedge 
U^{(n)}(t^{\ast(n)}_{k}\wedge
\widecheck{T}^{\ast (n)}_{k^{-1}})
\wedge\widecheck{\mathcal{U}}^{(n)}})_{k\geq 1}$
is a martingale parametrized by $k\in\N^{\ast}$. It converges a.s.
to $Z^{(n)}_{u\wedge\widecheck{\mathcal{U}}^{(n)}}$ and is bounded in
$\mathbb{L}^{2}$, so the convergence is also in
$\mathbb{L}^{2}$. It follows that
$(Z^{(n)}_{u\wedge \widecheck{\mathcal{U}}^{(n)}})_{u\geq 0}$ is a martingale and \eqref{EqVarMart}.
\end{proof}

For $\varepsilon>0$ and $n\in\N^{\ast}$, 
we consider $\widecheck{T}^{\ast (n)}_{\varepsilon}$ the time defined
by \eqref{EqTneps}. Let be
$(\widetilde{Z}^{(n,\varepsilon)}_{u})_{u\geq 0}$ the process, which
up to time 
$U^{(n)}(\widecheck{T}^{\ast (n)}_{\varepsilon})\wedge
\widecheck{\mathcal{U}}^{(n)}$
coincides with $Z^{(n)}_{u}$, and after that time continues as a standard Brownian motion starting from
$Z^{(n)}_{U^{(n)}(\widecheck{T}^{\ast (n)}_{\varepsilon})\wedge
\widecheck{\mathcal{U}}^{(n)}}$, conditional of that value independent of everything else.

\begin{lemma}
\label{LemClaim4} 

As $n\to +\infty$, the pair
$(\phi^{(a)},\widetilde{Z}^{(n,\varepsilon)}_{u})_{u\geq 0}$ converges in law, for the uniform convergence on compact subsets, to 
$(\phi^{(a)},B_{u})_{u\geq 0}$, where $(B_{u})_{u\geq 0}$ is
a standard Brownian motion starting from $0$, 
independent of $\phi^{(a)}$. 
\end{lemma}

\begin{proof}
The convergence of 
$(\widetilde{Z}^{(n,\varepsilon)}_{u})_{u\geq 0}$ to
$(B_{u})_{u\geq 0}$ follows from Theorem 1.4,
Section 7.1 in \cite{EthierKurtz1986Markov}. 
To apply it, we use the following:
\begin{itemize}
\item $(\widetilde{Z}^{(n,\varepsilon)}_{u})_{u\geq 0}$ is a martingale.
\item $((\widetilde{Z}^{(n,\varepsilon)}_{u})^{2}-u)_{u\geq 0}$ 
is a martingale by Lemma \ref{LemClaim3}.
\item The jumps of $(\widetilde{Z}^{(n,\varepsilon)}_{u})_{u}$ are bounded by $2^{-n}(\min_{\Z^{\ast}_{n}}\check{\lambda}^{\ast (n)}_{0}\wedge\varepsilon)^{-1}$, and in particular
\begin{displaymath}
\lim_{n\to +\infty}
\E\Big[\max_{u\geq 0}(\widetilde{Z}^{(n,\varepsilon)}_{u}-
\widetilde{Z}^{(n,\varepsilon)}_{u^{-}})^{2}\Big]=0.
\end{displaymath}
\end{itemize}
The independence of $(B_{u})_{u\geq 0}$ from
$\phi^{(a)}$ follows from the fact that the above listed three conditions hold after conditioning by $\phi^{(a)}$.
\end{proof}

We stress that in Lemma \ref{LemClaim4} we neither require
$(B_{u})_{u\geq 0}$ to be defined on the same probability space as the
$\widecheck{X}^{\ast (n)}_{t}$ and
$(\phi^{(0)}(x)^{2},\beta_{t},
\phi^{(a)}(x)^{2})_{x\in\R, 0\leq t\leq \tau^{\beta}_{a^{2}/2}}$,
nor the convergence to be in probability.

Let be, for $t\in[0,\widecheck{T}^{\beta,a}]$,
\begin{displaymath}
U(t)=\int_{0}^{t}
(\phi^{(a)}(\beta_{\tau^{\beta}_{a^{2}/2}-s})^{2}-
2\ell^{\beta}_{\tau^{\beta}_{a^{2}/2}}
(\beta_{\tau^{\beta}_{a^{2}/2}-s})
+2\ell^{\beta}_{\tau^{\beta}_{a^{2}/2}-s}
(\beta_{\tau^{\beta}_{a^{2}/2}-s}))^{-2} ds,
\end{displaymath}
and
\begin{displaymath}
\widecheck{T}^{\beta,a}_{\varepsilon}=
\sup\lbrace t\geq 0\vert
\phi^{(a)}(\beta_{\tau^{\beta}_{a^{2}/2}-s})^{2}-
2\ell^{\beta}_{\tau^{\beta}_{a^{2}/2}}
(\beta_{\tau^{\beta}_{a^{2}/2}-s})
+2\ell^{\beta}_{\tau^{\beta}_{a^{2}/2}-s}
(\beta_{\tau^{\beta}_{a^{2}/2}-s})>\varepsilon
\rbrace.
\end{displaymath}
Clearly, we have the following:

\begin{lemma}
\label{LemClaim5}

For all $\varepsilon>0$, a.s., 
$\widecheck{T}^{\ast (n)}_{\varepsilon}$ converges to
$\widecheck{T}^{\beta,a}_{\varepsilon}$,
$U^{(n)}(t)\wedge U^{(n)}(\widecheck{T}^{\ast (n)}_{\varepsilon})
\wedge \widecheck{\mathcal{U}}^{(n)}$ converges to
$U(t)\wedge U(\widecheck{T}^{\beta,a}_{\varepsilon})$
uniformly on $[0,+\infty)$, and
$(U^{(n)})^{-1}(u)\wedge 
\widecheck{T}^{\ast (n)}_{\varepsilon}\wedge 
t^{\ast(n)}_{\partial\Z^{\ast}_{n}}
\wedge 2^{-n}\widecheck{\mathcal{Q}}^{\Z_{n}^{\ast}}$
converges to
$U^{-1}(u)\wedge \widecheck{T}^{\beta,a}_{\varepsilon}$
uniformly on $[0,+\infty)$.
\end{lemma}

Next, for $u\in [0,\widecheck{\mathcal{U}}^{(n)})$, we define
\begin{displaymath}
\widecheck{\Psi}^{(n)}_{u}(y)= 
\widecheck{S}_{(U^{(n)})^{-1}(u)}^{\ast (n)}\circ
(\widecheck{S}_{0}^{\ast (n)})^{-1}(y),~~y\in\R.
\end{displaymath}
By simple computation, we have the following:

\begin{lemma}
\label{LemClaim6}

For $u\in [0,\widecheck{\mathcal{U}}^{(n)})$
such that $Z^{(n)}_{u}=Z^{(n)}_{u^{-}}$, we have the following expressions and bounds for $\dfrac{\partial}{\partial u}\widecheck{\Psi}^{(n)}_{u}(y)$: 
\begin{itemize}
\item if $\widecheck{\Psi}^{(n)}_{u}(y)=Z^{(n)}_{u}$,
$\dfrac{\partial}{\partial u}\widecheck{\Psi}^{(n)}_{u}(y)=0$;
\item if $\widecheck{\Psi}^{(n)}_{u}(y)\in (Z^{(n)}_{u},
\widecheck{S}^{\ast (n)}_{0}
(\widecheck{X}^{\ast (n)}_{(U^{(n)})^{-1}(u)}+2^{-n}))$,
$$0<\dfrac{\partial}{\partial u}\widecheck{\Psi}^{(n)}_{u}(y)<
\dfrac{
2\check{\lambda}^{\ast(n)}_{(U^{(n)})^{-1}(u)}
(\widecheck{X}^{\ast (n)}_{(U^{(n)})^{-1}(u)}+2^{-n})^{-\frac{1}{2}}}
{\check{\lambda}_{(U^{(n)})^{-1}(u)}^{\ast(n)}
(\widecheck{X}^{\ast (n)}_{(U^{(n)})^{-1}(u)}-2^{-n})
^{-\frac{1}{2}}+
\check{\lambda}_{(U^{(n)})^{-1}(u)}^{\ast(n)}
(\widecheck{X}^{\ast (n)}_{(U^{(n)})^{-1}(u)}+2^{-n})
^{-\frac{1}{2}}}
;$$
\item if $\widecheck{\Psi}^{(n)}_{u}(y)\geq
\widecheck{S}^{\ast (n)}_{0}
(\widecheck{X}^{\ast (n)}_{(U^{(n)})^{-1}(u)}+2^{-n})$,
$$\dfrac{\partial}{\partial u}\widecheck{\Psi}^{(n)}_{u}(y)=
\dfrac{
2\check{\lambda}^{\ast(n)}_{(U^{(n)})^{-1}(u)}
(\widecheck{X}^{\ast (n)}_{(U^{(n)})^{-1}(u)}+2^{-n})^{-\frac{1}{2}}}
{\check{\lambda}_{(U^{(n)})^{-1}(u)}^{\ast(n)}
(\widecheck{X}^{\ast (n)}_{(U^{(n)})^{-1}(u)}-2^{-n})
^{-\frac{1}{2}}+
\check{\lambda}_{(U^{(n)})^{-1}(u)}^{\ast(n)}
(\widecheck{X}^{\ast (n)}_{(U^{(n)})^{-1}(u)}+2^{-n})
^{-\frac{1}{2}}}
;$$
\item if
$\widecheck{\Psi}^{(n)}_{u}(y)\in (\widecheck{S}^{\ast (n)}_{0}
(\widecheck{X}^{\ast (n)}_{(U^{(n)})^{-1}(u)}-2^{-n}),
Z^{(n)}_{u})$,
$$0>\dfrac{\partial}{\partial u}\widecheck{\Psi}^{(n)}_{u}(y)>
\dfrac{-
2\check{\lambda}^{\ast(n)}_{(U^{(n)})^{-1}(u)}
(\widecheck{X}^{\ast (n)}_{(U^{(n)})^{-1}(u)}-2^{-n})^{-\frac{1}{2}}}
{\check{\lambda}_{(U^{(n)})^{-1}(u)}^{\ast(n)}
(\widecheck{X}^{\ast (n)}_{(U^{(n)})^{-1}(u)}-2^{-n})
^{-\frac{1}{2}}+
\check{\lambda}_{(U^{(n)})^{-1}(u)}^{\ast(n)}
(\widecheck{X}^{\ast (n)}_{(U^{(n)})^{-1}(u)}+2^{-n})
^{-\frac{1}{2}}}
;$$
\item if
$\widecheck{\Psi}^{(n)}_{u}(y)\leq\widecheck{S}^{\ast (n)}_{0}
(\widecheck{X}^{\ast (n)}_{(U^{(n)})^{-1}(u)}-2^{-n})$,
$$\dfrac{\partial}{\partial u}\widecheck{\Psi}^{(n)}_{u}(y)=
\dfrac{-
2\check{\lambda}^{\ast(n)}_{(U^{(n)})^{-1}(u)}
(\widecheck{X}^{\ast (n)}_{(U^{(n)})^{-1}(u)}-2^{-n})^{-\frac{1}{2}}}
{\check{\lambda}_{(U^{(n)})^{-1}(u)}^{\ast(n)}
(\widecheck{X}^{\ast (n)}_{(U^{(n)})^{-1}(u)}-2^{-n})
^{-\frac{1}{2}}+
\check{\lambda}_{(U^{(n)})^{-1}(u)}^{\ast(n)}
(\widecheck{X}^{\ast (n)}_{(U^{(n)})^{-1}(u)}+2^{-n})
^{-\frac{1}{2}}}.$$
\end{itemize}
\end{lemma}

\bigskip

For $\varepsilon>0$, let
$\widetilde{\Psi}^{(n,\varepsilon)}_{u}(y)$ be defined as follows.
For $u\in [0,U^{(n)}(\widecheck{T}^{\ast (n)}_{\varepsilon})\wedge
\widecheck{\mathcal{U}}^{(n)}]$,
$\widetilde{\Psi}^{(n,\varepsilon)}_{u}(y)=
\widecheck{\Psi}^{(n)}_{u}(y)$. For
$u>U^{(n)}(\widecheck{T}^{\ast (n)}_{\varepsilon})\wedge
\widecheck{\mathcal{U}}^{(n)}$, 
$\widetilde{\Psi}^{(n,\varepsilon)}_{u}(y)$ is a divergent
Bass-Burdzy flow driven by
$\widetilde{Z}^{(n,\varepsilon)}_{u}$ (which is then a Brownian motion)
satisfying
\begin{displaymath}
\widetilde{\Psi}^{(n,\varepsilon)}_{u}(y)-
\widetilde{\Psi}^{(n,\varepsilon)}_{U^{(n)}
(\widecheck{T}^{\ast (n)}_{\varepsilon})\wedge
\widecheck{\mathcal{U}}^{(n)}}(y)=
\int_{U^{(n)}
(\widecheck{T}^{\ast (n)}_{\varepsilon})\wedge
\widecheck{\mathcal{U}}^{(n)}}^{u}
(\1_{\widetilde{\Psi}^{(n,\varepsilon)}_{v}(y)>
\widetilde{Z}^{(n,\varepsilon)}_{v}}-
\1_{\widetilde{\Psi}^{(n,\varepsilon)}_{v}(y)<
\widetilde{Z}^{(n,\varepsilon)}_{v}}) dv.
\end{displaymath}

\begin{lemma}
\label{LemClaim7}

For all $\varepsilon>0$, as $n\to +\infty$, the family
\begin{equation}
\label{EqDiscrBMBB}
(\phi^{(a)}(x), \widetilde{Z}^{(n,\varepsilon)}_{u},
\widetilde{\Psi}^{(n,\varepsilon)}_{u}(y),
(\widetilde{\Psi}^{(n,\varepsilon)}_{u})^{-1}(y)
)_{x\in\R, y\in\R, u\geq 0}
\end{equation}
converges in law to, for the topology of uniform convergence on compact subsets, to
\begin{displaymath}
(\phi^{(a)}(x), B_{u},
\widecheck{\Psi}_{u}(y),
(\widecheck{\Psi}_{u})^{-1}(y)
)_{x\in\R, y\in\R, u\geq 0},
\end{displaymath}
where $(B_{u})_{u\geq 0}$ is a standard Brownian motion 
starting from $0$, independent of
$\phi^{(a)}$, $(\widecheck{\Psi}_{u})_{u\geq 0}$ is the divergent Bass-Burdzy flow driven by
$(B_{u})_{u\geq 0}$, and $((\widecheck{\Psi}_{u})^{-1})_{u\geq 0}$ the inverse flow. 
\end{lemma}

\begin{proof}
For this, first we will show the tightness of the family. For the tightness of
$(\widetilde{\Psi}^{(n,\varepsilon)}_{u}(y))_{y\in\R, u\geq 0}$, we use that,
for $u\leq U^{(n)}(\widecheck{T}^{\ast (n)}_{\varepsilon})
\wedge\widecheck{\mathcal{U}}^{(n)}$,
\begin{multline*}
\widetilde{\Psi}^{(n,\varepsilon)}_{u}(y)=
(\widecheck{S}_{(U^{(n)})^{-1}(u)}^{\ast (n)}\circ
(\widecheck{S}_{0}^{\ast (n)})^{-1}(y)-\widecheck{S}_{(U^{(n)})^{-1}(u)}^{\ast (n)}(0))
\\-(\widecheck{S}_{(U^{(n)})^{-1}(u)}^{\ast (n)}(\widecheck{X}^{(n)\ast}_{(U^{(n)})^{-1}(u)})-\widecheck{S}_{(U^{(n)})^{-1}(u)}^{\ast (n)}(0))
+\widetilde{Z}^{(n,\varepsilon)}_{u},
\end{multline*}
each term having a limit in law by Lemmas
\ref{LemClaim1}, \ref{LemClaim4} and \ref{LemClaim5}, 
and that after time
$U^{(n)}(\widecheck{T}^{\ast (n)}_{\varepsilon})
\wedge\widecheck{\mathcal{U}}^{(n)}$, $\widetilde{\Psi}^{(n,\varepsilon)}_{u}$ is already a Bass-Burdzy flow. Similarly for
$((\widetilde{\Psi}^{(n,\varepsilon)}_{u})^{-1}(y))_{y\in\R, u\geq 0}$.
Further, because of the identities and bounds of 
Lemma \ref{LemClaim6}, any subsequential limit of \eqref{EqDiscrBMBB} is of form
\begin{displaymath}
(\phi^{(a)}(x), B_{u},
\bar{\Psi}_{u}(y),
(\bar{\Psi}_{u})^{-1}(y)
)_{x\in\R, y\in\R, u\geq 0},
\end{displaymath}
where $(B_{u})_{u\geq 0}$ is a standard Brownian motion 
starting from $0$, 
independent of $\phi^{(a)}$, and 
\begin{equation}
\label{EqLim1}
\bar{\Psi}_{u}(y)=\int_{0}^{u}(\1_{\bar{\Psi}_{v}(y)>B_{v}}-
\1_{\bar{\Psi}_{v}(y)<B_{v}}) dv,
\end{equation}
and thus by the uniquennes proved in \cite{BassBurdzy99StochBiff}, Theorem 2.3, 
$(\bar{\Psi}_{u})_{u\geq 0}$ is the divergent Bass Burdzy flow driven by 
$(B_{u})_{u\geq 0}$. To get
\eqref{EqLim1}, we used that
\begin{multline*}
\dfrac{\check{\lambda}^{\ast (n)}
_{t\wedge \widecheck{T}^{\ast (n)}_{\varepsilon}}
(x+2^{-n})}
{\check{\lambda}^{\ast (n)}
_{t\wedge \widecheck{T}^{\ast (n)}_{\varepsilon}}(x-2^{-n})}=\\
\dfrac
{\phi^{(a)}(x+2^{-n})^{2}
-2\ell^{\beta}_{\tau^{\beta}_{a^{2}/2}}(x+2^{-n})
+2\ell^{\beta}
_{(Q^{\Z_{n},\beta})^{-1}(Q^{\Z_{n},\beta}(\tau^{\beta}_{a^{2}/2})
-2^{n}t\wedge \widecheck{T}^{\ast (n))}}(x+2^{-n})}
{
\phi^{(a)}(x-2^{-n})^{2}
-2\ell^{\beta}_{\tau^{\beta}_{a^{2}/2}}(x-2^{-n})
+2\ell^{\beta}
_{(Q^{\Z_{n},\beta})^{-1}(Q^{\Z_{n},\beta}(\tau^{\beta}_{a^{2}/2})
-2^{n}t\wedge \widecheck{T}^{\ast (n))}}(x-2^{-n})
}
\end{multline*}
a.s. converges to $1$ as $n\to +\infty$, uniformly in $t$ and uniformly for $x$ in compact subsets of $I(\phi^{(a)})$.
\end{proof}

We are now ready to finish the proof of the Proposition \ref{PropDiscrBB}. By construction,
\begin{displaymath}
\widecheck{X}^{\ast(n)}_{(U^{(n)})^{-1}(u)
\wedge \widecheck{T}^{\ast (n)}_{\varepsilon}
\wedge t^{\ast(n)}_{\partial\Z^{\ast}_{n}}
\wedge 2^{-n}\widecheck{\mathcal{Q}}^{\Z_{n}^{\ast}}}=
(\widecheck{S}_{0}^{\ast (n)})^{-1}\circ
(\widetilde{\Psi}^{(n,\varepsilon)}_{u
\wedge U^{(n)}(\widecheck{T}^{\ast (n)}_{\varepsilon})
\wedge\widecheck{\mathcal{U}}^{(n)}})^{-1}
(\widetilde{Z}^{(n,\varepsilon)}_{u
\wedge U^{(n)}(\widecheck{T}^{\ast (n)}_{\varepsilon})
\wedge\widecheck{\mathcal{U}}^{(n)}}).
\end{displaymath}
We have that the process
$((\widecheck{S}_{0}^{\ast (n)})^{-1}\circ
(\widetilde{\Psi}^{(n,\varepsilon)}_{u})^{-1}
(\widetilde{Z}^{(n,\varepsilon)}_{u}))_{u\geq 0}$ converges in law
to the process
$((\widecheck{S}_{0}^{\ast})^{-1}\circ
(\widecheck{\Psi}_{u})^{-1}(B_{u}))_{u\geq 0}$,
which appears in Definition \ref{DefMain}, and out of which one constructs $\widecheck{X}^{\ast (n)}_{t}$ by the change of time
\begin{displaymath}
U^{\ast}(t)=\int_{0}^{t}
\check{\lambda}^{\ast}_{s}(\widecheck{X}^{\ast}_{s})^{-2} ds,
~~t\in[0,\widecheck{T}^{\ast}).
\end{displaymath}
We will also denote
\begin{displaymath}
\widecheck{T}^{\ast}_{\varepsilon}=
\sup\lbrace t\geq 0\vert \check{\lambda}^{\ast}_{t}
(\widecheck{X}^{\ast}_{t})>\varepsilon\rbrace.
\end{displaymath}
We use the fact that, as $n\to +\infty$, the joint processes
\begin{multline}
\label{EqEnumHuge}
(
\widecheck{T}^{\ast (n)}_{\varepsilon}
\wedge t^{\ast(n)}_{\partial\Z^{\ast}_{n}}
\wedge 2^{-n}\widecheck{\mathcal{Q}}^{\Z_{n}^{\ast}},
U^{(n)}(\widecheck{T}^{\ast (n)}_{\varepsilon})
\wedge\widecheck{\mathcal{U}}^{(n)},\\
\widecheck{X}^{\ast (n)}_{t\wedge
\widecheck{T}^{\ast (n)}_{\varepsilon}
\wedge t^{\ast(n)}_{\partial\Z^{\ast}_{n}}
\wedge 2^{-n}\widecheck{\mathcal{Q}}^{\Z_{n}^{\ast}}
},
\check{\lambda}^{\ast (n)}_{t\wedge
\widecheck{T}^{\ast (n)}_{\varepsilon}
\wedge t^{\ast(n)}_{\partial\Z^{\ast}_{n}}
\wedge 2^{-n}\widecheck{\mathcal{Q}}^{\Z_{n}^{\ast}}
}(x),\\
U^{(n)}(t)\wedge U^{(n)}(\widecheck{T}^{\ast (n)}_{\varepsilon})
\wedge\widecheck{\mathcal{U}}^{(n)},
(U^{(n)})^{-1}(u)\wedge\widecheck{T}^{\ast (n)}_{\varepsilon}
\wedge t^{\ast(n)}_{\partial\Z^{\ast}_{n}}
\wedge 2^{-n}\widecheck{\mathcal{Q}}^{\Z_{n}^{\ast}})
_{x\in \Z_{n}^{\ast}, t\geq 0, u\geq 0}
\end{multline}
converges a.s. to
\begin{multline*}
(
\widecheck{T}^{\beta, a}_{\varepsilon},
U(\widecheck{T}^{\beta, a}_{\varepsilon}),\\
\beta_{(\tau^{\beta}_{a^{2}/2}-t)
\wedge (\tau^{\beta}_{a^{2}/2}-\widecheck{T}^{\beta, a}_{\varepsilon})},
\phi^{(a)}(x)^{2}
-2\ell^{\beta}_{\tau^{\beta}_{a^{2}/2}}(x)
+2\ell^{\beta}
_{(\tau^{\beta}_{a^{2}/2}-t)
\wedge (\tau^{\beta}_{a^{2}/2}-\widecheck{T}^{\beta, a}_{\varepsilon})}(x),\\
U(t)\wedge U(\widecheck{T}^{\beta,a}_{\varepsilon}),
(U)^{-1}(u)\wedge\widecheck{T}^{\beta, a}_{\varepsilon})
_{x\in I(\phi^{(a)}), t\geq 0, u\geq 0}.
\end{multline*}
If we add to the family \eqref{EqEnumHuge} the processes
$(\phi^{(0)}(x)^{2},\beta_{t},
\phi^{(a)}(x)^{2})_{x\in\R, 0\leq t\leq \tau^{\beta}_{a^{2}/2}}$
and $((\widecheck{S}_{0}^{\ast (n)})^{-1}\circ
(\widetilde{\Psi}^{(n,\varepsilon)}_{u})^{-1}
(\widetilde{Z}^{(n,\varepsilon)}_{u}))_{u\geq 0}$, we get a tight family which has subsequential limits in law as $n\to +\infty$. Because of the constraints satisfied for finite $n$, any subsequential limit in law will satisfy:
\begin{itemize}
\item $\widecheck{T}^{\ast}_{\varepsilon}
=\widecheck{T}^{\beta, a}_{\varepsilon}$,
\item $\widecheck{X}^{\ast}_{t}=\beta_{(\tau^{\beta}_{a^{2}/2}-t)}$
for $t\leq \widecheck{T}^{\ast}_{\varepsilon}$,
\item $\ell^{\beta}_{\tau^{\beta}_{a^{2}/2}}(x)-
\ell^{\beta}_{\tau^{\beta}_{a^{2}/2}-t}(x)$ is the local time process
of $\widecheck{X}^{\ast}_{t}$  for 
$t\leq \widecheck{T}^{\ast}_{\varepsilon}$.
\end{itemize}
So we get the equality in law between
\begin{displaymath}
(\widecheck{X}^{\ast}_{t},\phi^{(a)}(x)^{2})
_{x\in\R,0\leq t\leq \widecheck{T}^{\ast}_{\varepsilon}}
\end{displaymath}
and
\begin{displaymath}
(\beta_{(\tau^{\beta}_{a^{2}/2}-t)},\phi^{(a)}(x)^{2})
_{x\in\R,0\leq t\leq \widecheck{T}^{\beta, a}_{\varepsilon}}.
\end{displaymath}
Taking $\varepsilon\to 0$, we get the equality in law between
\begin{displaymath}
(\widecheck{X}^{\ast}_{t},\phi^{(a)}(x)^{2})
_{x\in\R,0\leq t\leq \widecheck{T}^{\ast}}
\end{displaymath}
and
\begin{displaymath}
(\beta_{(\tau^{\beta}_{a^{2}/2}-t)},\phi^{(a)}(x)^{2})
_{x\in\R,0\leq t\leq \widecheck{T}^{\beta, a}}.
\end{displaymath}
This finishes our proof.

Note that \textit{a posteriori}, once the above identity in law established, one can show that the Brownian motion
$(B_{u})_{u\geq 0}$ driving the self repelling diffusion
$(\widecheck{X}^{\ast}_{t})_{x\in\R,0\leq t\leq \widecheck{T}^{\ast}}$
can be constructed on the same probability space as
$(\phi^{(0)}(x)^{2},\beta_{t},
\phi^{(a)}(x)^{2})_{x\in\R, 0\leq t\leq \tau^{\beta}_{a^{2}/2}}$, and
the convergence of
$(Z^{(n)}_{u\wedge U^{(n)}(\widecheck{T}^{\ast (n)}_{\varepsilon})
\wedge\widecheck{\mathcal{U}}^{(n)}})_{u\geq 0}$ to
$(B_{u\wedge U^{\ast}(\widecheck{T}^{\ast}_{\varepsilon})})_{u\geq 0}$
can be upgraded from in law as in Lemma \ref{LemClaim4} to almost sure. However, in our proof we avoid using that \textit{a priori}, and only rely on the convergence in law.
\end{proof}

Combining Theorem \ref{ThmRKInv} and 
Proposition \ref{PropElementary} (1) one immediately gets the following:

\begin{cor}
\label{CorBMRep}
Let be the triple 
\begin{displaymath}
(\phi^{(0)}(x)^{2},\beta_{t},
\phi^{(a)}(x)^{2})_{x\in\R, 0\leq t\leq \tau^{\beta}_{a^{2}/2}},
\end{displaymath}
jointly distributed as in the Ray-Knight 
coupling (Definition \ref{Def RK coupl})
and let $I(\phi^{(a)})$ be the connected component of $0$ in 
$\lbrace x\in\R\vert \phi^{(a)}(x)>0\rbrace$.
Let $I$ be another, deterministic, subinterval of $\R$
and $\check{\lambda}_{0}$ an admissible initial occupation profile on $I$. Let
$(\widecheck{X}_{t},\check{\lambda}_{t}(x))
_{x\in I, 0\leq t\leq\widecheck{T}}$ be the 
self-repelling diffusion on $I$ with initial occupation profile
$\check{\lambda}_{0}$, starting from $x_{0}\in I$.
Let
\begin{displaymath}
\widecheck{S}_{0}^{\ast}(x)=
\int_{0}^{x}\phi^{(a)}(r)^{-2} dr,
x\in I(\phi^{(a)}),
\qquad
\widecheck{S}_{0}(x)=
\int_{x_{0}}^{x}\check{\lambda}_{t}(r)^{-1} dr,
x\in I.
\end{displaymath}
Let $t\mapsto \theta(t)$ be the change of time
\begin{displaymath}
d\theta(t)=
\check{\lambda}_{0}(
\widecheck{S}_{0}^{-1}\circ\widecheck{S}_{0}^{\ast}
(\beta_{\tau^{\beta}_{a^{2}/2}-t}))^{2}
\phi^{(a)}(\beta_{\tau^{\beta}_{a^{2}/2}-t})^{-4}
dt.
\end{displaymath}
Then then process
$$(\widecheck{S}_{0}^{-1}\circ\widecheck{S}_{0}^{\ast}
(\beta_{\tau^{\beta}_{a^{2}/2}-\theta^{-1}(t)}),
((\phi^{(a)})^{2}
-2\ell^{\beta}_{\tau^{\beta}_{a^{2}/2}}
+2\ell^{\beta}_{\tau^{\beta}_{a^{2}/2}-\theta^{-1}(t)})
((\widecheck{S}_{0}^{\ast})^{-1}\circ\widecheck{S}_{0}(x)))
_{x\in I, 0\leq t\leq \theta(\widecheck{T}^{\beta}_{a})}$$
has the same law as 
$(\widecheck{X}_{t},\check{\lambda}_{t}(x))
_{x\in I, 0\leq t\leq \widecheck{T}}$.
\end{cor}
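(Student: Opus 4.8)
The plan is to obtain the statement by feeding the explicit identity in law of Theorem \ref{ThmRKInv} into the deterministic path transformation of Proposition \ref{PropElementary} (1). No new probabilistic input is needed; the work is entirely in matching the changes of scale and time on the two sides.

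First I would record the two admissibility facts: $\check{\lambda}^{\ast}_{0}=(\phi^{(a)})^{2}$ is a.s. admissible on $I(\phi^{(a)})$ by the first Lemma of Section \ref{SecBBGFF}, and $\check{\lambda}_{0}$ is admissible by hypothesis, so both self-repelling diffusions are well defined through Definition \ref{DefMain}. Next I would upgrade Theorem \ref{ThmRKInv}, which matches the pair $(\widecheck{X}^{\ast}_{t},\phi^{(a)}(x)^{2})$ with $(\beta_{\tau^{\beta}_{a^{2}/2}-t},\phi^{(a)}(x)^{2})$, to an identity that also carries the occupation profile. This is automatic: $\check{\lambda}^{\ast}_{t}(x)=\phi^{(a)}(x)^{2}-2\check{\ell}^{\ast}_{t}(x)$ with $\check{\ell}^{\ast}_{t}$ the (pathwise, jointly continuous) family of local times of $\widecheck{X}^{\ast}$, while $\ell^{\beta}_{\tau^{\beta}_{a^{2}/2}}(x)-\ell^{\beta}_{\tau^{\beta}_{a^{2}/2}-t}(x)$ is the corresponding family of local times of the time-reversed path $s\mapsto\beta_{\tau^{\beta}_{a^{2}/2}-s}$. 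Hence Theorem \ref{ThmRKInv} gives that
\begin{displaymath}
(\widecheck{X}^{\ast}_{t},\check{\lambda}^{\ast}_{t}(x))_{x\in I(\phi^{(a)}),\,0\le t\le\widecheck{T}^{\ast}}
\end{displaymath}
has the same law as
\begin{displaymath}
\big(\beta_{\tau^{\beta}_{a^{2}/2}-t},\ \phi^{(a)}(x)^{2}-2\ell^{\beta}_{\tau^{\beta}_{a^{2}/2}}(x)+2\ell^{\beta}_{\tau^{\beta}_{a^{2}/2}-t}(x)\big)_{x\in I(\phi^{(a)}),\,0\le t\le\widecheck{T}^{\beta,a}};
\end{displaymath}
alternatively this augmented identity can be read off directly from the bullet list closing the proof of Proposition \ref{PropDiscrBB}.

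Then I would invoke Proposition \ref{PropElementary} (1) with its $I,\check{\lambda}_{0},x_{0},\widecheck{S}_{0}$ specialized to $I(\phi^{(a)}),(\phi^{(a)})^{2},0,\widecheck{S}_{0}^{\ast}$ and its $I^{\bullet},\check{\lambda}^{\bullet}_{0},x^{\bullet}_{0},\widecheck{S}^{\bullet}_{0}$ to the deterministic data $I,\check{\lambda}_{0},x_{0},\widecheck{S}_{0}$. Under these substitutions the time change $\theta^{\bullet}$ of Proposition \ref{PropElementary} (1) becomes exactly the $\theta$ of the present statement, using $\check{\lambda}^{\ast}_{0}=(\phi^{(a)})^{2}$ so that $\check{\lambda}^{\ast}_{0}(\widecheck{X}^{\ast}_{t})^{-2}=\phi^{(a)}(\widecheck{X}^{\ast}_{t})^{-4}$, and the transformed process
\begin{displaymath}
\big(\widecheck{S}_{0}^{-1}\circ\widecheck{S}_{0}^{\ast}(\widecheck{X}^{\ast}_{\theta^{-1}(t)}),\ \check{\lambda}^{\ast}_{\theta^{-1}(t)}((\widecheck{S}_{0}^{\ast})^{-1}\circ\widecheck{S}_{0}(x))\big)_{x\in I,\,0\le t\le\theta(\widecheck{T}^{\ast})}
\end{displaymath}
has the law of $(\widecheck{X}_{t},\check{\lambda}_{t}(x))_{x\in I,\,0\le t\le\widecheck{T}}$. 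Because this transformation is a fixed measurable functional of the pair $(\widecheck{X}^{\ast}_{\cdot},\check{\lambda}^{\ast}_{\cdot})$ together with the deterministic data, I would substitute the equal-in-law pair of the previous step, replacing $\widecheck{X}^{\ast}_{t}$ by $\beta_{\tau^{\beta}_{a^{2}/2}-t}$, $\check{\lambda}^{\ast}_{t}(x)$ by $\phi^{(a)}(x)^{2}-2\ell^{\beta}_{\tau^{\beta}_{a^{2}/2}}(x)+2\ell^{\beta}_{\tau^{\beta}_{a^{2}/2}-t}(x)$, and $\widecheck{T}^{\ast}$ by $\widecheck{T}^{\beta,a}$; this is precisely the claimed equality in law.

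The main thing to get right, rather than pure substitution, is the direction of the compositions: the trajectory is pushed forward by $\widecheck{S}_{0}^{-1}\circ\widecheck{S}_{0}^{\ast}$ whereas the spatial occupation profile is sampled at $(\widecheck{S}_{0}^{\ast})^{-1}\circ\widecheck{S}_{0}(x)$, i.e. at the inverse map, and one must check that this is consistent with Proposition \ref{PropElementary} (1) under the chosen specialization. One should also note that the time change $\theta$ and its inverse are a.s. finite and strictly increasing on the relevant intervals, which follows from admissibility of $\check{\lambda}_{0}$ and strict positivity of $\phi^{(a)}$ on $I(\phi^{(a)})$. Beyond this bookkeeping I do not expect any obstacle.
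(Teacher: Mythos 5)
Your proposal is correct and matches the paper's own route: the corollary is obtained there exactly by combining Theorem \ref{ThmRKInv} (carrying along the occupation profiles, which are determined pathwise by the local times) with the deterministic scale/time transformation of Proposition \ref{PropElementary} (1), specialized so that the starred GFF-squared profile is the source and $(I,\check{\lambda}_{0},x_{0})$ is the target. Your bookkeeping of the composition directions and of the time change $\theta$ is consistent with that specialization, so no gap remains.
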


\begin{rem}
Note that the process 
$(\beta_{\tau^{\beta}_{a^{2}/2}-t})
_{0\leq t\leq \tau^{\beta}_{a^{2}/2}}$ has the same law as
$(\beta_{t})_{0\leq t\leq \tau^{\beta}_{a^{2}/2}}$, 
so the two processes can be interchanged in Theorem \ref{ThmRKInv}, 
Corollary \ref{PropLST1} and Corollary \ref{CorBMRep}.
\end{rem}

\section{Convergence for general initial occupation profile}
\label{SecBBgeneral}

In the sequel
$I$, $\widecheck{X}_{t}$, $\check{\lambda}_{t}$ 
will denote the general setting,
$\widecheck{X}^{\ast}_{t}$ and
$\check{\lambda}^{\ast}_{t}$ being reserved for the case
$\check{\lambda}^{\ast}_{0}(x)=\phi^{(a)}(x)^{2}$.
Next we show that a discrete space nearest neighbor self-repelling jump process as in Corollary \ref{PropLST1}, but with general initial occupation profile, can be embedded into a continuous self-repelling diffusion.

\begin{prop}
\label{PropEmb2}
Let $\J$ be a finite subset of $\R$ containing $0$. Let
$\check{\lambda}_{0}^{\J}$ be a positive function on $\J$. Let be
$(\widecheck{X}^{\J}_{q},\check{\lambda}_{0}^{\J}(x))
_{x\in\J, 0\leq q\leq\widecheck{\mathcal{Q}}^{\J}}$,the nearest neighbor self-repelling jump process on $\J$ introduced previously
(\eqref{EqDSRP1}, \eqref{EqDSRP2}, \eqref{EqDSRP3}), 
starting from $0$.
Let $q_{\partial\J}$ the first time $q$ when 
$\widecheck{X}^{\J}_{q}$ reaches $\min\J$ or $\max\J$.

Let $\varphi=\varphi^{\check{\lambda}_{0}^{\J}}$ be the Gaussian free field $\phi^{(a)}$, with $a=\check{\lambda}_{0}^{\J}(0)^{\frac{1}{2}}$, conditioned on $\phi^{(a)}$ being positive on
$[\min\J,\max\J]$, and on $\phi^{(a)}(x)=\check{\lambda}_{0}^{\J}(x)^{\frac{1}{2}}$ for all $x\in\J$. In other words, $\varphi/\sqrt{2}$ is obtained by interpolating between values 
$\check{\lambda}_{0}^{\J}(x)^{\frac{1}{2}}/\sqrt{2}$
for consecutive points $x\in \J$ with independent Brownian bridges conditioned on staying positive, and by adding below $\min\J$ and above
$\max\J$ two independent Brownian motions, the first one time-reversed, starting from
$\check{\lambda}_{0}^{\J}(\min\J)^{\frac{1}{2}}/\sqrt{2}$
and from
$\check{\lambda}_{0}^{\J}(\max\J)^{\frac{1}{2}}/\sqrt{2}$
respectively.

Let $I(\varphi)$ be the connected component of $0$ in the non-zero set of $\varphi$.
Let be 
$(\widecheck{X}^{\varphi}_{t},\check{\lambda}^{\varphi}_{t}(x))
_{x\in I(\varphi),0\leq t\leq \widecheck{T}^{\varphi}}$ be,
conditional on $\varphi$, the self-repelling diffusion on
$I(\varphi)$, starting from $0$, with initial occupation profile
$\check{\lambda}^{\varphi}_{0}(x)=\varphi(x)^{2}$, 
$\widecheck{T}^{\varphi}$ being the first time one of the
$\check{\lambda}^{\varphi}_{t}(x)$ reaches $0$.
Let $t_{\partial\J}^{\varphi}$ be the first time $t$ when 
$\widecheck{X}^{\varphi}_{t}$ reaches $\min\J$ or $\max\J$.

Let be
\begin{displaymath}
Q^{\J,\varphi}(t)=\sum_{x\in\J}\check{\ell}^{\varphi}_{t}(x),
\end{displaymath}
where 
$\check{\ell}^{\varphi}_{t}(x)=
(\check{\lambda}^{\varphi}_{0}(x)-\check{\lambda}^{\varphi}_{t}(x))/2$
is the local time process of $\widecheck{X}^{\varphi}_{t}$. 
Denote $(Q^{\J,\varphi})^{-1}$ the
right-continuous inverse of $Q^{\J,\varphi}$. Then the process
\begin{equation}
\label{EqP1}
(\widecheck{X}^{\varphi}_{(Q^{\J,\varphi})^{-1}(q)},
\check{\lambda}^{\varphi}_{(Q^{\J,\varphi})^{-1}(q)}(x))
_{x\in\J, 0\leq q\leq 
Q^{\J,\varphi}(\widecheck{T}^{\varphi})\wedge
Q^{\J,\varphi}(t_{\partial\J}^{\varphi})}
\end{equation}
has the same law as
\begin{equation}
\label{EqP2}
(\widecheck{X}^{\J}_{q},\check{\lambda}_{0}^{\J}(x))
_{x\in\J, 0\leq q\leq\widecheck{\mathcal{Q}}^{\J}
\wedge q_{\partial\J}}.
\end{equation}
\end{prop}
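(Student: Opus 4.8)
The plan is to realize both processes \eqref{EqP1} and \eqref{EqP2} as one and the same time-reversal of the Brownian motion $\beta$ of a Ray-Knight triple, traced on the finite network $\J$. I would fix a countable, locally finite, two-sidedly unbounded set $\J^{\bullet}\supseteq\J$, work with the triple $(\phi^{(0)}(x)^{2},\beta_{t},\phi^{(a)}(x)^{2})$ of Theorem \ref{ThmRK} with $a=\check{\lambda}_{0}^{\J}(0)^{1/2}$, and for a locally finite $K\subseteq\R$ write $\mathrm{tr}_{K}(Z)$ for the trace $q\mapsto Z_{(Q^{K,Z})^{-1}(q)}$ of a process $Z$, where $Q^{K,Z}(t)=\sum_{x\in K}\ell^{Z}_{t}(x)$. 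Traces compose ($\mathrm{tr}_{\J}=\mathrm{tr}_{\J}\circ\mathrm{tr}_{\J^{\bullet}}$) and commute with time reversal, and the process in \eqref{EqP1} is precisely $\mathrm{tr}_{\J}(\widecheck{X}^{\varphi})$, stopped as indicated.

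The first step is to remove the conditioning defining $\varphi$. The event $E=\{\phi^{(a)}(x)^{2}=\check{\lambda}_{0}^{\J}(x)\ \text{for all}\ x\in\J\}\cap\{\phi^{(a)}(x)^{2}\ \text{has no zero on}\ [\min\J,\max\J]\}$ is measurable with respect to $(\phi^{(a)})^{2}$, and conditionally on $E$ the field $\phi^{(a)}$ has the law of $\varphi$, $I(\phi^{(a)})=I(\varphi)$, and $(\widecheck{X}^{\ast},(\phi^{(a)})^{2})$ has the conditional law of $(\widecheck{X}^{\varphi},\varphi^{2})$. Since the identity in law of Theorem \ref{ThmRKInv} is between $(\widecheck{X}^{\ast}_{t},\phi^{(a)}(x)^{2})$ and $(\beta_{\tau^{\beta}_{a^{2}/2}-t},\phi^{(a)}(x)^{2})$ and $E$ concerns only the common coordinate $(\phi^{(a)})^{2}$, conditioning it on $E$ gives that $(\widecheck{X}^{\varphi},\varphi^{2})$ has, with the matching of time intervals of Theorem \ref{ThmRKInv}, the same law as the reversed Brownian path and its field conditioned on $E$.

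The second step is to apply $\mathrm{tr}_{\J}$ to this identity. Before the first hitting of $\{\min\J,\max\J\}$, $\mathrm{tr}_{\J}(\beta)=\mathrm{tr}_{\J}(X^{\J^{\bullet}})$ is a nearest-neighbour jump process on $\J$ with conductances $C(x_{1},x_{2})=(2|x_{2}-x_{1}|)^{-1}$ — the effective conductances between consecutive points of $\J$, which telescope from those of $\J^{\bullet}$ — and the whole setup of Section \ref{SecDiscr} carries over verbatim with $\J$ in place of $\J^{\bullet}$ (unboundedness of $\J^{\bullet}$ is used there only to have the process defined for all times, and before reaching $\{\min\J,\max\J\}$ nothing outside $[\min\J,\max\J]$ is felt). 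Hence Proposition \ref{PropLST0} and Corollary \ref{PropLST1}, applied on the network $\J$, identify the time reversal from $Q^{\J,\beta}(\tau^{\beta}_{a^{2}/2})$ of $(\mathrm{tr}_{\J}\beta,\lambda^{\J},\O^{\J})$, run until an edge of $\J$ first closes, with the discrete self-repelling jump process \eqref{EqDSRP1}--\eqref{EqDSRP2} started from $0$ with initial profile $(\phi^{(a)})^{2}|_{\J}$; the first-edge-closing time equals $\widecheck{\mathcal{Q}}^{\J}$ because the rate at which an edge $\{x_{1},x_{2}\}$ adjacent to the current position closes is, in the reversed dynamics, the corresponding term of the integrand of \eqref{EqDSRP3}. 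Using that $\mathrm{tr}_{\J}$ commutes with time reversal and conditioning on $E$ (which turns $(\phi^{(a)})^{2}|_{\J}$ into $\check{\lambda}_{0}^{\J}$ and $I(\phi^{(a)})$ into $I(\varphi)$), the left side of step one traced on $\J$ becomes exactly \eqref{EqP2}; the stopping at $q_{\min\J,\max\J}$ matches $Q^{\J,\varphi}(t^{\varphi}_{\min\J,\max\J})$ and that at $\widecheck{\mathcal{Q}}^{\J}$ matches $Q^{\J,\varphi}(\widecheck{T}^{\varphi})$.

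The main difficulty is this last matching. On the relevant interval $\widecheck{T}^{\varphi}$ cannot occur at a point of $\J$, nor (before $t^{\varphi}_{\min\J,\max\J}$) outside $[\min\J,\max\J]$, by the divergence of $\int_{0}r^{-1/2}(\exp(Kr^{1/2})-1)^{-1}\,dr$ noted after \eqref{EqDSRP3}; so it occurs at an interior point of some gap $(x_{1},x_{2})$ between consecutive points of $\J$, during an excursion of $\widecheck{X}^{\varphi}$ into that gap, and the conditional probability that such an excursion exhausts the continuum profile on $(x_{1},x_{2})$ before returning to $\J$, given the current boundary values $\check{\lambda}^{\varphi}(x_{1}),\check{\lambda}^{\varphi}(x_{2})$, is governed by the spatial Markov property of the field $\phi^{(0)}(x)^{2}+2\ell^{\beta}_{t}(x)$ on $[x_{1},x_{2}]$ and produces precisely the factor $(\exp(|x_{2}-x_{1}|^{-1}\check{\lambda}^{\varphi}(x_{1})^{1/2}\check{\lambda}^{\varphi}(x_{2})^{1/2})-1)^{-1}$ of \eqref{EqDSRP3}. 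Rather than redo this I would invoke Proposition \ref{PropLST0}, whose proof in \cite{LST2017InvRK} contains the computation; the only genuinely new point to check is that the restriction from $\J^{\bullet}$ to $\J$ of the forward Ray-Knight process and of its reversal is licit before $\{\min\J,\max\J\}$ is reached, which rests on the Markov property of the discrete Gaussian free field under restriction and on the series law for the traced Brownian motion.
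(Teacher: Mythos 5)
Your overall route---realizing both \eqref{EqP1} and \eqref{EqP2} as the trace on $\J$ of the time-reversed Brownian motion of a Ray-Knight triple, via Theorem \ref{ThmRKInv} together with Proposition \ref{PropLST0}/Corollary \ref{PropLST1}---is the same combination of ingredients the paper uses, and it does yield the identity in law when the initial profile at the sites of $\J$ is left \emph{random} (distributed as $(\phi^{(a)}(x)^{2})_{x\in\J}$ conditioned on positivity on $[\min\J,\max\J]$, a positive-probability conditioning). The genuine gap is in your first step, and it is exactly the point to which the paper's proof is devoted. Your event $E$ prescribes the exact values $\phi^{(a)}(x)^{2}=\check{\lambda}_{0}^{\J}(x)$ at the finitely many sites of $\J\setminus\lbrace 0\rbrace$, so $E$ has probability zero, and ``conditioning the identity in law of Theorem \ref{ThmRKInv} on $E$'' is not a single legitimate operation: passing an identity in law through a zero-probability conditioning on a common coordinate only gives, via regular conditional distributions, equality of the two conditional laws for \emph{almost every} value of the conditioning vector $(\phi^{(a)}(x)^{2})_{x\in\J\setminus\lbrace 0\rbrace}$. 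The proposition asserts the identity for an \emph{arbitrary fixed} positive function $\check{\lambda}_{0}^{\J}$. To upgrade ``a.e.''\ to ``every'', one must check that both conditional laws admit versions depending continuously on the prescribed values: continuity of the law of the discrete process \eqref{EqP2} in its initial profile (clear from its construction), continuity of the law of the conditioned bridge field $\varphi$, hence of $\check{\lambda}^{\varphi}_{0}$, in the endpoint values, and continuity of the self-repelling diffusion in its initial occupation profile, which is Lemma \ref{LemContinuity}. This continuity/disintegration argument is the entire content of the paper's proof and is absent from your proposal; without it your step one proves the statement only for a.e.\ choice of $\check{\lambda}_{0}^{\J}$, not for the given one.

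A secondary remark: your steps two and three, which re-derive the discrete inversion on the finite network $\J$ (series law for the traced Brownian motion, carrying Section \ref{SecDiscr} over with $\J$ in place of $\J^{\bullet}$, and matching the first-edge-closing time with the clock \eqref{EqDSRP3}), are heavier than needed. Choosing $\J^{\bullet}\supseteq\J$ unbounded with $\J^{\bullet}\cap[\min\J,\max\J]=\J$ and stopping all processes at the first hitting of $\lbrace\min\J,\max\J\rbrace$, Corollary \ref{PropLST1} applies as stated, and on the positivity event the self-repelling jump process on $\J^{\ast}$ stopped at that boundary coincides with the one on $\J$; in particular the identification of $\widecheck{\mathcal{Q}}^{\J}$ with $Q^{\J,\varphi}(\widecheck{T}^{\varphi})$ is already contained in Corollary \ref{PropLST1} combined with Theorem \ref{ThmRKInv} and requires no new excursion computation. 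That part of your plan is sound in spirit; what must be added is the continuity argument above.
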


\begin{proof}
For $(\check{\lambda}^{\J}_{0}(x))_{x\in\J\setminus\lbrace 0\rbrace}$
not fixed, but random, distributed as 
$(\phi^{(a)}(x)^{2})_{x\in\J\setminus\lbrace 0\rbrace}$,
$\phi^{(a)}$ being conditioned on being positive on $[\min\J,\max\J]$,
the identity in law is a direct consequence of
Corollary \ref{PropLST1} and 
Theorem \ref{ThmRKInv}. To conclude that the identity in law disintegrated according the values of 
$(\check{\lambda}^{\J}_{0}(x))_{x\in\J\setminus\lbrace 0\rbrace}$ also holds, it is sufficient to show that both sides of the identity,
\eqref{EqP1} and \eqref{EqP2}, are continuous with respect to
$(\check{\lambda}^{\J}_{0}(x))_{x\in\J\setminus\lbrace 0\rbrace}$. The continuity of the law of 
\eqref{EqP2} with respect to 
$(\check{\lambda}^{\J}_{0}(x))_{x\in\J\setminus\lbrace 0\rbrace}$
is clear from the construction.
As for \eqref{EqP1}, first the law of
$(\varphi(x))_{x\in[\min\J,\max\J]}$, hence the law of
$(\check{\lambda}^{\varphi}_{0}(x))_{x\in[\min\J,\max\J]}$, 
depends continuously on
$(\check{\lambda}^{\J}_{0}(x))_{x\in\J\setminus\lbrace 0\rbrace}$, 
and second, according to Lemma \ref{LemContinuity}, the law of
\eqref{EqP1} depends continuously on 
$(\check{\lambda}^{\varphi}_{0}(x))_{x\in[\min\J,\max\J]}$.
\end{proof}

\begin{proof}[Proof of Theorem \ref{ThmSIConv}]
We will first consider the case of $I$ bounded.
Without loss of generality, we assume that $0\in I$ and
$\widecheck{X}_{0}=0$. 
We also slightly simplify by taking 
$\widecheck{X}_{0}^{(n)}=\widecheck{X}_{0}=0$
for all $n$.
Using the notations of 
Proposition \ref{PropEmb2}, let be
$\J^{(n)}=2^{-n}\Z\cap I$ and
$\varphi^{(n)}$ the conditioned GFF interpolating between
$(\check{\lambda}_{0}(x)^{\frac{1}{2}})_{x\in\J^{(n)}}$.
By Proposition \ref{PropEmb2}, we can take
\begin{equation}
\label{EqInterpol}
\widecheck{X}^{(n)}_{t}=
\widecheck{X}^{\varphi^{(n)}}_{
(Q^{\J^{(n)},\varphi^{(n)}})^{-1}(2^{n} t)
},
\check{\lambda}^{(n)}_{t}(x)=
\widecheck{X}^{\varphi^{(n)}}_{
(Q^{\J^{(n)},\varphi^{(n)}})^{-1}(2^{n} t)
}(x),
\end{equation}
$$
t\leq 2^{-n}Q^{\J^{(n)},\varphi^{(n)}}
(\widecheck{T}^{\varphi^{(n)}}\wedge 
t^{\varphi^{(n)}}_{\partial \J^{(n)}}),
$$
where $t^{\varphi^{(n)}}_{\partial \J^{(n)}}$
is the first time 
$\widecheck{X}^{\varphi^{(n)}}_{t}$ hits
$\min \J^{(n)}$ or $\max\J^{(n)}$.

\medskip

\begin{lemma}
\label{LemC1}

As $n\to +\infty$, $(\varphi^{(n)}(x))_{x\in I}$ converges in probability to 
$(\check{\lambda}_{0}(x)^{\frac{1}{2}})_{x\in I}$
for the topology of uniform converge on compact subsets of $I$.
\end{lemma}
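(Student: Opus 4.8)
The plan is to reduce the statement, using the metrizability of the topology of uniform convergence on compact subsets of $I$, to the following: for every compact $K\subset I$ and every $\eta>0$, $\mathbb{P}\bigl(\sup_{K}|\varphi^{(n)}-\check{\lambda}_{0}^{1/2}|>\eta\bigr)\to 0$ as $n\to+\infty$. Fix such a $K$ and $\eta$. Since $I$ is open, I would choose $r>0$ small enough that $K'=[\inf K-r,\sup K+r]\subset I$, and set $c=\inf_{K'}\check{\lambda}_{0}>0$; this is strictly positive because $\check{\lambda}_{0}$ is continuous and $(0,+\infty)$-valued, and it is essential to localize away from $\partial I$ before taking the infimum, as $\check{\lambda}_{0}$ may degenerate at the boundary of $I$. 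For all $n$ with $2^{-n}$ smaller than $r$ and than $\min(\inf K-\inf I,\ \sup I-\sup K)$, one has $\min\J^{(n)}<\inf K$, $\max\J^{(n)}>\sup K$, and every interval $[x,x+2^{-n}]$ with $x\in\J^{(n)}$ that meets $K$ is contained in $K'$. Recall that on each such interval $\varphi^{(n)}/\sqrt{2}$ is an independent Brownian bridge of duration $2^{-n}$ between the values $\check{\lambda}_{0}(x)^{1/2}/\sqrt{2}$ and $\check{\lambda}_{0}(x+2^{-n})^{1/2}/\sqrt{2}$, conditioned to stay positive.

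First I would treat the unconditioned analogue $\psi^{(n)}$, obtained by the same construction but dropping the positivity conditioning. Then $\psi^{(n)}=L^{(n)}+D^{(n)}$, where $L^{(n)}$ is the piecewise-linear interpolation of $\check{\lambda}_{0}^{1/2}$ through the points of $\J^{(n)}$, and on each interval $[x,x+2^{-n}]$ the function $D^{(n)}/\sqrt{2}$ is an independent Brownian bridge from $0$ to $0$ of duration $2^{-n}$. By uniform continuity of $\check{\lambda}_{0}^{1/2}$ on $K'$ one gets $\sup_{K}|L^{(n)}-\check{\lambda}_{0}^{1/2}|\to 0$. For $D^{(n)}$, the supremum over one interval has the law of $2^{-n/2}$ times the supremum of a standard Brownian bridge, which has a sub-Gaussian tail; a union bound over the $O(2^{n})$ bridges meeting $K$ then yields $\mathbb{P}(\sup_{K}|D^{(n)}|>\eta/2)\to 0$. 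Together with the triangle inequality this gives $\mathbb{P}(\sup_{K}|\psi^{(n)}-\check{\lambda}_{0}^{1/2}|>\eta)\to 0$.

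It remains to remove the conditioning on $K$. The random vector $(\varphi^{(n)}(x))_{x\in K}$ is a deterministic function of the finitely many bridges over intervals meeting $K$ (at most $M_{n}=O(2^{n})$ of them, all contained in $K'$), and its law is the image of the law of $(\psi^{(n)}(x))_{x\in K}$ under conditioning each of those bridges, independently, to stay positive. A Brownian bridge from $a>0$ to $b>0$ over a time interval of length $\delta$ fails to stay positive with probability $e^{-2ab/\delta}$; here $a,b\geq\sqrt{c/2}$ and $\delta=2^{-n}$, so each of these conditioning events has complementary probability at most $e^{-c\,2^{n}}$. Since total variation distance does not increase under pushforward and is subadditive over products,
\begin{displaymath}
d_{\mathrm{TV}}\!\left(\mathrm{Law}\bigl((\varphi^{(n)}(x))_{x\in K}\bigr),\ \mathrm{Law}\bigl((\psi^{(n)}(x))_{x\in K}\bigr)\right)\le M_{n}\,e^{-c\,2^{n}}\longrightarrow 0.
\end{displaymath}
Combining this with the previous paragraph gives $\mathbb{P}(\sup_{K}|\varphi^{(n)}-\check{\lambda}_{0}^{1/2}|>\eta)\to 0$, and since $K$ and $\eta$ were arbitrary, the lemma follows.

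The estimate on $\psi^{(n)}$ (piecewise-linear interpolation plus small independent bridge fluctuations) is routine. The one point requiring care is the last step: the exponential smallness $e^{-c\,2^{n}}$ of each conditioning defect must beat the $O(2^{n})$ bridges in the union/subadditivity bound, and this works precisely because the endpoint values $\check{\lambda}_{0}(\cdot)^{1/2}$ remain bounded below by the \emph{strictly positive} constant $\sqrt{c/2}$ on $K'$ — which is why one first restricts attention to a compact $K'\subset I$ rather than working on all of $I$.
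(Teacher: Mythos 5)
Your proof is correct and follows essentially the same route as the paper's: decompose $\varphi^{(n)}$ into the piecewise-linear interpolation of $\check{\lambda}_{0}^{1/2}$ plus $O(2^{n})$ independent zero-to-zero bridges, beat the $O(2^{n})$ union bound with the sub-Gaussian tail $O(\exp(-k2^{n}\varepsilon^{2}))$, and use that each positivity-conditioning event has probability exponentially close to $1$ on a compact $K'\subset I$ where $\check{\lambda}_{0}\geq c>0$. Your total-variation step merely makes quantitative what the paper asserts in one line (``the minimal probability of an event by which we condition converges to $1$''), so no further comparison is needed.
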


\begin{proof}
Indeed, given $K$ a compact subinterval of $I$ and $n$ large enough so that $K\subseteq [\min\J^{(n)},\max\J^{(n)}]$, one will obtain 
$\varphi^{(n)}$ by first interpolating linearly between the values
of $(\check{\lambda}_{0}(x)^{\frac{1}{2}})_{x\in\J^{(n)}}$, then by adding of order $2^{n}$ independent bridges from $0$ to $0$ of
duration $2^{-n}$, each conditioned by a positivity event. The minimal probability of an event by which we condition will converge to $1$ with $n$. Moreover, for an unconditioned bridge, the probability
to deviate more than $\varepsilon$ from $0$ is
$O(\exp(-k 2^{n} \varepsilon^{2}))$, for a constant $k>0$. 
This beats the $2^{n}$ factor.
\end{proof}

\begin{lemma}
\label{LemC2}

As $n\to +\infty$, the process
$(\widecheck{X}^{\varphi^{(n)}}
_{t\wedge \widecheck{T}^{\varphi^{(n)}}
\wedge t^{\varphi^{(n)}}_{\partial \J^{(n)}}},
\check{\lambda}^{\varphi^{(n)}}
_{t\wedge \widecheck{T}^{\varphi^{(n)}}
\wedge t^{\varphi^{(n)}}_{\partial \J^{(n)}}}
(x))_{x\in I, t\geq 0}$
converges in law to
$(\widecheck{X}_{t\wedge \widecheck{T}},
\check{\lambda}_{t\wedge \widecheck{T}}
(x))_{x\in I, t\geq 0}$.
\end{lemma}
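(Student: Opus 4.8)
The plan is to establish the convergence \emph{almost surely} along subsequences, by coupling all the processes to a single driving Brownian motion and reading everything off the explicit construction of Definition~\ref{DefMain}; this is, in spirit, the argument behind Lemma~\ref{LemContinuity}. Since by Lemma~\ref{LemC1} $\varphi^{(n)}\to\check\lambda_{0}^{1/2}$ in probability, uniformly on compacts of $I$, it suffices to show that every subsequence has a further subsequence along which the claimed convergence in law holds. So I would fix a subsequence, extract by Lemma~\ref{LemC1} a sub-subsequence along which $\varphi^{(n)}\to\check\lambda_{0}^{1/2}$ a.s.\ locally uniformly, and, conditionally on $(\varphi^{(n)})_{n}$, build all the self-repelling diffusions $(\widecheck{X}^{\varphi^{(n)}}_{t},\check\lambda^{\varphi^{(n)}}_{t}(x))$ on $I(\varphi^{(n)})$ together with $(\widecheck{X}_{t},\check\lambda_{t}(x))$ on $I$ with profile $\check\lambda_{0}$ out of one and the same driving Brownian motion $(B_{u})_{u\geq0}$, i.e.\ the same Bass--Burdzy flow $(\widecheck{\Psi}_{u},\check\xi_{u},\widecheck{\Lambda}_{u})_{u\geq0}$, as is legitimate by Definition~\ref{DefMain}. (Admissibility of $\check\lambda^{\varphi^{(n)}}_{0}(x)=\varphi^{(n)}(x)^{2}$ on $I(\varphi^{(n)})$ holds as in the first lemma of Section~\ref{SecBBGFF}, the conditioning being supported on a compact.) It then remains to prove a.s.\ convergence of the stopped processes along this sub-subsequence.

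Next I would show that the changes of scale converge. Writing $\widecheck{S}_{0}^{\varphi^{(n)}}(x)=\int_{0}^{x}\varphi^{(n)}(r)^{-2}\,dr$ and $\widecheck{S}_{0}(x)=\int_{0}^{x}\check\lambda_{0}(r)^{-1}\,dr$, for any compact $K\subset I$ one has $K\subset[\min\J^{(n)},\max\J^{(n)}]\subseteq I(\varphi^{(n)})$ for $n$ large, and since $\check\lambda_{0}$ is bounded below on $K$ so is $\varphi^{(n)}$ for $n$ large, whence $\varphi^{(n)}(\cdot)^{-2}\to\check\lambda_{0}(\cdot)^{-1}$ and $\widecheck{S}_{0}^{\varphi^{(n)}}\to\widecheck{S}_{0}$ uniformly on $K$. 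Since $\widecheck{S}_{0}$ is, by admissibility of $\check\lambda_{0}$, an increasing homeomorphism of $I$ onto $\R$ and the $\widecheck{S}_{0}^{\varphi^{(n)}}$ are increasing and converge to it locally uniformly, the inverses $(\widecheck{S}_{0}^{\varphi^{(n)}})^{-1}$ then converge to $(\widecheck{S}_{0})^{-1}$ uniformly on compacts of $\R$.

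I would then feed this into Definition~\ref{DefMain}. Because $\check\xi_{u}\to y_{\rm bif}$, the range $\{\check\xi_{u}:u\geq0\}$ is a bounded interval $[\check\xi_{-},\check\xi_{+}]$, on which $(\widecheck{S}_{0}^{\varphi^{(n)}})^{-1}\to(\widecheck{S}_{0})^{-1}$ and $\varphi^{(n)}\big((\widecheck{S}_{0}^{\varphi^{(n)}})^{-1}(\cdot)\big)^{2}\to\check\lambda_{0}\big((\widecheck{S}_{0})^{-1}(\cdot)\big)$ uniformly. By dominated convergence the changes of time $t^{(n)}(u)=\int_{0}^{u}\varphi^{(n)}\big((\widecheck{S}_{0}^{\varphi^{(n)}})^{-1}(\check\xi_{v})\big)^{2}(1+2\widecheck{\Lambda}_{v}(\check\xi_{v}))^{-2}\,dv$ then converge to $t(u)$ uniformly on $[0,+\infty)$, with finite total times $\widecheck{T}^{\varphi^{(n)}}=t^{(n)}(+\infty)\to\widecheck{T}=t(+\infty)$. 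Inverting, $u^{(n)}(t)\to u(t)$, and by uniform continuity of $u\mapsto(\widecheck{S}_{0})^{-1}(\check\xi_{u})$ on the bounded range of $\check\xi$, $\widecheck{X}^{\varphi^{(n)}}_{t}=(\widecheck{S}_{0}^{\varphi^{(n)}})^{-1}(\check\xi_{u^{(n)}(t)})\to\widecheck{X}_{t}$ uniformly in $t\in[0,+\infty)$ (with the stopped processes constant after $\widecheck{T}^{\varphi^{(n)}}$, resp.\ $\widecheck{T}$); likewise $\check\lambda^{\varphi^{(n)}}_{t}(x)=\varphi^{(n)}(x)^{2}(1+2\widecheck{\Lambda}_{u^{(n)}(t)}(\widecheck{S}_{0}^{\varphi^{(n)}}(x)))^{-1}\to\check\lambda_{t}(x)$ uniformly in $t$ and uniformly on compacts in $x$, using continuity of $\widecheck{\Lambda}$ in $(u,y)$.

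Finally I would dispose of the boundary stopping, which I expect to be the only genuine obstacle. Over $t\in[0,\widecheck{T}^{\varphi^{(n)}}]$ the process $\widecheck{X}^{\varphi^{(n)}}_{t}$ stays in $[(\widecheck{S}_{0}^{\varphi^{(n)}})^{-1}(\check\xi_{-}),(\widecheck{S}_{0}^{\varphi^{(n)}})^{-1}(\check\xi_{+})]$, which by the second step converges to the compact subinterval $[(\widecheck{S}_{0})^{-1}(\check\xi_{-}),(\widecheck{S}_{0})^{-1}(\check\xi_{+})]$ of $I$; since $\min\J^{(n)}\to\inf I$ and $\max\J^{(n)}\to\sup I$, this range lies in $(\min\J^{(n)},\max\J^{(n)})$ for $n$ large, so $t^{\varphi^{(n)}}_{\min\J^{(n)},\max\J^{(n)}}=+\infty>\widecheck{T}^{\varphi^{(n)}}$ and the process stopped at $\widecheck{T}^{\varphi^{(n)}}\wedge t^{\varphi^{(n)}}_{\min\J^{(n)},\max\J^{(n)}}$ agrees with the one stopped at $\widecheck{T}^{\varphi^{(n)}}$. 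Together with the third step this gives a.s.\ convergence of $(\widecheck{X}^{\varphi^{(n)}}_{t\wedge\widecheck{T}^{\varphi^{(n)}}\wedge t^{\varphi^{(n)}}_{\min\J^{(n)},\max\J^{(n)}}},\check\lambda^{\varphi^{(n)}}_{t\wedge\widecheck{T}^{\varphi^{(n)}}\wedge t^{\varphi^{(n)}}_{\min\J^{(n)},\max\J^{(n)}}}(x))$ to $(\widecheck{X}_{t\wedge\widecheck{T}},\check\lambda_{t\wedge\widecheck{T}}(x))$ along the sub-subsequence, hence convergence in law; as the limit is the same for every subsequence, the whole sequence converges in law. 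The hard part is exactly this control near $\partial I$ — knowing that $\varphi^{(n)}$ stays admissible on $I(\varphi^{(n)})$ and that the diffusion, whose spatial range is dictated by the bounded range of $\check\xi$, does not reach $\min\J^{(n)}$ or $\max\J^{(n)}$ in the limit — together with checking that all the uniform-on-$[0,+\infty)$ convergences, which rest on $\check\xi$ having bounded range and the total times being finite, are legitimate.
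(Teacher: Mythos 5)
Your proposal is correct and follows essentially the same route as the paper: the paper's proof of this lemma is precisely the combination of Lemma \ref{LemC1}, the continuity of the self-repelling diffusion in its initial occupation profile (Lemma \ref{LemContinuity}, itself proved via the common driving Brownian motion of Definition \ref{DefMain}), and the a.s.\ compactness of the range of $(\widecheck{X}_{t\wedge\widecheck{T}})_{t\geq 0}$ to dismiss the boundary stopping. You merely unfold Lemma \ref{LemContinuity} explicitly (same Bass--Burdzy flow, convergence of scale and time changes) and add the standard subsequence step converting the in-probability convergence of $\varphi^{(n)}$ into a.s.\ convergence, which is a legitimate and slightly more detailed rendering of the paper's argument.
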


\begin{proof}
Indeed, by Lemma \ref{LemC1}, 
$(\check{\lambda}^{\varphi^{(n)}}_{0}(x))_{x\in I}$
converges in probability to
$(\check{\lambda}_{0}(x))_{x\in I}$
for the topology of uniform convergence on compact subsets,
the law of the self-repelling diffusion depends continuously on the initial occupation profile
(Lemma \ref{LemContinuity}), and the range of
$(\widecheck{X}_{t\wedge \widecheck{T}})_{t\geq 0}$
is a.s. a compact subinterval of $I$.
\end{proof}

\begin{lemma}
\label{LemC2Bis}

As $n\to +\infty$, simultaneously with the convergence in law of Lemma \ref{LemC2}, we have that
$t\mapsto 2^{-n}Q^{\J^{(n)},\varphi^{(n)}}
(t\wedge \widecheck{T}^{\varphi^{(n)}}
\wedge t^{\varphi^{(n)}}_{\partial \J^{(n)}})
$ converges in 
law to $t\mapsto t\wedge\widecheck{T}$  for the uniform topology.
\end{lemma}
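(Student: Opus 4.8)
Set $r_{n}:=\widecheck{T}^{\varphi^{(n)}}\wedge t^{\varphi^{(n)}}_{\min\J^{(n)},\max\J^{(n)}}$, so that the quantity in the statement is $F_{n}(t):=2^{-n}Q^{\J^{(n)},\varphi^{(n)}}(t\wedge r_{n})$, and write $F(t):=t\wedge\widecheck{T}$. The plan is to realise everything on a single probability space and to recognise $F_{n}$ as a Riemann-sum approximation of $F$. First I would invoke Lemma \ref{LemC2} and the Skorokhod representation theorem to pass to a space on which $(\widecheck{X}^{\varphi^{(n)}}_{t\wedge r_{n}},\check{\lambda}^{\varphi^{(n)}}_{t\wedge r_{n}}(x))\to(\widecheck{X}_{t\wedge\widecheck{T}},\check{\lambda}_{t\wedge\widecheck{T}}(x))$ a.s., uniformly in $t\in[0,+\infty)$ and uniformly for $x$ in compacts of $I$. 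Since, with $\check{\ell}^{\varphi^{(n)}}_{s}(x)=\tfrac12(\check{\lambda}^{\varphi^{(n)}}_{0}(x)-\check{\lambda}^{\varphi^{(n)}}_{s}(x))$ the occupation density of $\widecheck{X}^{\varphi^{(n)}}$, one has $F_{n}(t)=2^{-n}\sum_{x\in\J^{(n)}}\check{\ell}^{\varphi^{(n)}}_{t\wedge r_{n}}(x)=\tfrac12\,2^{-n}\sum_{x\in\J^{(n)}}(\check{\lambda}^{\varphi^{(n)}}_{0}(x)-\check{\lambda}^{\varphi^{(n)}}_{t\wedge r_{n}}(x))$, a deterministic functional of the converging quantities (note $\check{\lambda}^{\varphi^{(n)}}_{0}=\varphi^{(n)}(\cdot)^{2}$ is the $t=0$ slice, cf.\ Lemma \ref{LemC1}), so it suffices to prove a.s.\ uniform convergence $F_{n}\to F$ on $[0,+\infty)$, and simultaneity with Lemma \ref{LemC2} is then automatic. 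From the uniform convergence of the profiles one gets $\check{\ell}^{\varphi^{(n)}}_{t\wedge r_{n}}(x)\to\check{\ell}_{t\wedge\widecheck{T}}(x)$ uniformly on compacts of $I\times[0,+\infty)$; moreover the ranges of $\widecheck{X}^{\varphi^{(n)}}_{\cdot\wedge r_{n}}$ converge to that of $\widecheck{X}_{\cdot\wedge\widecheck{T}}$, which is a.s.\ a compact subinterval of $I$, so a.s.\ there is a fixed compact $K\subset\subset I$ with $K\subset[\min\J^{(n)},\max\J^{(n)}]$ such that, for $n$ large, all the fields $\check{\ell}^{\varphi^{(n)}}_{t\wedge r_{n}}(\cdot)$ are supported in $K$.

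Next I would prove pointwise convergence $F_{n}(t)\to F(t)$ for each fixed $t$. Writing $F_{n}(t)=2^{-n}\sum_{x\in\J^{(n)}\cap K}\check{\ell}^{\varphi^{(n)}}_{t\wedge r_{n}}(x)$ and splitting off $2^{-n}\sum_{x\in\J^{(n)}\cap K}\check{\ell}_{t\wedge\widecheck{T}}(x)$, the error term is at most $2^{-n}\#(\J^{(n)}\cap K)\cdot\sup_{K}|\check{\ell}^{\varphi^{(n)}}_{t\wedge r_{n}}-\check{\ell}_{t\wedge\widecheck{T}}|\to0$, while the main term is a Riemann sum of the continuous, compactly supported function $x\mapsto\check{\ell}_{t\wedge\widecheck{T}}(x)$ and hence converges to $\int_{I}\check{\ell}_{t\wedge\widecheck{T}}(x)\,dx=t\wedge\widecheck{T}$, the last identity being the occupation-density formula for $\widecheck{X}_{\cdot\wedge\widecheck{T}}$ applied to $f\equiv1$. (Continuity of $x\mapsto\check{\ell}_{t\wedge\widecheck{T}}(x)$ follows from continuity of $\check{\lambda}_{0}$, of $\widecheck{S}_{0}$ and of $y\mapsto\widecheck{\Lambda}_{u}(y)$; at $t=\widecheck{T}$ the blow-up of $\widecheck{\Lambda}$ at $y_{\rm bif}$ is harmless since $v\mapsto 1-(1+2v)^{-1}$ is continuous up to $v=+\infty$.)

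Then I would upgrade to uniform convergence. Both $F_{n}$ and $F$ are non-decreasing, and $F$ is continuous and equal to $\widecheck{T}$ on $[\widecheck{T},+\infty)$; by a Dini/Pólya-type argument it is enough to add, to the pointwise convergence, that $F_{n}(+\infty):=\lim_{t\to\infty}F_{n}(t)=F_{n}(r_{n})\to\widecheck{T}$. For this I would show $r_{n}\to\widecheck{T}$ a.s. On the one hand, for $t<\widecheck{T}$ one has $\check{\lambda}_{t}(\widecheck{X}_{t})>0$, so $\check{\lambda}^{\varphi^{(n)}}_{t\wedge r_{n}}(\widecheck{X}^{\varphi^{(n)}}_{t\wedge r_{n}})$ is bounded below for $n$ large while $\widecheck{X}^{\varphi^{(n)}}_{t\wedge r_{n}}$ stays in a compact of $I$; but at $r_{n}=\widecheck{T}^{\varphi^{(n)}}$ the profile at the position vanishes and at $r_{n}=t^{\varphi^{(n)}}_{\min\J^{(n)},\max\J^{(n)}}$ the position sits at $\min\J^{(n)}$ or $\max\J^{(n)}$, which escape $\partial I$ --- so $r_{n}\geq t$ for $n$ large, giving $\liminf_{n}r_{n}\geq\widecheck{T}$. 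On the other hand, if $r_{n}\geq R$ for some $R>\widecheck{T}$ along a subsequence, then $\widecheck{X}^{\varphi^{(n)}}$ is alive and stays in $K\subset[\min\J^{(n)},\max\J^{(n)}]$ on $[0,R]$, so $\int_{K}\check{\ell}^{\varphi^{(n)}}_{R}(x)\,dx=R$; but $\check{\ell}^{\varphi^{(n)}}_{R}\to\check{\ell}_{\widecheck{T}}$ uniformly on $K$ with common support, forcing $R=\int_{K}\check{\ell}^{\varphi^{(n)}}_{R}\to\int_{K}\check{\ell}_{\widecheck{T}}=\widecheck{T}$, a contradiction --- so $\limsup_{n}r_{n}\leq\widecheck{T}$. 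Hence $r_{n}\to\widecheck{T}$, so for fixed $s>\widecheck{T}$ one has $F_{n}(+\infty)=F_{n}(s)\to s\wedge\widecheck{T}=\widecheck{T}$ for $n$ large; a standard three-$\varepsilon$ argument over a fine partition of $[0,\widecheck{T}]$, using monotonicity of $F_{n}$ and matching $F_n$ to $F$ at the partition points and at $+\infty$, then yields $\sup_{t\geq0}|F_{n}(t)-F(t)|\to0$ a.s.

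The step I expect to be the real obstacle is precisely the control of the truncation time $r_{n}$ just described: there is no modulus of continuity for $x\mapsto\check{\ell}^{\varphi^{(n)}}_{t}(x)$ uniform in $n$ (and in $t$ near $\widecheck{T}$), so $F_{n}(t)$ cannot be compared directly with its own integral $t$; the whole argument must be routed through the uniform-on-compacts convergence of the profiles $\check{\lambda}^{\varphi^{(n)}}$ supplied by Lemma \ref{LemC2}, and once one does that, the only delicate point left is ruling out that the stopped clocks $r_{n}$ overshoot $\widecheck{T}$.
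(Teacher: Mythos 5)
Your proposal is correct and is essentially the paper's own argument: realize the convergence of Lemma \ref{LemC2} almost surely on a single probability space, write $2^{-n}Q^{\J^{(n)},\varphi^{(n)}}(t\wedge\cdot)$ as the lattice sum $2^{-n-1}\sum_{x\in\J^{(n)}}(\check{\lambda}^{\varphi^{(n)}}_{0}(x)-\check{\lambda}^{\varphi^{(n)}}_{t\wedge\cdot}(x))$, and split it into the Riemann sum $2^{-n-1}\sum_{x\in\J^{(n)}}(\check{\lambda}_{0}(x)-\check{\lambda}_{t\wedge\widecheck{T}}(x))\to t\wedge\widecheck{T}$ plus an error term controlled by the uniform convergence of the occupation profiles. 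The only divergence is in the uniformity step: the paper bounds the error uniformly in $t$ by $\tfrac{1}{2}(1+\vert I\vert)\max_{x\in\J^{(n)},s\geq 0}\vert\cdots\vert$ and dismisses the boundary time by noting $t^{\varphi^{(n)}}_{\min\J^{(n)},\max\J^{(n)}}>\widecheck{T}^{\varphi^{(n)}}$ with probability tending to $1$, whereas your pointwise-then-monotonicity (P\'olya) upgrade, together with the auxiliary a.s.\ proof that $\widecheck{T}^{\varphi^{(n)}}\wedge t^{\varphi^{(n)}}_{\min\J^{(n)},\max\J^{(n)}}\to\widecheck{T}$, is correct but does more work than needed, since the convergence furnished by Lemma \ref{LemC2} is already uniform in $t\in[0,+\infty)$.
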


\begin{proof}
To simplify, we will assume here that all the
$$(\widecheck{X}^{\varphi^{(n)}}
_{t\wedge \widecheck{T}^{\varphi^{(n)}}
\wedge t^{\varphi^{(n)}}_{\partial \J^{(n)}}},
\check{\lambda}^{\varphi^{(n)}}
_{t\wedge \widecheck{T}^{\varphi^{(n)}}
\wedge t^{\varphi^{(n)}}_{\partial \J^{(n)}}}
(x))_{x\in I, t\geq 0}$$ and  
$(\widecheck{X}_{t\wedge \widecheck{T}},
\check{\lambda}_{t\wedge \widecheck{T}}
(x))_{x\in I, t\geq 0}$ live on the same probability space, constructed from the same driving Brownian motion $(B_{u})_{u\geq 0}$, independent of the $\varphi^{(n)}$. This is always possible to do.
Write
\begin{eqnarray*}
2^{-n}Q^{\J^{(n)},\varphi^{(n)}}(t\wedge 
\widecheck{T}^{\varphi^{(n)}})&=&2^{-n-1}\sum_{x\in\J^{(n)}}
(\check{\lambda}^{\varphi^{(n)}}_{0}(x)-
\check{\lambda}^{\varphi^{(n)}}
_{t\wedge \widecheck{T}^{\varphi^{(n)}}
}(x))
\\&=&
2^{-n-1}\sum_{x\in\J^{(n)}}
(\check{\lambda}^{\varphi^{(n)}}_{0}(x)-
\check{\lambda}_{0}(x)-
\check{\lambda}^{\varphi^{(n)}}
_{t\wedge \widecheck{T}^{\varphi^{(n)}}}(x)
+\check{\lambda}_{t\wedge\widecheck{T}}(x))\\
&&+2^{-n-1}\sum_{x\in\J^{(n)}}
(\check{\lambda}_{0}(x)-\check{\lambda}_{t\wedge\widecheck{T}}(x)).
\end{eqnarray*}
We have that
$$2^{-n-1}\sum_{x\in\J^{(n)}}
(\check{\lambda}_{0}(x)-\check{\lambda}_{t\wedge\widecheck{T}}(x))$$
converges a.s. to $t\wedge \widecheck{T}$, uniformly of 
$[0,+\infty)$. Moreover,
\begin{multline*}
\big\vert
2^{-n-1}\sum_{x\in\J^{(n)}}
(\check{\lambda}^{\varphi^{(n)}}_{0}(x)-
\check{\lambda}_{0}(x)-
\check{\lambda}^{\varphi^{(n)}}
_{t\wedge \widecheck{T}^{\varphi^{(n)}}}(x)
+\check{\lambda}_{t\wedge\widecheck{T}}(x))
\big\vert
\\
\leq
(1+\vert I\vert)
\dfrac{1}{2}\max_{x\in\J^{(n)},s\geq 0}
\vert
\check{\lambda}^{\varphi^{(n)}}_{0}(x)-
\check{\lambda}_{0}(x)-
\check{\lambda}^{\varphi^{(n)}}
_{s\wedge \widecheck{T}^{\varphi^{(n)}}}(x)
+\check{\lambda}_{s\wedge\widecheck{T}}(x)
\vert,
\end{multline*}
$\vert I\vert$ being the length of $I$,
and the right-hand side converges in probability to $0$.
Finally,
$t^{\varphi^{(n)}}_{\partial \J^{(n)}}>
\widecheck{T}^{\varphi^{(n)}}$
with probability converging to $1$.
\end{proof}

\begin{lemma}
\label{LemC3} 

As $n\to +\infty$, the process
$$(\widecheck{X}^{(n)}
_{t\wedge 
2^{-n}Q^{\J^{(n)},\varphi^{(n)}}
(\widecheck{T}^{\varphi^{(n)}}
\wedge t^{\varphi^{(n)}}_{\partial \J^{(n)}})
},
\check{\lambda}^{(n)}
_{t\wedge 
2^{-n}Q^{\J^{(n)},\varphi^{(n)}}
(\widecheck{T}^{\varphi^{(n)}}
\wedge t^{\varphi^{(n)}}_{\partial \J^{(n)}})
}
(x))_{x\in \J^{(n)}, t\geq 0}$$
converges in law to
$(\widecheck{X}_{t\wedge \widecheck{T}},
\check{\lambda}_{t\wedge \widecheck{T}}
(x))_{x\in I, t\geq 0}$.
\end{lemma}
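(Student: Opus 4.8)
The plan is to obtain Lemma \ref{LemC3} from Lemmas \ref{LemC2} and \ref{LemC2Bis} by a random time change. Recall that by construction (see \eqref{EqInterpol}) $\widecheck{X}^{(n)}_{q}=\widecheck{X}^{\varphi^{(n)}}_{(Q^{\J^{(n)},\varphi^{(n)}})^{-1}(2^{n}q)}$ and, for $x\in\J^{(n)}$, $\check{\lambda}^{(n)}_{q}(x)=\check{\lambda}^{\varphi^{(n)}}_{(Q^{\J^{(n)},\varphi^{(n)}})^{-1}(2^{n}q)}(x)$ (with $\check{\lambda}^{(n)}_{q}$ linearly interpolated off $\J^{(n)}$): thus $\widecheck{X}^{(n)}$ is nothing but the diffusion $\widecheck{X}^{\varphi^{(n)}}$ read through the inverse of the local-time clock $t\mapsto 2^{-n}Q^{\J^{(n)},\varphi^{(n)}}(t)$, and by Lemma \ref{LemC2Bis} this clock converges uniformly, jointly with $\widecheck{X}^{\varphi^{(n)}}$ and $\check{\lambda}^{\varphi^{(n)}}$, to the identity time change $t\mapsto t\wedge\widecheck{T}$. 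I would first pass, via Skorokhod's representation theorem (along subsequences, which is enough for a statement of convergence in law), to a probability space on which the convergences of Lemmas \ref{LemC1}, \ref{LemC2}, \ref{LemC2Bis} hold almost surely and on which, exactly as in the proof of Lemma \ref{LemContinuity}, all the diffusions $\widecheck{X}^{\varphi^{(n)}}$ and the limit $\widecheck{X}$ are built through Definition \ref{DefMain} from one and the same divergent Bass--Burdzy flow, independent of the random profiles $\varphi^{(n)}$. Then the terminal times are the explicit functionals of Definition \ref{DefMain}, and since $\varphi^{(n)}\to\check{\lambda}_{0}^{1/2}$ uniformly on compacts (Lemma \ref{LemC1}), dominated convergence — using that $\check{\xi}_{u}$ has compact range — gives $\widecheck{T}^{\varphi^{(n)}}\to\widecheck{T}$ a.s.; together with the last assertion of Lemma \ref{LemC2Bis} (that $t^{\varphi^{(n)}}_{\min\J^{(n)},\max\J^{(n)}}>\widecheck{T}^{\varphi^{(n)}}$ eventually) this yields $R_{n}:=\widecheck{T}^{\varphi^{(n)}}\wedge t^{\varphi^{(n)}}_{\min\J^{(n)},\max\J^{(n)}}\to\widecheck{T}$ and hence $\widecheck{T}^{(n)}_{\ast}:=2^{-n}Q^{\J^{(n)},\varphi^{(n)}}(R_{n})\to\widecheck{T}$, a.s.

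The core of the argument is then elementary. For $q\in[0,\widecheck{T}^{(n)}_{\ast}]$ set $r_{n}(q)=(Q^{\J^{(n)},\varphi^{(n)}})^{-1}(2^{n}q)$; one checks that $r_{n}(q)\in[0,R_{n}]$ and, $Q^{\J^{(n)},\varphi^{(n)}}$ being continuous, that $\widecheck{X}^{(n)}_{q}=\widecheck{X}^{\varphi^{(n)}}_{r_{n}(q)}$, that $\check{\lambda}^{(n)}_{q}(x)=\check{\lambda}^{\varphi^{(n)}}_{r_{n}(q)}(x)$ for $x\in\J^{(n)}$, and that $2^{-n}Q^{\J^{(n)},\varphi^{(n)}}(r_{n}(q))=q$. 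Taking $q=t\wedge\widecheck{T}^{(n)}_{\ast}$ and combining the last identity with the uniform convergence of the clock (Lemma \ref{LemC2Bis}) and with $\widecheck{T}^{(n)}_{\ast}\to\widecheck{T}$, one gets $r_{n}(t\wedge\widecheck{T}^{(n)}_{\ast})\wedge\widecheck{T}\to t\wedge\widecheck{T}$ uniformly for $t$ in compact sets. Composing this converging time change with the uniform convergence of $\widecheck{X}^{\varphi^{(n)}}$ and $\check{\lambda}^{\varphi^{(n)}}$ from Lemma \ref{LemC2}, and using the uniform continuity of the continuous limits $\widecheck{X}$ and $\check{\lambda}$ on the compact $[0,\widecheck{T}]$ (together with the uniform continuity of $\check{\lambda}_{t\wedge\widecheck{T}}$ in $x$ on compacts, to absorb the linear interpolation of $\check{\lambda}^{(n)}$ off $\J^{(n)}$), one obtains $\widecheck{X}^{(n)}_{t\wedge\widecheck{T}^{(n)}_{\ast}}\to\widecheck{X}_{t\wedge\widecheck{T}}$ and $\check{\lambda}^{(n)}_{t\wedge\widecheck{T}^{(n)}_{\ast}}(x)\to\check{\lambda}_{t\wedge\widecheck{T}}(x)$ almost surely, uniformly on compact subsets of $I\times[0,+\infty)$. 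Reverting from almost sure convergence along subsequences to convergence in law of the whole sequence then finishes the proof.

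The step I expect to be the main obstacle is the bookkeeping at the endpoint $t=\widecheck{T}$. There the inverse clock $r_{n}$ has a jump over the last excursion of $\widecheck{X}^{\varphi^{(n)}}$ away from $\J^{(n)}$; during that excursion $\widecheck{X}^{\varphi^{(n)}}$ stays within a single interval between consecutive points of $\J^{(n)}$, so $\widecheck{X}^{(n)}$ at its terminal time differs from $\widecheck{X}^{\varphi^{(n)}}_{\widecheck{T}^{\varphi^{(n)}}}$ by at most $2^{-n}$, and, the local time at every point of $\J^{(n)}$ being frozen along such an excursion, $\check{\lambda}^{(n)}$ at that time agrees with $\check{\lambda}^{\varphi^{(n)}}_{\widecheck{T}^{\varphi^{(n)}}}$ on $\J^{(n)}$. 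This $2^{-n}$ — the only place where the mesh enters the present argument — tends to $0$ and so does not affect the conclusion, but, jointly with the convergence $R_{n}\to\widecheck{T}$ of the lifetimes, it is what forces the careful matching of times above. Everything else is a routine consequence of Lemmas \ref{LemC1}, \ref{LemC2}, \ref{LemC2Bis}, \ref{LemContinuity} and Definition \ref{DefMain}.
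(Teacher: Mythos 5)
Your proposal is correct and follows essentially the same route as the paper: the paper's own proof of Lemma \ref{LemC3} simply invokes the embedding \eqref{EqInterpol} together with Lemmas \ref{LemC2} and \ref{LemC2Bis}, which is exactly the time-change composition you carry out. The extra details you supply (Skorokhod representation, construction from a common driving Brownian motion as in the proof of Lemma \ref{LemC2Bis}, and the endpoint bookkeeping with the $2^{-n}$ discrepancy) are the routine steps the paper leaves implicit, and they are handled correctly.
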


\begin{proof}
This follows from 
\eqref{EqInterpol}, Lemma \ref{LemC2} and the convergence of 
$$t\mapsto 2^{-n}Q^{\J^{(n)},\varphi^{(n)}}
(t\wedge \widecheck{T}^{\varphi^{(n)}}
\wedge t^{\varphi^{(n)}}_{\partial \J^{(n)}})
$$ in 
law to $t\mapsto t\wedge\widecheck{T}$ (Lemma \ref{LemC2Bis}).
\end{proof}

To finish the proof of Theorem \ref{ThmSIConv}, 
observe that by Lemma \ref{LemC3}, 
$$\check{\lambda}^{(n)}_{
2^{-n}Q^{\J^{(n)},\varphi^{(n)}}
(\widecheck{T}^{\varphi^{(n)}}
\wedge t^{\varphi^{(n)}}_{\partial \J^{(n)}})
}
(\widecheck{X}^{(n)}_{
2^{-n}Q^{\J^{(n)},\varphi^{(n)}}
(\widecheck{T}^{\varphi^{(n)}}
\wedge t^{\varphi^{(n)}}_{\partial \J^{(n)}})
})$$
converges in probability to
$\check{\lambda}_{\widecheck{T}}(\widecheck{X}_{\widecheck{T}})=0$,
thus $\widecheck{T}^{(n)}_{\varepsilon}<
2^{-n}Q^{\J^{(n)},\varphi^{(n)}}
(\widecheck{T}^{\varphi^{(n)}}
\wedge t^{\varphi^{(n)}}_{\partial \J^{(n)}})$ with probability converging to 1.

Finally, if $I$ is unbounded, it is enough to consider an increasing family of bounded subintervals of $I$ which at the limit gives $I$, as the range of
$\widecheck{X}_{t\wedge \widecheck{T}_{\varepsilon}}$ is a.s. bounded.
\end{proof}

\section*{Acknowledgements}

This work was supported by the French National Research Agency (ANR) grant
within the project MALIN (ANR-16-CE93-0003).

This work was partly supported by the LABEX MILYON (ANR-10-LABX-0070) of Université de Lyon, within the program "Investissements d'Avenir" (ANR-11-IDEX-0007) operated by the French National Research Agency (ANR).

TL acknowledges the support of Dr. Max Rössler, the Walter Haefner
Foundation and the ETH Zurich Foundation.

PT acknowledges the support of the National Science Foundation of China (NSFC), grant No. 11771293.

\bibliographystyle{alpha}
\bibliography{titusbibnew}

\end{document}